

\documentclass{amsart}
\usepackage{array}
\newcolumntype{P}[1]{>{\raggedright\let\newline\\\arraybackslash\hspace{0pt}}m{#1}}
\usepackage{graphicx,verbatim, amsmath, amssymb, amsthm, amsfonts, epsfig, amsxtra,ifthen,mathtools,epstopdf,caption,enumerate,hyperref,hhline,bbm,capt-of,longtable}	
\DeclareFontFamily{U}{mathx}{\hyphenchar\font45}
\DeclareFontShape{U}{mathx}{m}{n}{<-> mathx10}{}
\DeclareSymbolFont{mathx}{U}{mathx}{m}{n}
\DeclareMathAccent{\widebar}{0}{mathx}{"73}
\epstopdfsetup{suffix=}
\DeclareGraphicsExtensions{.ps}
\DeclareGraphicsRule{.ps}{pdf}{.pdf}{`ps2pdf -dEPSCrop -dNOSAFER #1 \noexpand\OutputFile}

\newtheorem{proposition}{Proposition}[section]
\newtheorem{theorem}[proposition]{Theorem}

\newtheorem{lemma}[proposition]{Lemma}
\newtheorem{prop}[proposition]{Proposition}
\newtheorem{cor}[proposition]{Corollary}
\newtheorem{thm}[proposition]{Theorem}
\newtheorem{conj}[proposition]{Conjecture}

\theoremstyle{definition}

\newtheorem{definition}[proposition]{Definition}

\theoremstyle{remark}
\newtheorem{remark}[proposition]{Remark}

\numberwithin{equation}{section}
\usepackage[usenames]{color}


\usepackage{color}

\newcommand{\margincolor}{red}      
\definecolor{darkgreen}{rgb}{0,0.7,0}

\addtolength{\marginparwidth}{8mm}

\newcounter{margincounter}
\setcounter{margincounter}{0}

\newcommand{\marginnum}{
\ifnum\value{margincounter}<10
\textcolor{\margincolor}{\begin{picture}(0,0)\put(2.2,2.4){\circle{9}}\end{picture}\footnotesize\arabic{margincounter}}
\else\ifnum\value{margincounter}<100
\textcolor{\margincolor}{\begin{picture}(0,0)\put(4.256,2.5){\circle{11}}\end{picture}\footnotesize\arabic{margincounter}}
\else
\textcolor{\margincolor}{\begin{picture}(0,0)\put(6.8,2.5){\circle{14}}\end{picture}\footnotesize\arabic{margincounter}}
\fi\fi
}




\newcommand{\newword}[1]{\textbf{\emph{#1}}}

\newcommand{\integers}{\mathbb Z}
\newcommand{\rationals}{\mathbb Q}

\newcommand{\reals}{\mathbb R}

\newcommand{\ep}{\varepsilon}
\newcommand{\thet}{\vartheta}

\newcommand{\sgn}{\operatorname{sgn}}

\newcommand{\rank}{\operatorname{rank}}

\newcommand{\Ram}{{\operatorname{Ram}}}
\newcommand{\uf}{{\operatorname{uf}}}
\newcommand{\fr}{{\operatorname{fr}}}

\newcommand{\set}[1]{{\left\lbrace #1 \right\rbrace}}

\newcommand{\br}[1]{{\langle #1 \rangle}}

\newcommand{\E}{{\mathcal E}}

\newcommand{\F}{{\mathcal F}}
\newcommand{\D}{{\mathfrak D}}

\newcommand{\p}{{\mathfrak p}}

\renewcommand{\th}{^\text{th}}

\newcommand{\relint}{\mathrm{relint}}

\DeclareMathOperator{\Span}{Span}

\newcommand{\g}{\mathbf{g}}
\newcommand{\s}{\mathbf{s}}
\newcommand{\m}{\mathbf{m}}

\renewcommand{\b}{\mathbf{b}}
\renewcommand{\k}{\mathbbm{k}}
\newcommand{\kk}{\mathbf{k}}
\newcommand{\ks}{\mathbf{k}}

\newcommand{\e}{\mathbf{e}}

\newcommand{\tB}{\tilde{B}}

\newcommand{\C}{\mathcal{C}}
\newcommand{\B}{\mathcal{B}}

\newcommand{\R}{\mathcal{R}}

\renewcommand{\H}{\mathcal{H}}

\newcommand{\Supp}{\operatorname{Supp}}

\newcommand{\Clear}{\operatorname{Clear}}

\newcommand{\Hom}{\operatorname{Hom}}
\newcommand{\Scat}{\operatorname{Scat}}
\newcommand{\Fan}{\operatorname{Fan}}
\newcommand{\ScatFan}{\operatorname{ScatFan}}
\newcommand{\ChamberFan}{\operatorname{ChamberFan}}

\newcommand{\can}{\operatorname{can}}

\renewcommand{\d}{{\mathfrak d}}
\newcommand{\seg}[1]{\overline{#1}}


\allowdisplaybreaks


\title{Scattering fans}
\author{Nathan Reading}
\address{Department of Mathematics, North Carolina State University}
\email{reading@math.ncsu.edu}
\thanks{Partially supported by the National Science Foundation under Grant Number DMS-1500949.}
\subjclass[2010]{13F60, 14N35, 52C99}

\begin{document}

\begin{abstract}
Scattering diagrams arose in the context of mirror symmetry, Donaldson-Thomas theory, and integrable systems.
We show that a consistent scattering diagram with minimal support cuts the ambient space into a complete fan.
A special class of scattering diagrams, the cluster scattering diagrams, are closely related to cluster algebras.
We show that the cluster scattering fan associated to an exchange matrix $B$ refines the mutation fan for~$B$ (a complete fan that encodes the geometry of mutations of~$B$).
We conjecture that, when $B$ is $n\times n$ for $n>2$, these two fans coincide if and only if $B$ is of finite mutation type.
\end{abstract}
\maketitle


\setcounter{tocdepth}{1}
\tableofcontents

\section{Introduction}\label{intro sec}  
Scattering diagrams (also known as wall-crossing structures) are combinatorial/discrete geometric objects that arise in mirror symmetry, Donaldson-Thomas theory, and integrable systems.
History and background is available in \cite{GHKK,KS}.
In this paper, we review the construction of scattering diagrams from \cite{GHKK}, with essentially the same conventions, except that we ignore some unnecessary dimensions, as explained in Remark~\ref{useless dimensions}.
Our first main result (Theorem~\ref{scat fan}) is that a consistent scattering diagram cuts the ambient vector space into a complete fan.
The definition of a fan here is the usual notion, except that it allows for infinitely many cones.
This broader definition allows stranger behaviors than occur for finite fans.
See Section~\ref{scatfan sec} and particularly Remark~\ref{crazy fan}.

Cluster scattering diagram \cite{GHKK} are certain special scattering diagrams that are deeply connected to cluster algebras.
Our second main result (Theorem~\ref{scat ref mut}) is that the cluster scattering fan for an exchange matrix $B$ refines the mutation fan for $B$.
The mutation fan for $B$ is a complete fan that encodes the piecewise linear geometry of matrix mutations of $B$, in the sense of \cite[Definition~4.2]{ca1}.
Theorem~\ref{scat ref mut} follows from a sort of universal property (Proposition~\ref{direct construct FB}) of the mutation fan:
Roughly, any family of fans that ``mutates in the right way'' refines the mutation fans.
We recast a result of \cite{GHKK} to show that cluster scattering fans mutate in the right way, and conclude that cluster scattering fans refine mutation fans.
We conjecture that these two fans coincide if and only if either $B$ is $2\times2$ and of finite or affine type or $B$ is $n\times n$ for $n>2$ and of finite mutation type (Conjecture~\ref{scat eq mut}).

We conclude by observing (Theorem~\ref{clus mon pop}) that the cluster monomials---and in particular the cluster variables---can be expressed simply in terms of path-ordered products in the cluster scattering fan.
This observation is a crucial ingredient in~\cite{scatcomb}.
Although it is an easy consequence of results of \cite[Section~3]{GHKK}, it has apparently not been stated before in full generality, but a version for $2\times2$ exchange matrices is mentioned in \cite[Remark~3.10]{CGMMRSW}.

\section{Scattering diagrams}\label{scat sec}
In this section, we quote the definition of a scattering diagram and quote and prove some basic facts about scattering diagrams.
We work with the general construction of scattering diagrams from \cite{GHKK}, as opposed to the more special construction of \emph{cluster} scattering diagrams, which make their appearance in Section~\ref{clus scat sec}.

In defining scattering diagrams, we follow \cite{GHK,GHKK}, with a few modifications.
A scattering diagram depends on the initial data and related definitions presented in Table~\ref{init data}, below.
In \cite{GHK,GHKK}, the data are divided into two types: fixed data and seed data.
For us, that distinction is less important.
(Indeed, later in Section~\ref{mut sec} we take a rather different point of view on what is fixed and what is not.)
We include in the table some definitions that will not be used here but that might be useful for comparing with \cite{GHKK}.

\renewcommand*{\arraystretch}{1.29}
\setlength{\doublerulesep}{0pt}
\begin{longtable}{P{88.5pt}|P{248pt}}
\caption{Initial data and preliminary definitions for scattering diagrams}\label{init data}\\
Notation&Description/requirements\\\hline\hline\hline
\endfirsthead
\caption{(continued)}\\
Notation&Description/requirements\\\hline\hline\hline
\endhead
$N$& finite-dimensional lattice\\\hline
$M=\Hom(N,\integers)$&dual lattice to $N$\\\hline
$N_\uf\subseteq N$&saturated sublattice of $N$\\\hline
$V$&$N_\uf\otimes\reals$, real ambient vector space of $N_\uf$ \\\hline
$I$&an index set with $|I|=\rank N$\\\hline
$I_\uf\subseteq I$&$|I_\uf|=\rank N_\uf$\\\hline
$I_\fr\subseteq I$&$I_\fr=I\setminus I_\uf$\\\hline
$\set{\,\cdot\,,\,\cdot\,}:N\times N\to\rationals$&skew-symmetric bilinear form\\\hline
$N^\circ\subseteq N$&{sublattice of finite index with $\set{N_\uf,N^\circ}\subseteq\integers$ and $\set{N,N_\uf\cap N^\circ}\subseteq\integers$}\\\hline
$M^\circ=\Hom(N^\circ,\integers)$&dual lattice to $N^\circ$, a finite-index superlattice of $M$\\\hline
$d_i$&positive integers indexed by $I$ with $\gcd_{i\in I}(d_i)=1$\\\hline
$\s=(e_i:i\in I)$&$(e_i:i\in I)$ is a basis for $N$, $(e_i:i\in I_\uf)$ is a basis for $N_\uf$, and $(d_ie_i:i\in I)$ is a basis for $N^\circ$\\\hline
$N^+=N^+_\s$&$\set{\sum_{i\in I_\uf}a_ie_i:a_i\in\integers,\,a_i\ge0,\,\sum_{i\in I_\uf}a_i>0}\subset N_\uf$\\\hline
$[\,\cdot\,,\,\cdot\,]_\s:N\times N\to\rationals$&$[e_i,e_j]_\s=\set{e_i,e_j}d_j$\\\hline
$\epsilon_{ij}$&$[e_i,e_j]_\s$, an integer except possibly when $i,j\in I_\fr$\\\hline
$(e^*_i:i\in I)$&basis for $M$ dual to $(e_i:i\in I)$\\\hline
$(f_i:i\in I)$&basis for $M^\circ$ given by $f_i=d_i^{-1}e_i^*$\\\hline
$V^*$&real span of $\set{e_i^*:i\in I_\uf}$, dual space to $V$ \\\hline
$\br{\,\cdot\,,\,\cdot\,}:V^*\times V\to\reals$\newline$\br{\,\cdot\,,\,\cdot\,}:M^\circ\times N\to\rationals$&natural pairing\\\hline
$p^*:N_\uf\to M^\circ$&$\br{p^*(e_i),d_je_j}=\epsilon_{ij}$ for all $i\in I_\uf$ and $j\in I$\newline 
$(p^*(e_i):i\in I_\uf)$ required to be linearly independent\\\hline
$(v_i:i\in I)$&$v_i=p^*(e_i)$ for $i\in I_\uf$ and
$v_i\in M^\circ$ for $i\in I_\fr$ chosen to make $(v_i:i\in I)$ linearly independent\\\hline
$z_i$&indeterminates indexed by $I$\\\hline
$z^m$&$\prod_{i\in I}z_i^{c_i}$ for $m=\sum_{i\in I}c_if_i\in M^\circ$\\\hline
$(\zeta_i:i\in I)$&$\zeta_i=z^{v_i}$\\\hline
$\zeta^n$&$\prod_{i\in I}\zeta_i^{c_i}$ for $n=\sum_{i\in I}c_ie_i\in N$\\\hline
$\k$&a field of characteristic zero\\\hline
$\k[[\zeta]]$&$\k[[\zeta_i:i\in I]]$, ring of power series in the $\zeta_i$\\\hline
$\m$&ideal in $\k[[\zeta]]$ consisting of series with constant term zero\\\hline
\end{longtable}

The subscript ``$\fr$'' in the table stands for \newword{frozen} and ``$\uf$'' stands for \newword{unfrozen}.
The values $\epsilon_{ij}$ for $i,j\in I_\fr$ are unimportant for our purposes.
One could, for example, take all of these values to be zero.
The requirement that the $p^*(e_i)$, for $i\in I_\uf$, are linearly independent is a condition on $\set{\,\cdot\,,\,\cdot\,}$.
The elements $v_i$ are vectors whose nonnegative span is called~$\sigma$ in \cite{GHKK}.
The exact choice of the $v_i$ for $i\in I_\fr$ does not affect the construction of the scattering diagram or the important notion of consistency, defined below.
Here we are taking $\sigma$ to be a specific simplicial cone whereas more freedom is given in \cite{GHKK}, but this extra freedom is not meaningful for our purposes.  
The monomials we call $\zeta_i$ are not named in \cite{GHKK}.

The algebraic setting for the scattering diagram is the multivariate formal power series ring $\k[[\zeta]]$ or the quotient $\k[[\zeta]]/\m^{k+1}$ for $k\ge0$.
We will introduce scattering diagrams in both settings simultaneously, for two reasons:
first, because the definitions are the same, except for working modulo $\m^{k+1}$; and second, because the option to work modulo $\m^{k+1}$ is essential to the construction in the setting of $\k[[\zeta]]$.
(Both of these settings fit into a broader level of generality discussed in \cite[Remark~1.7]{GHKK}).
Working modulo $\m^{k+1}$ amounts to setting to zero all monomials in $\set{\zeta_i:\in I_\uf}$ of total degree greater than $k$.

A \newword{rational subspace} in $V^*$ is the intersection of a finite collection of \newword{rational hyperplanes} $\set{v\in V^*:\br{v,n}=0}$ for various $n\in N_\uf$.
A \newword{rational cone} in $V^*$ is the intersection of a finite collection of halfspaces of the form $\set{v\in V^*:\br{v,n}\le0}$ for various $n\in N_\uf$.
A \newword{wall} is a pair $(\d,f_\d)$, where $\d$ is a codimension-$1$ rational cone in $V^*$, contained in $n_0^\perp$ for some \emph{primitive} $n_0\in N^+$, and $f_\d$ is $1+\sum_{\ell\ge1}c_\ell\bigl(z^{p^*(n_0)}\bigr)^\ell$ with coefficients $c_\ell$ in $\k$, considered modulo $\m^{k+1}$ when appropriate.
(Requiring $n_0$ to be primitive in $N^+$ means that there is no $c$ with $0<c<1$ such that $cn_0\in N$.)
By definition, $f_\d$ is a univariate formal power series in $z^{p^*(n_0)}$, but also since $n_0\in N^+$, the term $z^{p^*(n_0)}=\zeta^{n_0}$ is a monomial in the $\zeta_i$, and thus $f_\d$ is in $\k[[\zeta]]$.
It will sometimes be convenient to write $f_\d=f_\d(\zeta^{n_0})$ as a way to name the normal vector $n_0$ explicitly.
 
We call a wall $(\d,f_\d)$ \newword{incoming} if $p^*(n_0)\in\d\oplus\Span_\reals\set{f_i:i\in I_\fr}$.  
Otherwise $(\d,f_\d)$ is \newword{outgoing}.
Say that two walls are \newword{parallel} if they are contained in the same hyperplane.

A \newword{scattering diagram} is a collection $\D$ of walls, satisfying a finiteness condition that we now explain.
Write $\D_k$ for the set of walls $(\d,f_\d)\in\D$ such that $f_\d\not\equiv 1$ modulo $\m^{k+1}$.
The condition is that $\D_k$ is a finite set for all $k\ge 1$.
In particular, $\D=\bigcup_{k\ge1}\D_k$ is a countable collection of walls.
The \newword{support} $\Supp(\D)$ of $\D$ is $\bigcup_{(\d,f_\d)\in\D}\d$, the union of all the walls of $\D$.

\begin{remark}\label{useless dimensions teaser}
We construct scattering diagrams in $V^*$, while \cite{GHKK} constructs scattering diagrams in a larger vector space.
We make simple modifications in the remaining definitions to account for the loss of these dimensions.
(Indeed, we have already modified the definition of incoming walls.)
The omission of these dimensions is justified later in Remarks~\ref{useless dimensions} and~\ref{coeffs don't matter remark}.
\end{remark}

Suppose $\D$ is a scattering diagram.
A piecewise differentiable path $\gamma:[0,1]\to V^*$ is \newword{generic} for $\D$ if it satisfies the following conditions:
\begin{itemize}
\item $\gamma$ does not pass through the intersection of any two non-parallel walls of $\D$.
\item $\gamma$ does not pass through the relative boundary of any wall (its boundary in the hyperplane it spans).
\item Neither $\gamma(0)$ nor $\gamma(1)$ is contained in a wall of $\D$.
\item When $\gamma$ intersects a wall, it crosses the wall transversely (although $\gamma$ may not be differentiable where it intersects the wall).
\end{itemize}

\begin{prop}\label{generic exists}
For any $p,q\in V^*\setminus\Supp(\D)$, there exists a generic path from $p$ to $q$.
\end{prop}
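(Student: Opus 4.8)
The plan is to take $\gamma$ to be a piecewise-linear arc with a single bend: choose an auxiliary point $r\in V^*$ and let $\gamma$ traverse the segment $\overline{pr}$ and then the segment $\overline{rq}$. The straight segment $\overline{pq}$ need not work, because its direction need not be transverse to every wall, but one bend already provides enough freedom to meet every requirement in the definition of a generic path simultaneously, by choosing $r$ outside a negligible set. The essential structural input is the finiteness condition on $\D$: since each $\D_k$ is finite, $\D$ is countable, so the constraints on $r$ reduce to avoiding a countable union of measure-zero subsets of $V^*$, and $V^*\cong\reals^d$ (with $d=\dim V^*$) is not such a union.

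The first step is to record the obstructions of codimension at least $2$. Each wall $\d$ is contained in the hyperplane $H_\d=n_0^\perp$, and its relative boundary $\partial\d$ is a finite union of proper faces, each lying in an affine subspace of dimension at most $d-2$. For two non-parallel walls $\d,\d'$ the hyperplanes $H_\d$ and $H_{\d'}$ are distinct, so $\d\cap\d'\subseteq H_\d\cap H_{\d'}$ lies in a subspace of dimension at most $d-2$. Let $Z$ be the union of all the $\partial\d$ together with all the intersections $\d\cap\d'$ over non-parallel pairs. Since walls are closed, $Z\subseteq\Supp(\D)$; and $Z$ is a countable union of sets, each contained in an affine subspace of dimension at most $d-2$.

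Next, call $r\in V^*$ \emph{unacceptable} if any of the following holds: (i) $r\in\Supp(\D)$; (ii) the segment $\overline{pr}$ or the segment $\overline{rq}$ meets $Z$; (iii) $\br{r-p,n_0}=0$ or $\br{q-r,n_0}=0$ for some wall with primitive normal $n_0$. Each wall lies in a hyperplane, so (i) is a countable union of measure-zero sets. For (ii): if $A$ is an affine subspace with $\dim A\le d-2$, then, since $p\notin\Supp(\D)\supseteq A$, the set of $r$ with $\overline{pr}\cap A\neq\emptyset$ is contained in the union of the lines through $p$ meeting $A$, hence in an affine subspace of dimension at most $d-1$; likewise with $q$ in place of $p$. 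Applying this to the countably many pieces of $Z$ shows that (ii) is also a countable union of measure-zero sets. Finally, each equation in (iii) cuts out an affine hyperplane, and there are countably many. Hence the unacceptable $r$ form a measure-zero set, and we fix an acceptable $r$.

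It remains to verify that $\gamma=\overline{pr}\cup\overline{rq}$ is generic for $\D$. It is piecewise differentiable with its only corner at $r$, and, since $p$, $q$, and $r$ lie outside $\Supp(\D)$, it meets walls only in the relative interiors of its two segments. By (iii) the direction of each segment pairs nontrivially with every wall normal, so each segment meets every hyperplane $H_\d$ in at most one point and transversally --- in particular no segment can lie inside a wall --- and, since the segments avoid $Z$, any such intersection point lies in the relative interior of $\d$. Therefore $\gamma$ crosses transversally every wall it meets, never passes through the relative boundary of a wall, never passes through the intersection of two non-parallel walls, and has both endpoints outside every wall, as required. I expect the only real difficulty to be bookkeeping: organizing $Z$ and the ``shadows'' of $Z$ from $p$ and from $q$ so that each clause of the definition of a generic path is visibly either the avoidance of one of the listed measure-zero sets or an immediate consequence of $p,q,r\notin\Supp(\D)$. (In low dimensions $Z$ may be empty and $V^*\setminus\Supp(\D)$ may fail to be connected; this is harmless, since a generic path is allowed to cross walls, as long as it does so transversally.)
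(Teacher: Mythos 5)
Your proof is correct and follows essentially the same strategy as the paper: choose the bend point(s) of a piecewise-linear path generically, using that $\D=\bigcup_k\D_k$ is countable so the bad choices form a countable union of negligible sets. The only differences are minor: the paper uses a three-segment path routed through the interior of the positive cone (which no wall hyperplane enters, since all normals lie in $N^+$) to get transversality for free, whereas you impose transversality directly via the countably many hyperplane conditions in (iii); also, your justification ``$p\notin\Supp(\D)\supseteq A$'' is not literally correct (the affine span $A$ of a piece of $Z$ need not lie in $\Supp(\D)$), but the case $p\in A$ is harmless because then the set of $r$ for which $\seg{pr}$ meets that piece of $Z$ is itself contained in $A$, which already has dimension at most $d-2$.
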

\begin{proof}
Since each wall of $\D$ is in $n_0^\perp$ for some $n_0\in N^+$, there is no wall of $\D$ contained in a hyperplane that intersects the interior of the positive cone in $V^*$ (the full-dimensional cone spanned by the $e^*_i$ for $i\in I_\uf$).

Fix $p,q\in V^*\setminus\Supp(\D)$.
We observe that there exists a point $r$ in the interior of the positive cone such that the straight line segment $\seg{pr}$ is a generic path.
Indeed, if $\seg{pr}$ fails to be generic for some $r$, then either (1) it passes through the intersection of nonparallel walls or the relative boundary of a wall, or (2) it is contained in the hyperplane containing some wall.
But (2) can't happen when $r$ is in the interior of the positive cone.
We can rule out (1) by choosing $r$ to avoid a countable union of proper affine subspaces of $V^*$.
(Specifically, $r$ must not be in the affine span of $p$ and an intersection of nonparallel walls or in the affine span of $p$ and a proper face of a wall.)
Similarly, we find a generic line segment $\seg{r'q}$ for some $r'$ in the positive cone.
The concatenation of the segments $\seg{pr}$, $\seg{rr'}$, and $\seg{r'q}$ is the desired path.
\end{proof}

Suppose $\gamma$ is a generic path for $\D$.
If $\gamma$ crosses a wall $\d\subseteq n_0^\perp$, specifically with $\gamma(t)$ contained in the wall, the \newword{wall-crossing automorphism} $\p_{\gamma,\d}$ of $\k[[\zeta]]$ is
\begin{equation}\label{p def}
\p_{\gamma,\d}(\zeta_i)=\zeta_if_\d^\br{v_i,\pm n'_0},
\end{equation}
where $n_0'$ is the normal vector to $\d$ that is contained in $N^+$ and is primitive in~$N^\circ$.
We take $+n_0'$ in the formula when $\br{\gamma'(t),n_0}<0$ or $-n_0'$ when ${\br{\gamma'(t),n_0}>0}$.
If~$\gamma$ is not differentiable at $t$, we abuse notation and still write ``$\br{\gamma'(t),n_0}<0$'' and ``${\br{\gamma'(t),n_0}>0}$'' to describe the two directions $\gamma$ can cross the wall transversely.

A few comments on wall-crossing automorphisms are in order.
First, and most importantly, we emphasize a subtlety in the definition of $\p_{\gamma,\d}$:
Even though the function $f_\d$ is a formal power series in $z^{n_0}$, where $n_0\in N^+$ is primitive in $N$, the exponent on $f_\d$ in the formula depends on $n_0'$, a positive integer multiple of $n_0$ which is primitive in $N^\circ$.
This is precisely where, later on when we consider the cluster scattering diagram associated to an exchange matrix $B$, the theory allows $B$ to be \emph{skew-symmetrizable}, rather than requiring that $B$ be \emph{skew-symmetric}.
Second, $\p_{\gamma,\d}$ is indeed an automorphism, with inverse given by $\zeta_i\mapsto \zeta_if_\d^{-\br{v_i,\pm n'_0}}$, which is the wall-crossing automorphism associated to crossing the wall in the opposite direction.
We see that this inverse map sends $\zeta_if_\d^\br{v_i,\pm n'_0}$ to $\zeta_i$ as required because $f_\d$ is a sum of powers of $\zeta^{n_0}=z^{p^*(n_0)}$ and $\br{p^*(n_0),n_0'}=\set{n_0,n_0'}=0$ by the skew-symmetry of $\set{\,\cdot\,,\,\cdot\,}$.

For each $k\ge 1$, we write $\p_{\gamma,\D_k}$ for the automorphism $\p_{\gamma,\d_\ell}\circ\cdots\circ\p_{\gamma,\d_1}$ of $\k[[\zeta]]$ such that $\set{\d_1,\ldots,\d_\ell}$ is the sequence of walls of $\D_k$ crossed by $\gamma$, with wall $\d_i$ crossed at time $t_i$ and with $t_1\le t_2\le\cdots\le t_\ell$.
This is well-defined because if $t_i=t_{i+1}$, the functions $\p_{\gamma,\d_i}$ and $\p_{\gamma,\d_{i+1}}$ commute.
Indeed, if $t_1=t_2=\cdots=t_\ell$, then $\p_{\gamma,\d_\ell}\circ\cdots\circ\p_{\gamma,\d_1}$ sends $\zeta_i$ to $\zeta_i\bigl(\prod_{i=1}^\ell f_{\d_i}\bigr)^\br{v_i,\pm n'_0}$.
(This is verified by an argument similar to the argument above on the inverse of $\p_{\gamma,\d}$.)
We define the \newword{path-ordered product} $\p_{\gamma,\D}:\k[[\zeta]]\to\k[[\zeta]]$ to be the limit of $\p_{\gamma,\D_k}$ as $k\to\infty$.
A scattering diagram $\D$ is \newword{consistent} if each path-ordered product $\p_{\gamma,\D}$ depends only on the endpoints $\gamma(0)$ and $\gamma(1)$.

We make some useful observations about consistency.
The first observation is that, when $\D$ is consistent, in light of Proposition~\ref{generic exists}, one can define $\p_{p,q,\D}$ for any $p,q\in V^*\setminus\Supp(\D)$ to be $\p_{\gamma,\D}$ for any generic path from $p$ to $q$.
If $p,q,r\in V^*\setminus\Supp(\D)$, then $\p_{q,r,\D}\circ\p_{p,q,\D}=\p_{p,r,\D}$.

The second observation is the following proposition, which makes it slightly easier to compute path-ordered products in consistent scattering diagrams.
\begin{prop}\label{easier limit}
Suppose $\D$ is a consistent scattering diagram and suppose $p$ and $q$ are in $V^*\setminus\Supp(\D)$.
Suppose for each $k\ge 1$ that $\gamma_k$ is a path from $p$ to $q$ that is generic in $\D_k$.
Then $\p_{p,q,\D}$ agrees, modulo $\m^{k+1}$, with $\p_{\gamma_k,\D_k}$ for each $k\ge1$.
Thus $\p_{p,q,\D}$ is the limit, as $k\to\infty$, of $\p_{\gamma_k,\D_k}$.
\end{prop}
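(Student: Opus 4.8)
The plan is to fix $k\ge1$ and establish the first assertion, that $\p_{p,q,\D}$ agrees with $\p_{\gamma_k,\D_k}$ modulo $\m^{k+1}$; the ``Thus'' sentence then follows at once, because a sequence of automorphisms of $\k[[\zeta]]$ that agrees with $\p_{p,q,\D}$ modulo $\m^{k+1}$ for every $k$ converges $\m$-adically to $\p_{p,q,\D}$. The one mechanism behind the argument is that a wall $(\d,f_\d)$ not belonging to $\D_k$ has $f_\d\equiv1\pmod{\m^{k+1}}$, hence $f_\d^{\br{v_i,\pm n'_0}}\equiv1\pmod{\m^{k+1}}$ for every exponent occurring in \eqref{p def}, so its wall-crossing automorphism induces the identity on $\k[[\zeta]]/\m^{k+1}$. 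Since each wall-crossing automorphism preserves every ideal $\m^j$ and composition is compatible with congruence modulo $\m^{k+1}$, such a factor may be deleted from any path-ordered product without changing the result modulo $\m^{k+1}$.

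First I would treat a path $\eta$ from $p$ to $q$ that is generic for all of $\D$ (such a path is automatically generic for $\D_k$, since $\D_k\subseteq\D$). For $k'\ge k$, the walls of $\D_k$ crossed by $\eta$ form, in time order, a subsequence of the walls of $\D_{k'}$ crossed by $\eta$, and the complementary factors lie in $\D_{k'}\setminus\D_k$; deleting them gives $\p_{\eta,\D_{k'}}\equiv\p_{\eta,\D_k}\pmod{\m^{k+1}}$. Letting $k'\to\infty$ yields $\p_{\eta,\D}\equiv\p_{\eta,\D_k}\pmod{\m^{k+1}}$, while $\p_{\eta,\D}=\p_{p,q,\D}$ by consistency. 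It therefore remains to produce one path $\eta$ from $p$ to $q$ that is generic for $\D$ and satisfies $\p_{\eta,\D_k}=\p_{\gamma_k,\D_k}$.

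Such an $\eta$ is obtained by perturbing $\gamma_k$, and here the finiteness of $\D_k$ is essential. Since $\gamma_k$ crosses each of the finitely many walls of $\D_k$ transversely, through relative interiors, and away from intersections of non-parallel walls, there is $\epsilon>0$ such that any path $\eta$ from $p$ to $q$ with $\sup_t\|\eta(t)-\gamma_k(t)\|<\epsilon$ that is generic for $\D_k$ meets $\Supp(\D_k)$ only within small time-windows around the crossing times of $\gamma_k$ and, within each window, meets only the wall (or mutually parallel walls) crossed there by $\gamma_k$, with the same net direction; consecutive crossings of a single wall, or of parallel walls, compose to that net effect, so $\p_{\eta,\D_k}=\p_{\gamma_k,\D_k}$. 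That such an $\eta$ may in addition be taken generic for $\D$ follows from the reasoning of Proposition~\ref{generic exists}: take a polygonal path from $p$ to $q$ inside the $\epsilon$-tube around $\gamma_k$, with vertices and segment directions in general position --- off every hyperplane $n_0^\perp$ coming from a wall of $\D$ and off the countably many codimension-$\ge2$ loci formed by intersections of non-parallel walls and by relative boundaries of walls. Chaining the three steps gives $\p_{\gamma_k,\D_k}=\p_{\eta,\D_k}\equiv\p_{\eta,\D}=\p_{p,q,\D}\pmod{\m^{k+1}}$.

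The perturbation step is the only genuine obstacle: one must check both that a sufficiently small nudge of $\gamma_k$ leaves its path-ordered product in the finite diagram $\D_k$ unchanged (the subtlety being spurious, mutually cancelling crossings of one wall), and that every such tube contains a path generic for the infinite diagram $\D$ (as in Proposition~\ref{generic exists}; note that $\Supp(\D)$ need not be closed, but it is a measure-zero countable union of codimension-$1$ cones and the relevant bad loci have codimension $\ge2$, so general-position arguments still apply). Everything else is formal once one records that the wall-crossing automorphisms of walls outside $\D_k$ are trivial modulo $\m^{k+1}$.
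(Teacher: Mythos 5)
Your proof is correct, but it is organized differently from the paper's. The paper keeps $\gamma_k$ and cuts it: it chooses points $p_0=p,p_1,\ldots,p_\ell=q$ on $\gamma_k$ itself, lying in $V^*\setminus\Supp(\D)$ and separating the finitely many crossing loci of $\gamma_k$ with $\D_k$; consistency of $\D$ gives the factorization $\p_{p,q,\D}=\p_{p_{\ell-1},p_\ell,\D}\circ\cdots\circ\p_{p_0,p_1,\D}$, each factor is compared modulo $\m^{k+1}$ with its $\D_k$-counterpart (walls outside $\D_k$ acting trivially there), and the short pieces of $\gamma_k$ reassemble to give $\p_{\gamma_k,\D_k}$. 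You instead keep the path whole and replace it by a nearby path $\eta$ generic for all of $\D$, proving (i) that $\p_{\eta,\D}\equiv\p_{\eta,\D_k}\pmod{\m^{k+1}}$ for such an $\eta$ --- which is exactly the ``true by definition'' case noted right after the statement --- and (ii) a perturbation lemma: a sufficiently small $\D$-generic perturbation of $\gamma_k$ has the same $\D_k$-path-ordered product, because $\D_k$ is finite, the perturbed path meets $\Supp(\D_k)$ only in small windows around the crossing times, and spurious back-and-forth crossings of a parallel family within a window cancel. Both arguments rest on the same two pillars (triviality mod $\m^{k+1}$ of walls outside $\D_k$, and finiteness of $\D_k$), but the paper's cut-and-recompose route lets consistency of $\D$ absorb the re-routing and so avoids any tube construction, whereas your route never needs intermediate basepoints at the cost of verifying the cancellation of spurious crossings and the existence of a $\D$-generic path in every tube (which you correctly reduce to the general-position argument of Proposition~\ref{generic exists}).
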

When we say that $\p_{p,q,\D}$ and $\p_{\gamma_k,\D_k}$ agree modulo $\m^{k+1}$, we mean that $\p_{p,q,\D}$ descends to a function $\k[[\zeta]]/{\m^{k+1}}\to\k[[\zeta]]/{\m^{k+1}}$ that coincides with $\p_{\gamma_k,\D_k}$.
The statement is true by definition if each $\gamma_k$ is generic for $\D$.
The point of the proposition is to allow us to merely check genericity of $\gamma_k$ for $\D_k$.
\begin{proof}
Each path $\gamma_k$ crosses some sequence of walls of $\D_k$ (possibly crossing some parallel walls simultaneously).
Choose points $p_0,\ldots,p_\ell$ on $\gamma_k$ and in $V^*\setminus\Supp(\D)$ between all distinct intersection points of $\gamma_k$ with walls of $\D_k$, taking $p_0=p$ and $p_\ell=q$.
Then $\p_{p,q,\D}=\p_{p_{\ell-1},p_\ell,\D}\circ\cdots\circ\p_{p_0,p_1,\D}$.
This agrees modulo $\m^{k+1}$ with $\p_{p,q,\D_k}=\p_{p_{\ell-1},p_\ell,\D_k}\circ\cdots\circ\p_{p_0,p_1,\D_k}$.
But by cutting $\gamma_k$ into shorter paths from $p_{i-1}$ to $p_i$ for each $i=1,\ldots,\ell$, we see that $\p_{p,q,\D_k}=\p_{\gamma_k,\D_k}$, and we conclude that $\p_{p,q,\D}$ and $\p_{\gamma_k,\D_k}$ agree modulo $\m^{k+1}$ for all $k\ge1$.
\end{proof}

The third observation is the following proposition, which uses the previous proposition to connect the consistency of $\D$ to the consistency of each $\D_k$.
\begin{prop}\label{Dk consist}
A scattering diagram $\D$ is consistent if and only if each $\D_k$ is consistent (modulo $\m^{k+1}$).
\end{prop}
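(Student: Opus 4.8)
The plan is to reduce both directions to a single elementary observation: if $\gamma$ is generic for $\D$ (hence, since $\D_k\subseteq\D$, also generic for every $\D_k$), then for each $k\ge1$ the path-ordered product $\p_{\gamma,\D}$ agrees modulo $\m^{k+1}$ with $\p_{\gamma,\D_k}$. This is really just an unpacking of the definition of $\p_{\gamma,\D}$ as an $\m$-adic limit: each wall of $\D$ that $\gamma$ crosses but that does not lie in $\D_k$ has wall function $f_\d\equiv1\pmod{\m^{k+1}}$, so its wall-crossing automorphism reduces to the identity modulo $\m^{k+1}$; therefore $\p_{\gamma,\D_j}\equiv\p_{\gamma,\D_k}\pmod{\m^{k+1}}$ for all $j\ge k$, and this congruence is inherited by the limit. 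I would record this at the top of the proof.

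Granting the observation, the ``if'' direction is short. Assume each $\D_k$ is consistent modulo $\m^{k+1}$ and let $\gamma_1,\gamma_2$ be paths generic for $\D$ with the same endpoints. Each $\gamma_i$ is generic for $\D_k$, so consistency of $\D_k$ yields $\p_{\gamma_1,\D_k}\equiv\p_{\gamma_2,\D_k}\pmod{\m^{k+1}}$; combining with the observation, $\p_{\gamma_1,\D}\equiv\p_{\gamma_2,\D}\pmod{\m^{k+1}}$ for all $k$, hence $\p_{\gamma_1,\D}=\p_{\gamma_2,\D}$. Thus $\D$ is consistent.

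For the ``only if'' direction, assume $\D$ is consistent, fix $k$, and let $\gamma_1,\gamma_2$ be generic for $\D_k$ with common endpoints $p,q$ (so $p,q\notin\Supp(\D_k)$, but possibly $p$ or $q$ lies in $\Supp(\D)$). When $p,q\notin\Supp(\D)$, I would invoke Proposition~\ref{easier limit}: applied to each $\gamma_i$ (completed arbitrarily to a family of generic paths at the remaining levels) it gives $\p_{\gamma_1,\D_k}\equiv\p_{p,q,\D}\equiv\p_{\gamma_2,\D_k}\pmod{\m^{k+1}}$, which is the required equality. To handle the general case I would perturb the endpoints: since $\Supp(\D_k)$ is closed and $\Supp(\D)$ is a countable union of nowhere-dense cones in the finite-dimensional space $V^*$, there are points $p',q'\notin\Supp(\D)$ so close to $p,q$ that the segments $\seg{p'p}$ and $\seg{qq'}$ lie in $V^*\setminus\Supp(\D_k)$. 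Prepending $\seg{p'p}$ to $\gamma_i$ and appending $\seg{qq'}$ gives a path $\gamma_i'$ that is still generic for $\D_k$, runs from $p'$ to $q'$, and, since the two appended segments cross no wall of $\D_k$, satisfies $\p_{\gamma_i',\D_k}=\p_{\gamma_i,\D_k}$; this reduces the general case to the one already treated, and $\D_k$ is consistent.

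The one place that needs care — and the only real obstacle — is exactly this perturbation: ``generic for $\D_k$'' allows the endpoints to sit on walls of $\D\setminus\D_k$, whereas Proposition~\ref{easier limit} demands endpoints in $V^*\setminus\Supp(\D)$, so one must move the endpoints off $\Supp(\D)$ without changing which walls of $\D_k$ are crossed. Everything else is bookkeeping built on the fact that a wall function congruent to $1$ modulo $\m^{k+1}$ contributes nothing at that truncation level.
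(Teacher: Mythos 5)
Your proposal is correct and follows essentially the same route as the paper: the ``only if'' direction is handled via Proposition~\ref{easier limit} together with a perturbation of endpoints off $\Supp(\D)$ when they lie on walls of $\D\setminus\D_k$ (exactly the paper's move), and the ``if'' direction is the same observation that each $\p_{\gamma,\D_k}$, and hence the limit $\p_{\gamma,\D}$, depends only on the endpoints. Your extra care in spelling out the $\m$-adic congruence $\p_{\gamma,\D}\equiv\p_{\gamma,\D_k}\pmod{\m^{k+1}}$ is just a fuller account of what the paper leaves implicit.
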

\begin{proof}
Suppose $\D$ is consistent and suppose $\gamma_k$ is a path from $p$ to $q$, generic in $\D_k$.
If $p,q\not\in\Supp(\D)$, then Proposition~\ref{easier limit} says that $\p_{\gamma_k,\D_k}=\p_{p,q,\D}$ modulo $\m^{k+1}$, so in particular  $\p_{\gamma_k,\D_k}$ depends only on $p$ and $q$.
If $p$ and/or $q$ is in $\Supp(\D)$, then since $\D_k$ is finite, by taking $p'$ and/or $q'$ in small enough balls about $p$ and/or $q$, we can extend $\gamma_k$ by pre-appending a path from $p'$ to $p$ and/or post-appending a path from $q$ to $q'$ without changing $\p_{\gamma_k,\D_k}$.
Then we apply Proposition~\ref{easier limit} as above.

Conversely, if each $\D_k$ is consistent, then in the definition of $\p_{\gamma,\D}$, each $\p_{\gamma,\D_k}$ depends only on the endpoints of $\gamma$, so $\D$ is consistent.
\end{proof}

For the fourth observation, we re-use the definition above verbatim to reinterpret each path-ordered product $\p_{\gamma,\D}$ as a map sending each Laurent monomial $z^m$ (for $m\in M^\circ)$ to $z^m$ times an element of $\k[[\zeta]]$.
In this interpretation, crossing a wall $(\d,f_\d(\zeta^{n_0}))$ fixes $z_i$ for $i\in I_\fr$ and for $m\in M^\circ\cap V^*$, sends $z^m$ to $z^mf_\d^\br{m,\pm n_0'}$ (with $n_0'$ and the choice of sign for $\pm n_0'$ as in the definition).
The following proposition is immediate because of the linearity in both interpretations.

\begin{prop}\label{just z's}    
A path-ordered product $\p_{\gamma,\D}$ is determined entirely by its values $\p_{\gamma,\D}(z_i)$ for $i\in I_\uf$, or equivalently by its values $\p_{\gamma,\D}(z^m)$ for $m\in M^\circ\cap V^*$.
\end{prop}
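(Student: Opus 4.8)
The plan is to carry out the whole argument in the monomial interpretation of $\p_{\gamma,\D}$ from the fourth observation above, in which $\p_{\gamma,\D}$ sends each Laurent monomial $z^m$ ($m\in M^\circ$) to $z^m$ times a unit of $\k[[\zeta]]$ and fixes $z_i$ for $i\in I_\fr$. Set $h(m)=z^{-m}\,\p_{\gamma,\D}(z^m)\in\k[[\zeta]]^\times$. The one structural fact to isolate is that $h$ is a group homomorphism from $(M^\circ,+)$ to $(\k[[\zeta]]^\times,\cdot)$: each wall-crossing $\p_{\gamma,\d}$ multiplies $z^m$ by $f_\d^{\br{m,\pm n_0'}}$ (with the exponent vanishing on the frozen part, so the formula holds for every $m\in M^\circ$), the exponent $\br{m,\pm n_0'}$ is additive in $m$, and this additivity survives composing finitely many wall-crossings and then taking the $\m$-adic limit. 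Hence $h$, and with it the action of $\p_{\gamma,\D}$ on every Laurent monomial, is determined by the values of $h$ on any $\ZZ$-basis of $M^\circ$.

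Next I would record two facts about $M^\circ$. First, the fourth observation gives $\p_{\gamma,\D}(z_i)=z_i$, i.e.\ $h(f_i)=1$, for every $i\in I_\fr$; concretely this is the identity $\br{f_i,n_0'}=0$, which holds because $n_0'\in N^+\subseteq N_\uf$ is a $\ZZ$-combination of the $e_j$ with $j\in I_\uf$ while $f_i=d_i^{-1}e_i^*$ with $i\notin I_\uf$. Second, $\{f_i:i\in I_\uf\}$ is a $\ZZ$-basis of $M^\circ\cap V^*$: the subspace $V^*$ is the real span of $\{e_i^*:i\in I_\uf\}=\{d_if_i:i\in I_\uf\}$, and $\{f_i:i\in I\}$ is a $\ZZ$-basis of $M^\circ$, so an element of $M^\circ$ lies in $V^*$ exactly when all of its $f_i$-coordinates indexed by $I_\fr$ vanish. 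Since $\{f_i:i\in I_\uf\}\cup\{f_i:i\in I_\fr\}$ is thus a $\ZZ$-basis of $M^\circ$ with $h(f_i)=1$ for all $i\in I_\fr$, the homomorphism $h$ is determined by the values $h(f_i)=z_i^{-1}\p_{\gamma,\D}(z_i)$ for $i\in I_\uf$ --- equivalently, by the values $\p_{\gamma,\D}(z^m)$ as $m$ runs over $M^\circ\cap V^*$, that lattice being precisely $\Span_\ZZ\{f_i:i\in I_\uf\}$.

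It remains to pass from ``determined on all Laurent monomials'' back to ``determined as a map''. Here $\p_{\gamma,\D}$ is $\k$-linear and ($\m$-adically) continuous, being the limit of the continuous $\k$-linear maps $\p_{\gamma,\D_k}$, and the $\k$-span of the monomials $z^m$ is dense in the ambient ring, so the values $\p_{\gamma,\D}(z^m)$ pin down $\p_{\gamma,\D}$; in the original interpretation this likewise recovers the automorphism of $\k[[\zeta]]$, since each topological generator $\zeta_i=z^{v_i}$ is itself one of these monomials. I do not expect a genuine obstacle: as the paper indicates, the statement is immediate from linearity. The only points that want a moment's care are the two lattice computations just given (both short, using only the data of Table~\ref{init data}) and the routine topological remark that a continuous $\k$-linear map is pinned down by its restriction to a dense subspace.
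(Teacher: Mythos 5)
Your argument is correct and is exactly the elaboration of what the paper leaves implicit: the paper dismisses this proposition as ``immediate because of the linearity in both interpretations,'' and your homomorphism $h(m)=z^{-m}\p_{\gamma,\D}(z^m)$, the triviality of $h$ on the frozen basis vectors $f_i$ ($i\in I_\fr$), and the identification of $\set{f_i:i\in I_\uf}$ as a $\ZZ$-basis of $M^\circ\cap V^*$ are precisely the linearity facts being invoked. No gaps; you have simply written out the one-line proof in full.
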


The fifth and final observation is that, in some sense, consistence of a scattering diagram only depends on ${[\epsilon_{ij}]_{i,j\in I_\uf}}$.
See also Remark~\ref{coeffs don't matter remark}.
The matrix ${[\epsilon_{ij}]_{i,j\in I_\uf}}$ is called an \newword{exchange matrix} and will have a more prominent role beginning in Section~\ref{clus scat sec}.

\begin{prop}\label{coeffs don't matter}
Take two choices of initial data as in Table~\ref{init data}, agreeing to the extent that $I_\uf$, the $e_i$ for $i\in I_\uf$, the $d_i$ for $i\in I_\uf$, and the restriction of $\br{\,\cdot\,,\,\cdot\,}$ to $N_\uf\times N_\uf$ are the same for both choices.
Distinguish the two choices by placing ``primes'' on the notation for one choice, writing for example $I$ and $I'$ for the two indexing sets.
If $\D$ is a consistent scattering diagram for the unprimed data, then a consistent scattering diagram for the primed data is obtained by replacing each $(\d,f_\d(\zeta^{n_0}))\in\D$ by $(\d,f_\d((\zeta')^{n_0}))$.   
\end{prop}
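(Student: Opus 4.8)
The plan is to show that $\D'$ is a scattering diagram for the primed data and that its path-ordered products carry exactly the same information as those of $\D$, once the two families of monomials are identified in the evident way; consistency of $\D$ then forces consistency of $\D'$. The engine of the argument is Proposition~\ref{just z's}: a path-ordered product is determined by its values on $z_i$, $i\in I_\uf$, so it is enough to see that crossing a wall acts on those variables in a way that involves only the data shared by the two choices.

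First I would verify that $\D'$ is a scattering diagram for the primed data. Its walls have the same underlying cones, and $V^*$, the saturated sublattice $N_\uf$, the cone $N^+$, and the meaning of ``primitive in $N$'' for vectors of $N_\uf$ are the same for both choices, so each $\d$ remains a codimension-$1$ rational cone contained in $n_0^\perp$ for the same primitive $n_0\in N^+$, and $f_\d((\zeta')^{n_0})=1+\sum_\ell c_\ell\bigl((\zeta')^{n_0}\bigr)^\ell$ still has the form required of a wall function. Writing $n_0=\sum_{i\in I_\uf}a_ie_i$, the monomial $(\zeta')^{n_0}=\prod_{i\in I_\uf}(\zeta'_i)^{a_i}$ has the same positive total degree $\sum_ia_i$ as $\zeta^{n_0}$, so $f_\d((\zeta')^{n_0})\not\equiv1$ modulo $(\m')^{k+1}$ exactly when $f_\d(\zeta^{n_0})\not\equiv1$ modulo $\m^{k+1}$; hence $\D'_k$ is finite for all $k$, with $\D'_k\leftrightarrow\D_k$.

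Since the cones are unchanged, a path is generic for $\D$ if and only if it is generic for $\D'$, and it then crosses the same walls in the same order and directions, with the same primitive normals $n_0^{(k)}\in N^+$. I would next observe that the primitive-in-$N^\circ$ normal $n_0'$ of a wall is the same for both choices: because $n_0\in N_\uf$, the least positive integer $\delta$ with $\delta n_0\in N^\circ$ is the least with $\delta n_0\in N^\circ\cap N_\uf=\ZZ\{d_ie_i:i\in I_\uf\}$, and this lattice depends only on the shared $e_i$, $d_i$ ($i\in I_\uf$). Using the reinterpretation of path-ordered products preceding Proposition~\ref{just z's}, together with the defining property $\br{p^*(n),\cdot}=\set{n,\cdot}$ of $p^*$, crossing $(\d,f_\d(\zeta^{n_0}))$ fixes $z_i$ for $i\in I_\fr$, sends $z_i\mapsto z_if_\d(\zeta^{n_0})^{\pm\br{f_i,n_0'}}$ for $i\in I_\uf$ with $\br{f_i,n_0'}=\delta a_i/d_i$, and sends each monomial $\zeta^n$, $n\in N^+$, to $\zeta^nf_\d(\zeta^{n_0})^{\pm\set{n,n_0'}}$ with $\set{n,n_0'}=\delta\set{n,n_0}$. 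Iterating along $\gamma$, $\p_{\gamma,\D}(z_i)$ equals $z_i$ times a power series in the linearly independent monomials $\zeta^n$, $n\in N^+\cup\{0\}$; its scalar coefficients are produced by a recursion using only the coefficients $c_\ell$ of the wall functions, the rule $\zeta^{n_1}\zeta^{n_2}=\zeta^{n_1+n_2}$ (addition in $N_\uf$), the integers $\delta$ and $a_i$, the pairings $\set{n,n_0}$ for $n,n_0\in N_\uf$, and the crossing directions --- all of which the two choices share. The identical recursion computes $\p_{\gamma,\D'}(z'_i)$ in terms of the monomials $(\zeta')^n$, the infinite case following by passage to the limit over $k$ using $\D'_k\leftrightarrow\D_k$. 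Thus $\p_{\gamma,\D}(z_i)=z_i\sum_nc^{(\gamma,i)}_n\zeta^n$ and $\p_{\gamma,\D'}(z'_i)=z'_i\sum_nc^{(\gamma,i)}_n(\zeta')^n$ with the \emph{same} scalars $c^{(\gamma,i)}_n$.

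To finish: if $\D$ is consistent and $\gamma_1,\gamma_2$ are generic paths with common endpoints, Proposition~\ref{just z's} gives $\p_{\gamma_1,\D}(z_i)=\p_{\gamma_2,\D}(z_i)$ for $i\in I_\uf$, i.e.\ $c^{(\gamma_1,i)}_n=c^{(\gamma_2,i)}_n$ for all $n$; hence $\p_{\gamma_1,\D'}(z'_i)=\p_{\gamma_2,\D'}(z'_i)$ for $i\in I_\uf$, and Proposition~\ref{just z's} applied to $\D'$ yields $\p_{\gamma_1,\D'}=\p_{\gamma_2,\D'}$, so $\D'$ is consistent. The step I expect to be the real obstacle is the equality of the two values of $\delta$: the wall-crossing automorphisms of $\D$ and $\D'$ genuinely differ (they act differently on the frozen variables, and the exponents are defined through $N^\circ$, which lies in the non-shared data), so one must isolate precisely what is shared; the saving observation is that the relevant multiple of $n_0$ is already determined by $N^\circ\cap N_\uf$, which does lie in the shared data. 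The remaining discipline is to track everything through the monomials $\zeta^n$ indexed by the shared lattice $N_\uf$, rather than through the $z_i$ or $\zeta_i$, which mix in the non-shared frozen directions.
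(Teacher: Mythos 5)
Your proof is correct and follows essentially the same route as the paper's: reduce via Proposition~\ref{just z's} to the action on the unfrozen variables, and observe that every quantity entering the wall-crossing recursion (the pairings $\set{n,n_0}$ on $N_\uf\times N_\uf$ and the normals) is part of the shared data. Your explicit check that the $N^\circ$-primitive normal $n_0'$ is the same for both choices, via $N^\circ\cap N_\uf=\ZZ\set{d_ie_i:i\in I_\uf}$, is a point the paper's one-line argument leaves implicit, and it is handled correctly.
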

We emphasize that $I$ and $I'$ are not assumed to coincide, beyond the fact that $I_\uf=I'_\uf$.
Since $(e_i:i\in I_\uf)$ and $(e'_i:i\in I_\uf)$ coincide, we have $N_\uf=N'_\uf$, ${N^+=(N')^+}$ and $V^*=(V')^*$, but $N$ and $N'$ need not even have the same dimension.
\begin{proof}
The agreement between unprimed and primed data is enough to imply that the quantities $\br{v_i,n}$ and $\br{v'_i,n}$ agree for each $n\in N^+=(N')^+$.
Thus for each $m\in M^\circ\cap V^*$ and generic path $\gamma$, the wall-crossing maps applied in computing $\p_{\gamma,\D}(z^m)$ commute with replacing each $\zeta_i$ by $\zeta'_i$ for $i\in I_\uf=I'_\uf$.
By Proposition~\ref{just z's}, we are done.
\end{proof}

Two scattering diagrams $\D$ and $\D'$ are \newword{equivalent} if and only if for each path $\gamma$ that is generic for both $\D$ and $\D'$, the path-ordered products $\p_{\gamma,\D}$ and $\p_{\gamma,\D'}$ coincide.
A point $p\in V^*$ is \newword{general} if there is at most one $n_0\in N^+$ such that $p\in n_0^\perp$.
A scattering diagram $\D$ defines a function $p\mapsto f_p(\D)=\prod_{\d\ni p}f_\d\in\k[[\zeta]]$ on general points $p$.
The following is \cite[Lemma~1.9]{GHKK}.  
\begin{lemma}\label{lem1.9}
Two scattering diagrams $\D$ and $\D'$ are equivalent if and only if $f_p(\D)=f_p(\D')$ for all general points $p$.
\end{lemma}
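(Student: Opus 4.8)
The plan is to prove both directions by analyzing path-ordered products locally, near a single general point. For the forward direction, suppose $\D$ and $\D'$ are equivalent. Fix a general point $p$; it lies in at most one hyperplane $n_0^\perp$, and the walls of $\D$ (resp. $\D'$) containing $p$ are all parallel, contained in that common hyperplane. I would choose a short generic path $\gamma$ for both $\D$ and $\D'$ that crosses $p$ transversely once, at time $t$, with $\langle\gamma'(t),n_0\rangle<0$. The only walls of $\D$ crossed by $\gamma$ are those containing $p$ (shrinking $\gamma$ if needed, using the finiteness of each $\D_k$ to control walls with nontrivial function modulo $\m^{k+1}$ and the fact that non-parallel walls are avoided). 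By the commuting computation for simultaneously-crossed parallel walls quoted in the excerpt, $\p_{\gamma,\D}$ sends $\zeta_i$ to $\zeta_i\bigl(\prod_{\d\ni p}f_\d\bigr)^{\langle v_i,n_0'\rangle}=\zeta_i\,f_p(\D)^{\langle v_i,n_0'\rangle}$, and similarly $\p_{\gamma,\D'}(\zeta_i)=\zeta_i\,f_p(\D')^{\langle v_i,n_0'\rangle}$. Equivalence forces these to agree for all $i\in I_\uf$. Since $(v_i:i\in I)$ is linearly independent and $n_0'\neq 0$, there is some $i\in I_\uf$ with $\langle v_i,n_0'\rangle\neq 0$; comparing exponents (both $f_p(\D)$ and $f_p(\D')$ are units with constant term $1$, so the $e$-th-root/logarithm argument applies, or more simply one just reads off coefficients inductively modulo $\m^{k+1}$) gives $f_p(\D)=f_p(\D')$.

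For the converse, suppose $f_p(\D)=f_p(\D')$ for all general points $p$. Let $\gamma$ be generic for both diagrams. By Proposition~\ref{just z's} it suffices to check $\p_{\gamma,\D}(z^m)=\p_{\gamma,\D'}(z^m)$ for $m\in M^\circ\cap V^*$, and by Proposition~\ref{easier limit} it suffices to check this modulo $\m^{k+1}$ for each $k$, i.e.\ to compare $\p_{\gamma,\D_k}$ and $\p_{\gamma,\D'_k}$ where now only finitely many walls are in play. Along $\gamma$, each crossing of a wall of $\D_k$ happens at a point which I may assume (after a small generic perturbation of $\gamma$ fixing its endpoints, legitimate since only finitely many walls matter) is a general point $p$; at such a point the local contribution to $\p_{\gamma,\D_k}$ is multiplication of $z^m$ by $f_p(\D)^{\langle m,\pm n_0'\rangle}$, and likewise for $\D'$ with the \emph{same} $f_p$. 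The only remaining subtlety is that $\gamma$ might cross a general point $p$ lying on walls of one diagram but not the other: but if $p\notin\Supp(\D)$ then $f_p(\D)=1$, hence $f_p(\D')=1$, and since $f_p(\D')$ is a product of the $f_\d$ over walls $\d\ni p$ — each of which has constant term $1$ and no negative coefficients in the relevant sense is \emph{not} needed; rather one notes a product of power series each $\equiv 1$ plus positive-degree terms equals $1$ only if each factor is $1$, which requires a short argument — so $p$ lies on no wall of $\D'$ with nontrivial function, and contributes trivially to $\p_{\gamma,\D'_k}$ too. Matching the crossings in order gives $\p_{\gamma,\D_k}=\p_{\gamma,\D'_k}$ modulo $\m^{k+1}$, and letting $k\to\infty$ finishes the proof.

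The main obstacle, and the step I would spend the most care on, is the bookkeeping in the converse direction: making precise that a generic path can be chosen so that every wall-crossing occurs at a general point, simultaneously for both diagrams, and handling points where the two diagrams' wall-sets differ. The cleanest route is probably to reduce everything to the finite diagrams $\D_k$ via Propositions~\ref{easier limit} and~\ref{just z's}, then observe that for finite diagrams the path-ordered product is literally the ordered product of the local factors $f_p(\cdot)^{\langle m,\pm n_0'\rangle}$ over the general points $p$ crossed — at which point the hypothesis $f_p(\D)=f_p(\D')$ applies factor-by-factor. The one genuinely non-formal micro-lemma buried here is that if a finite product $\prod_j g_j$ of elements $g_j\in 1+\m$ equals $1$, then each $g_j=1$; this is false in general rings but holds here because each $g_j=f_{\d_j}(\zeta^{n_{0,j}})$ is a power series in a single monomial $\zeta^{n_{0,j}}$ with these monomials associated to the \emph{same} primitive $n_0$ (all walls through $p$ are parallel), so the product lives in the power series ring in the one variable $\zeta^{n_0}$ and the claim is just that $1+\m$ has no zero divisors there — trivially true. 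I would state this observation explicitly before invoking it.
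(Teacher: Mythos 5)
First, a point of orientation: the paper does not prove this lemma at all --- it is quoted verbatim from \cite[Lemma~1.9]{GHKK} --- so there is no in-paper argument to compare yours against, and I am judging the proposal on its own terms. Your converse direction follows the standard route and is essentially sound; the bookkeeping you flag is indeed the real content. Three small repairs there: (a) the reduction to $\D_k$ modulo $\m^{k+1}$ is just the definition of the path-ordered product as a limit, and Proposition~\ref{easier limit} (which assumes consistency) is not the right tool to cite; (b) since no consistency is assumed, path-ordered products are \emph{not} invariant under perturbation of $\gamma$, so you must insist that the perturbation is small enough to preserve the exact sequence and directions of crossings of the finitely many walls of $\D_k\cup\D'_k$ (which is possible, as crossings occur transversely in relative interiors); (c) your closing ``micro-lemma'' is unnecessary --- you never need each factor to equal $1$, only the product $f_p(\D')$, and that product is exactly what enters the wall-crossing automorphism. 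You should also note that if a general crossing point lies on walls of both diagrams, then both collections of walls lie in the \emph{same} hyperplane precisely because the point is general, so the normal vectors $\pm n_0'$ in the two automorphisms agree.

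The genuine gap is in the forward direction, at the step ``choose a short generic path for both $\D$ and $\D'$ that crosses $p$ transversely once.'' Genericity explicitly forbids a path from passing through the relative boundary of any wall, so no such path exists when the general point $p$ lies in the relative boundary of a wall of $\D$ or of $\D'$ --- and such points can perfectly well be general, since a relative boundary has codimension $2$ but need not meet a second hyperplane $n^\perp$ (take, e.g., $\d\subseteq e_1^\perp$ in rank $3$ with relative boundary spanned by $e_2^*+e_3^*$). Your argument therefore only establishes $f_p(\D)=f_p(\D')$ for general $p$ avoiding all relative boundaries. This is not a removable technicality: at such points the literal statement can fail, since subdividing a wall $(n_0^\perp,f)$ into $(\d_+,f)$ and $(\d_-,f)$ with $\d_+\cup\d_-=n_0^\perp$ produces an equivalent diagram whose $f_p$ equals $f^2$ at general points of the codimension-$2$ cone $\d_+\cap\d_-$. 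So a complete write-up must either restrict the quantifier to general points not lying in any relative boundary (which is the only case used downstream, e.g.\ in the closure argument of Proposition~\ref{min sup}), or make explicit whatever convention reconciles the statement with this example; as written, your proof silently assumes the favorable case.
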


%

It is possible for a scattering diagram $\D$ to have walls, or parts of walls, that are irrelevant.
For example, a wall $(\d,f_\d)$ may have $f_\d=1$, so that the wall can be ignored in all computations of path-ordered products.
More subtly, two overlapping, parallel walls $(\d_1,f_{\d_1})$ and $(\d_2,f_{\d_2})$ might have $f_{\d_1}f_{\d_2}=1$, so that any path through their intersection would not ``see'' them.
Even more subtly, a collection $\set{(\d_i,f_{\d_i}):i\ge1}$ of parallel walls with $\bigcap_{i\ge1}\d\neq\emptyset$ can also be invisible to paths through their intersection even if no finite product of the $f_{\d_i}$ is~$1$.
As a concrete example, if all of these walls are in $n_0^\perp$, inductively choose $f_{\d_i}$ so that $f_{\d_1}\cdots f_{\d_i}=1+\zeta^{in_0}$ and thus $\prod_{i\ge1}f_{\d_i}=1$.
To avoid having irrelevant walls, we consider scattering diagrams with \newword{minimal support}, meaning that no scattering diagram $\D'$ equivalent to $\D$ has $\Supp(\D')\subsetneq\Supp(\D)$.

\begin{prop}\label{min sup}
Every consistent scattering diagram $\D$ is equivalent to a scattering diagram $\D'$ with minimal support and with the property that each $\D'_k$ has minimal support (as a scattering diagram modulo $\m^{k+1}$).
All scattering diagrams equivalent to $\D$ and having minimal support have the same support.
\end{prop}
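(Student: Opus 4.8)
The plan is to reduce the statement, via Proposition~\ref{Dk consist}, to the case of the finite consistent scattering diagram $\D_k$ modulo $\m^{k+1}$, to prove it there by directly reducing the walls, and then to assemble the resulting finite diagrams into a single $\D'$.

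First I would treat the finite case. Since $\D_k$ is finite, only finitely many hyperplanes $n_0^\perp$ contain walls of $\D_k$, and inside each such $H$ the relative boundaries of the (finitely many) walls of $\D_k$ in $H$ cut $H$ into finitely many rational cones (``faces'') that simultaneously refine all of those walls. On the relative interior of a face $F$ the product $f_p(\D_k)$ is a fixed power series modulo $\m^{k+1}$, and proceeding recursively on decreasing dimension I can pick power series $h_F$ so that $\prod_{G \supseteq F} h_G$ (the product over faces $G$ containing $F$) equals $f_p(\D_k)$ for $p$ in the relative interior of $F$. Let $\E^{(k)}$ be the scattering diagram whose walls are the pairs $(F, h_F)$ with $h_F \not\equiv 1$ modulo $\m^{k+1}$. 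Then $f_p(\E^{(k)}) = f_p(\D_k)$ for all general $p$, so $\E^{(k)}$ is equivalent to $\D_k$ by Lemma~\ref{lem1.9}, and it is consistent because equivalent scattering diagrams are simultaneously consistent (as in the proof of Proposition~\ref{Dk consist}, one may take all paths to have endpoints outside both supports). Next I would show that $\Supp(\E^{(k)})$ is contained in $\Supp(\D'')$ for \emph{every} $\D''$ equivalent to $\D_k$: if some general point $p$ of a wall $(F, h_F)$ of $\E^{(k)}$ lay outside $\Supp(\D'')$, then $f_q(\D'') = f_q(\E^{(k)}) = 1$ for all general $q$ near $p$, and running the above recursion in reverse --- using that $\Supp(\D'')$ is closed because $\D''$ is finite modulo $\m^{k+1}$ --- would force every $h_G$ with $G \supseteq F$, in particular $h_F$, to be trivial, a contradiction. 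This shows $\E^{(k)}$ has minimal support and that every minimal-support diagram equivalent to $\D_k$ has support exactly $\Supp(\E^{(k)})$.

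Then I would assemble. Because $\D_k \subseteq \D_{k+1}$, the faces used at level $k+1$ in each $H$ refine those at level $k$, the series are compatible under reduction modulo $\m^{k+1}$, and $\Supp(\E^{(k)}) \subseteq \Supp(\E^{(k+1)})$. I would define $\D'$ by a telescoping construction: the walls of $\D'$ are the walls of the various $\E^{(k)}$, but the function on a wall first appearing at level $k$ is replaced by that function divided by the product of the functions of $\E^{(k-1)}$ on the faces of the previous level's subdivision containing it, arranged so that the product of the wall-functions of $\D'$ through any general point $p$ telescopes to $\lim_k f_p(\D_k) = f_p(\D)$. Then $\D'_k$ involves only walls introduced at levels $\le k$ and hence is finite, so $\D'$ is a genuine scattering diagram; $\D' \sim \D$ by Lemma~\ref{lem1.9}; each $\D'_k$ is equivalent to $\D_k$, so $\D'$ is consistent by Proposition~\ref{Dk consist}; each $\D'_k$ has support $\Supp(\E^{(k)})$ and hence minimal support; and $\Supp(\D') = \bigcup_k \Supp(\E^{(k)})$. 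Finally, reducing any $\D''$ equivalent to $\D$ modulo each $\m^{k+1}$ and applying the finite case gives $\Supp(\D') = \bigcup_k \Supp(\E^{(k)}) \subseteq \Supp(\D'')$, so $\D'$ has minimal support; and if $\D''$ also has minimal support, the inclusion $\Supp(\D') \subseteq \Supp(\D'')$ cannot be strict, so $\Supp(\D'') = \Supp(\D')$.

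The step I expect to be the main obstacle is the minimality assertion in the finite case --- ruling out the subtle cancellations among overlapping or accumulating parallel walls discussed just before the proposition. This is exactly where the hypothesis of consistency (which forces $f_p(\D)$ to behave coherently) and the closedness of the support of a finite diagram do the essential work; the passage to the limit and the uniqueness in the last step are then formal.
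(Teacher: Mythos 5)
Your proposal is correct and follows essentially the same route as the paper: construct, level by level in $k$, a diagram whose support at level $k$ is the closure of the locus of general points with $f_p(\D)\neq 1$ modulo $\m^{k+1}$, assemble these by a telescoping union, and deduce minimality from Lemma~\ref{lem1.9} together with the closedness of the (finite) supports $\Supp(\D''_k)$. The paper's version is just a more compressed form of the same construction (at each step it adds finitely many walls realizing the ratio $f_p(\D_k)/f_p(\D'_{k-1})$ rather than rebuilding $\E^{(k)}$ from scratch), so no substantive difference.
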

\begin{proof}  
We will construct a scattering diagram $\D'$ equivalent to $\D$ such that, for each $k\ge1$, $\Supp(\D'_k)$ is the closure of the set of general points $p$ such that $f_p(\D)\neq1$ modulo $\m^{k+1}$.
We begin by taking $\D'_0$ to be the empty scattering diagram.
Now for each $k$, suppose that $\D'_{k-1}$ has been defined to be equivalent to $\D_{k-1}$ and has $\Supp(\D'_{k-1})$ as desired.
Then the function $p\mapsto f_p(\D_k)/f_p(\D'_{k-1})$ modulo $\m^{k+1}$ on general points $p$ is determined by finitely many walls (the walls of $\D_k$ and the walls of $\D'_{k-1}$.
Thus we can add finitely many walls to $\D'_{k-1}$ to define $\D'_k$ with support as desired.
The union over $k\ge1$ of the $\D'_k$ is the desired $\D'$.
This is equivalent to $\D$ because it is equivalent modulo $\m^{k+1}$ for every $k$.
If $\D''$ is equivalent to $\D$ and has minimal support, then Lemma~\ref{lem1.9} says that $f_p(\D'')=f_p(\D)$ for all general points.
In particular, $\Supp(\D''_k)$ contains all points where $f_p(\D)\neq1$ modulo $\m^{k+1}$, and thus $\Supp(\D''_k)$ contains $\Supp(\D'_k)$.
Therefore $\Supp(\D'')$ contains $\Supp(\D')$.
\end{proof}

We will need the following technical lemmas.
Given a scattering diagram $\D$ and $n_0\in N^+$, the \newword{rampart} of~$\D$ associated to $n_0$ is the union of all walls of $\D$ contained in $n_0^\perp$.
Note that we do not assume consistency of $\D$ in the following lemma.

\begin{lemma}\label{tech lem}
Suppose that $\D$ has minimal support, that each $\D_k$ has minimal support (as a scattering diagram modulo $\m^{k+1}$), and that $\gamma$ is a generic path in $\D$ that crosses no rampart of $\D$ more than once.
Then $\p_{\gamma,\D}$ is the identity if and only if $\gamma$ crosses no wall in $\D$.
\end{lemma}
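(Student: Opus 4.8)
The plan is to prove the contrapositive of the nontrivial direction; the other direction is immediate, since if $\gamma$ crosses no wall then $\p_{\gamma,\D}$ is an empty composition of wall-crossing automorphisms, hence the identity. So suppose $\gamma$ crosses at least one wall of $\D$, and aim to show $\p_{\gamma,\D}\neq\id$. First I would fix $k\ge1$ least such that $\gamma$ crosses a wall of $\D_k$. Each wall of $\D_k$ crossed by $\gamma$ then lies in $\D_k\setminus\D_{k-1}$, so its function is $\equiv1$ modulo $\m^k$ but $\not\equiv1$ modulo $\m^{k+1}$; its lowest nonconstant term has degree exactly $k$. Since $\gamma$ is generic and crosses no rampart of $\D$ more than once, the walls of $\D_k$ that $\gamma$ crosses are crossed at distinct times $t_1<\cdots<t_m$, the walls crossed at a fixed time $t_j$ all lie in one hyperplane $H_j=n_j^\perp$ with $n_j\in N^+$ primitive, and $H_1,\dots,H_m$ are pairwise distinct. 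Writing $g_j$ for the product of the functions of the walls of $\D_k$ crossed at $t_j$, the combined wall-crossing $\Phi_j$ at $t_j$ sends $z^m\mapsto z^m g_j^{\epsilon_j\br{m,n_j'}}$ for $m\in M^\circ\cap V^*$, where $n_j'\in N^+$ is primitive in $N^\circ$ and $\epsilon_j=\pm1$; and modulo $\m^{k+1}$ one has $g_j\equiv 1+c_j\zeta^{\ell_j n_j}$ for a scalar $c_j$, with $\ell_j=k/|n_j|$, because every degree-$k$ term of every wall in $H_j$ is a scalar multiple of the monomial $(\zeta^{n_j})^{\ell_j}$.

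The crux is to show $c_j\neq0$ for each $j$, and this is the one place the minimal-support hypothesis is needed. Fix $j$ and a wall $\d$ of $\D_k$ in $H_j$ crossed by $\gamma$, so $\gamma(t_j)$ lies in the relative interior of $\d$. I would argue that a small enough neighborhood $U$ of $\gamma(t_j)$ in $V^*$ meets only walls of $\D_k$ that lie in $H_j$ and contain $\gamma(t_j)$ in their relative interior (using each clause of genericity), whence $f_p(\D_k)=g_j$ for general $p\in U$ lying in $H_j$ near $\gamma(t_j)$ and $f_p(\D_k)=1$ for general $p\in U\setminus H_j$. If $c_j=0$, then $f_p(\D_k)\equiv1$ modulo $\m^{k+1}$ for all general $p$ in a neighborhood of $\gamma(t_j)$, so $\gamma(t_j)$ is not in the closure of $\{p\text{ general}:f_p(\D_k)\not\equiv1\bmod\m^{k+1}\}$; but that closure equals $\Supp(\D_k)$ because $\D_k$ has minimal support (the construction in the proof of Proposition~\ref{min sup} applies verbatim to the finite diagram $\D_k$ and does not use consistency), contradicting $\gamma(t_j)\in\d\subseteq\Supp(\D_k)$. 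Hence $c_j\neq0$.

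With the $c_j$ known to be nonzero, the conclusion follows from a degree-$k$ computation modulo $\m^{k+1}$. For $j'>j$, the automorphism $\Phi_{j'}$ changes $\zeta^{n_j}$ only by a factor in $1+\m^k$ (because $g_{j'}\equiv1$ modulo $\m^k$), hence fixes $(\zeta^{n_j})^{\ell_j}$ modulo $\m^{k+1}$; so conjugating $g_j$ by all later crossings leaves it $\equiv g_j$. Therefore, for $m\in M^\circ\cap V^*$,
\[
\p_{\gamma,\D_k}(z^m)\big/z^m\ \equiv\ \prod_{j=1}^m g_j^{\,\epsilon_j\br{m,n_j'}}\ \equiv\ 1+\sum_{j=1}^m \epsilon_j\br{m,n_j'}\,c_j\,\zeta^{\ell_j n_j}\pmod{\m^{k+1}},
\]
the cross terms vanishing since a product of two degree-$k$ monomials is $\equiv0$. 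The monomials $\zeta^{\ell_j n_j}$, $j=1,\dots,m$, are pairwise distinct, because the primitive vectors $n_j$ are pairwise non-proportional. Hence if $\p_{\gamma,\D_k}(z^m)\equiv z^m$ for all $m\in M^\circ\cap V^*$, then $\br{m,n_j'}c_j=0$ for all such $m$ and all $j$, which is impossible since $c_j\neq0$ and $n_j'\neq0$. Thus $\p_{\gamma,\D_k}\not\equiv\id$ modulo $\m^{k+1}$; since $\p_{\gamma,\D}$ agrees with $\p_{\gamma,\D_k}$ modulo $\m^{k+1}$ (walls outside $\D_k$ act trivially there), we conclude $\p_{\gamma,\D}\neq\id$.

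I expect the main obstacle to be the middle paragraph: ``minimal support'' is a global condition about equivalent scattering diagrams, and one has to convert it into the concrete statement that the product $g_j$ of the wall functions met at each crossing point is nontrivial. The rampart hypothesis then does exactly one job — it forces the lowest-degree contributions at different crossings to live in distinct monomials $\zeta^{\ell_j n_j}$, so no cancellation can occur in the degree-$k$ term of the path-ordered product — and the remaining filtration bookkeeping is routine.
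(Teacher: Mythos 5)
Your proof is correct, and it shares the paper's skeleton (take $k$ minimal such that $\gamma$ crosses a wall of $\D_k$; use minimal support of $\D_k$ to get a nonzero leading coefficient at a crossing; use the rampart hypothesis to rule out cancellation) but executes the final step differently. The paper, after producing one crossing whose product of wall functions is $1+c\zeta^{n_0}+\cdots$ with $c\neq0$, runs a bookkeeping argument with the componentwise order on $N^+\cup\set{0}$: it tracks which monomials $\zeta^n$, $n\in[0,n_0]$, can appear or disappear as the remaining walls are crossed, using the rampart hypothesis to guarantee that $\zeta^{n_0}$ is not immediately cancelled and then shrinking the interval. You instead exploit the minimality of $k$ more aggressively: every wall of $\D_k$ crossed by $\gamma$ lies in $\D_k\setminus\D_{k-1}$, so its function is $1$ modulo $\m^k$, and hence modulo $\m^{k+1}$ the entire path-ordered product linearizes to $1+\sum_j\epsilon_j\br{m,n_j'}c_j\zeta^{\ell_jn_j}$ with all cross terms dead in $\m^{2k}\subseteq\m^{k+1}$; the rampart hypothesis then enters only to make the monomials $\zeta^{\ell_jn_j}$ pairwise distinct, so the sum cannot vanish. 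This is shorter and, in my view, more transparent for the lemma as stated; the paper's interval argument is more robust in that it would still work if the crossed walls could contribute terms of degree below $k$ (it needs only the first nontrivial crossing and the no-recrossing condition), but that generality is not needed here. You are also more careful than the paper on the one delicate point, namely deriving $c_j\neq0$ from minimal support of $\D_k$ via the local computation of $f_p(\D_k)$ near $\gamma(t_j)$ (the paper simply asserts $c\neq0$); your observation that the support-characterization from Proposition~\ref{min sup} applies to the finite diagram $\D_k$ is the right justification. Two cosmetic points: you use $m$ both for the number of crossing times and for elements of $M^\circ$, and your conclusion is phrased for the action on $z^m$ with $m\in M^\circ\cap V^*$ rather than on the $\zeta_i$ — to conclude that the automorphism of $\k[[\zeta]]$ itself is not the identity, run the same display with $v_i$ in place of $m$ and choose $i$ with $\br{v_i,n_1'}\neq0$ (possible since the $v_i$ are linearly independent), exactly as the paper does.
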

\begin{proof}
The ``if'' direction holds by definition.
To prove the ``only if'' direction, suppose $\gamma$ crosses a wall of $\D$.
Let $k$ be minimal such that $\gamma$ crosses a wall of $\D_k$.
Then $\gamma$ crosses (necessarily all at the same time) some collection of parallel walls such that the product $f$ of the functions on the wall is $1+c\zeta^{n_0}$ plus higher-order terms in $\zeta^{n_0}$ for some $c\neq 0$ and $n_0\in N^+$ such that the total degree of $\zeta^{n_0}$ is $k$.

For each $i\in I$, we have $\p_{\gamma,\d}(\zeta_i)=\zeta_if_\d^{\br{v_i,\pm n_0}}$, with the sign in $\pm n_0$ chosen as explained in the definition.
Since the $v_i$ are linearly independent, we can choose $i\in I$ such that $\br{v_i,\pm n_0}\neq0$, so that $\p_{\gamma,\d}(\zeta_i)=\zeta_i(1+a\zeta^{n_0}+\cdots)$, where $a$ is $c\br{v_i,\pm n_0}\neq0$ and the ``$\cdots$'' represents higher-order terms in $\zeta^{n_0}$.

We partially order $N^+\cup\set{0}$ componentwise according to the basis $(e_i:i\in I_\uf)$ and consider the closed interval $[0,n_0]$ in this order.
We observe that $\zeta_i^{-1}\p_{\gamma,\D_k}(\zeta_i)\in \k[[\zeta]]$ and consider for which vectors $n\in[0,n_0]$ the monomial $\zeta^n$ appears with nonzero coefficient in each step in evaluating $\zeta_i^{-1}\p_{\gamma,\D_k}(\zeta_i)$.
Before any walls are crossed, only the monomial $\zeta^0=1$ appears, and then the first change that occurs is to insert the monomial $a\zeta^{n_0}$.
This monomial could be immediately removed if $\gamma$ again crossed a wall in $n_0^\perp$, but since $\gamma$ does not cross any rampart of $\D$ twice, that does not happen.
The only change that can occur next is to pick up a term $\zeta^{n_1}$ for some $n_1$ strictly between $0$ and $n_0$.
Once we have this new term, it is possible to lose the term $\zeta^{n_0}$.
However, continuing onward, we consider the smaller interval $[0,n_1]$, first noticing that the next change cannot be to lose the term $\zeta^{n_1}$.
Proceeding in this manner, we conclude that $\zeta_i^{-1}\p_{\gamma,\D_k}(\zeta_i)\neq 1$, so that $\p_{\gamma,\D_k}$ is not the identity map on $\k[[\zeta]]/\m^{k+1}$.
But $\p_{\gamma,\D_k}$ agrees with $\p_{\gamma,\D}$ modulo $\m^{k+1}$, so $\p_{\gamma,\D}$ is not the identity.
\end{proof}

We make a slightly more detailed statement of Lemma~\ref{tech lem} in a special case.
\begin{lemma}\label{tech lem detailed}
Suppose $\D$ has minimal support, that each $\D_k$ has minimal support (as a scattering diagram modulo $\m^{k+1}$), and that $\gamma$ is a generic path in $\D$ that is a line segment.
If $\gamma$ crosses a wall contained in $n_0^\perp$ for some $n_0\in N^+$ and if $m\in M^\circ\setminus n_0^\perp$, then $\p_{\gamma,\D}(z^m)\neq z^m$.
\end{lemma}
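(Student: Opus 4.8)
The plan is to run essentially the same monomial-tracking argument as in the proof of Lemma~\ref{tech lem}, but now tracking the effect of the path-ordered product on $z^m$ rather than on $\zeta_i$, and exploiting the hypothesis $m \notin n_0^\perp$ in place of the linear independence of the $v_i$. First I would reduce to working modulo $\m^{k+1}$ for the minimal $k$ such that $\gamma$ crosses a wall of $\D_k$. Among the walls of $\D_k$ that $\gamma$ crosses, let $n_0 \in N^+$ be such that $\gamma$ crosses a wall in $n_0^\perp$ (the hypothesis gives us a specific such $n_0$), and note that the walls in $n_0^\perp$ crossed by $\gamma$ are all crossed simultaneously, with the product $f$ of their functions equal to $1 + c\zeta^{n_0} + (\text{higher order in } \zeta^{n_0})$ for some $c \neq 0$, where $\zeta^{n_0}$ has total degree exactly $k$.

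Next I would use the reinterpretation of path-ordered products from Proposition~\ref{just z's} and the discussion preceding it: crossing the rampart in $n_0^\perp$ sends $z^m$ to $z^m f^{\br{m, \pm n_0'}}$, where $n_0'$ is the positive multiple of $n_0$ primitive in $N^\circ$, so $\br{m, \pm n_0'} = \pm d \br{m, n_0}$ for the appropriate positive integer $d$. Since $m \notin n_0^\perp$, we have $\br{m, n_0} \neq 0$, hence $z^{-m}\p_{\gamma,\d}(z^m) = f^{\br{m,\pm n_0'}} = 1 + a\zeta^{n_0} + \cdots$ with $a = c\br{m, \pm n_0'} \neq 0$ and the ``$\cdots$'' higher order in $\zeta^{n_0}$. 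Now I would invoke that $\gamma$ is a line segment: a line segment meets each hyperplane $n_0^\perp$ in at most one point (unless it lies inside, which genericity forbids), so $\gamma$ crosses no rampart of $\D$ more than once. Therefore, once the monomial $a\zeta^{n_0}$ has been introduced into $z^{-m}\p_{\gamma,\D_k}(z^m)$, it cannot be removed by a later crossing of a wall in $n_0^\perp$.

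The remaining work is the same combinatorial argument as in Lemma~\ref{tech lem}: partially order $N^+ \cup \{0\}$ componentwise in the basis $(e_i : i \in I_\uf)$, consider the interval $[0, n_0]$, observe that the first change to $z^{-m}\p_{\gamma,\D_k}(z^m)$ is to insert $a\zeta^{n_0}$, that this term cannot be immediately cancelled (no second crossing of the rampart), that the next change can only be to pick up some $\zeta^{n_1}$ with $0 < n_1 < n_0$, and then recurse on $[0, n_1]$; since $[0, n_0]$ is finite, this terminates with $z^{-m}\p_{\gamma,\D_k}(z^m) \neq 1$. Hence $\p_{\gamma,\D_k}(z^m) \neq z^m$ modulo $\m^{k+1}$, and since $\p_{\gamma,\D_k}$ agrees with $\p_{\gamma,\D}$ modulo $\m^{k+1}$, we get $\p_{\gamma,\D}(z^m) \neq z^m$. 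The main obstacle I anticipate is bookkeeping: making sure that all the walls in other ramparts $n^\perp$ (for $n \neq n_0$ proportional to nothing relevant) that $\gamma$ crosses before or after the $n_0$-rampart cannot conspire to cancel the $\zeta^{n_0}$ term or the subsequently introduced lower terms; this is handled exactly as in Lemma~\ref{tech lem} by the observation that each such cancellation would require re-crossing a rampart, which the line-segment hypothesis rules out, but it deserves a careful sentence rather than being swept under the rug.
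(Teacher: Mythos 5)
Your proposal is correct and is essentially the paper's proof, which simply says to rerun the argument of Lemma~\ref{tech lem} with $z^{-m}\p_{\gamma,\D_k}(z^m)$ in place of $\zeta_i^{-1}\p_{\gamma,\D_k}(\zeta_i)$, using $\br{m,n_0}\neq 0$ where that argument used $\br{v_i,n_0}\neq 0$. Your added observation that a generic line segment crosses each rampart at most once (supplying the hypothesis of Lemma~\ref{tech lem}) is exactly the point the paper leaves implicit; the only remaining care is that $k$ should be chosen minimal subject to $\gamma$ crossing a wall of $\D_k$ lying in the \emph{given} hyperplane $n_0^\perp$, a wrinkle the paper's one-line proof also glosses over.
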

\begin{proof}
In the proof of Lemma~\ref{tech lem}, consider $z^{-m}\p_{\gamma,\D_k}(z^m)$ instead of $\zeta_i^{-1}\p_{\gamma,\D_k}(\zeta_i)\in \k[[\zeta]]$ and conclude that it is not equal to $1$.
\end{proof}

\begin{lemma}\label{bare point}
Suppose that $\D$ is a consistent scattering diagram with minimal support, that $p$ is a point in a rampart $R$ of $\D$, and that $v\in V^*$ has the property that there exists $E>0$ such that $p+\ep v\not\in R$ whenever $E>\ep>0$.
Then $p$ is contained in some wall of $\D$ whose hyperplane does not contain $p+v$.
\end{lemma}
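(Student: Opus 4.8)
The plan is to reduce quickly to the substantive case, and then to derive a contradiction from a carefully chosen loop, playing consistency against minimal support via Lemma~\ref{tech lem detailed}.

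First I would dispose of the easy case. Let $R$ be the rampart of $\D$ associated to the primitive $n_0\in N^+$. If $v\notin n_0^\perp$, then any wall $\d$ of $\D$ with $\d\subseteq n_0^\perp$ and $p\in\d$ works, since $p\in n_0^\perp$ while $\br{p+v,n_0}=\br{v,n_0}\neq0$, so $p+v$ is not in the hyperplane $n_0^\perp$ of $\d$. So assume $v\in n_0^\perp$, and suppose for contradiction that the hyperplane of every wall of $\D$ through $p$ contains $p+v$; since $p$ lies in each such hyperplane, this says that every wall $\d$ of $\D$ with $p\in\d$ lies in some $n_\d^\perp$ with $\br{v,n_\d}=0$. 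Fix $k$ large enough that $\D_k$ contains a wall inside $n_0^\perp$ containing $p$; recall $\D_k$ is finite with minimal support. Let $n^{(1)}=n_0,\dots,n^{(r)}$ be the distinct primitive normals in $N^+$ of the walls of $\D_k$ through $p$; then $\br{v,n^{(j)}}=0$ for all $j$, so $p+\epsilon v\in n_0^\perp$ for every $\epsilon$, and the hypothesis says precisely that $p$ is not an interior point (within $n_0^\perp$) of the rampart $R_k$ of $\D_k$ associated to $n_0$.

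The heart of the matter is a local structural fact: by consistency of $\D$ applied in a small ball about $p$, the relative boundary at $p$ of every wall of $\D_k$ in $n_0^\perp$ through $p$ lies in $n^{(2)\perp}\cup\cdots\cup n^{(r)\perp}$, and hence, near $p$, $R_k$ is a union of chambers of the hyperplane arrangement $\set{n^{(j)\perp}\cap n_0^\perp:2\le j\le r}$ inside $n_0^\perp$. As $p$ is not interior to $R_k$ in $n_0^\perp$, there is a chamber $C$ of this arrangement not contained in $R_k$ that is adjacent, across exactly one hyperplane $n^{(j_*)\perp}$, to a chamber $C''$ contained in some wall $\d_1\subseteq n_0^\perp$ of $\D_k$ through $p$. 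Now pick a generic direction $w$ transverse to $n_0^\perp$, a point $q_0$ with $q_0-p$ in the interior of $C$ (close to the shared facet, away from the other facets of $C$, and with $q_0\notin\Supp(\D_k)$), a point $x\in\relint(\d_1)$ with $x-p$ in the interior of $C''$ (placed similarly, a general point), and a small $\xi>0$. Consider the quadrilateral $\gamma$ with vertices $x-\xi w,\ x+\xi w,\ q_0+\xi w,\ q_0-\xi w$ in cyclic order, with line-segment edges. By consistency of $\D_k$, $\p_{\gamma,\D_k}$ is the identity. The edge $x-\xi w\to x+\xi w$ crosses only the rampart, at $x$, hence sends $z^m$ to $z^m f_x(\D)^{\br{m,\pm n_0'}}$, and by minimality of $\D_k$ one has $f_x(\D)\not\equiv1\pmod{\m^{k+1}}$. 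For $\xi$ small, the edge $q_0+\xi w\to q_0-\xi w$ crosses $n_0^\perp$ only at $q_0\notin R_k$ and crosses no other wall, so it is trivial; and the remaining two edges cross walls only in the single hyperplane $n^{(j_*)\perp}$ separating $C$ from $C''$. Hence the composite of those three edges, which by consistency is the inverse of the rampart edge, sends $z^m$ to $z^m$ times a power series in $\zeta^{n^{(j_*)}}$ alone; comparing with $z^m\mapsto z^m f_x(\D)^{-\br{m,\pm n_0'}}$ for an $m\in M^\circ$ with $\br{m,n_0}\neq0$, and using that $n_0$ and $n^{(j_*)}$ are not proportional, the two power series share only the monomial $1$, which forces $f_x(\D)\equiv1\pmod{\m^{k+1}}$ — a contradiction.

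I expect the genuine obstacle to be the local structural fact above: that near $p$ the ramparts through $p$ are bounded only by the other wall-hyperplanes through $p$, so that $R_k$ is locally a union of chambers of $\set{n^{(j)\perp}\cap n_0^\perp}$. This should follow from consistency of $\D$ by localizing $\D_k$ at $p$ and inducting on $\rank N_\uf$: a facet of a rampart wall meeting the relative interior of that wall at $p$ would, by consistency around it, force other walls of $\D_k$ through $p$ whose hyperplanes contain that facet, and these are among the $n^{(j)\perp}$. Everything else — the genericity and orders-of-magnitude bookkeeping ensuring that the four edges of $\gamma$ meet exactly the intended walls and nothing else, and that $\gamma$ is generic for $\D_k$ — is routine once that fact is in hand.
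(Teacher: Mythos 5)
Your overall strategy --- derive a contradiction from consistency of $\D_k$ using a small loop near $p$ that picks up exactly one non-trivial rampart crossing --- is the right idea, and your reduction to the case $v\in n_0^\perp$ matches the paper. But the proof is not complete: everything hinges on your ``local structural fact'' (that near $p$ the rampart $R_k$ is a union of closed chambers of the arrangement $\set{n^{(j)\perp}\cap n_0^\perp}$, equivalently that the relative boundary of $R_k$ at $p$ is covered by the other wall hyperplanes through $p$), and you only gesture at a proof of it. That fact is essentially a local restatement of the lemma you are trying to prove --- your own sketch of it (``a facet of a rampart wall... would, by consistency around it, force other walls of $\D_k$ through $p$'') is exactly the consistency-loop argument that constitutes the paper's entire proof --- so deferring it leaves the main content unestablished. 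The paper shows the chamber decomposition is unnecessary: one takes a general point $q$ of a wall of $R_k$ near $p$ with $f_q\neq 1$ (minimal support), takes $r\in H\setminus R$ near $p$ (which exists by the hypothesis on $v$, e.g.\ $r=p+\ep v$), and runs a loop through $H$ only at $q$ and $r$, staying close enough to $\seg{qr}$ to miss every wall not contained in $H$ --- possible precisely because, under the contradiction hypothesis, every wall of $\D_k$ near $p$ not in $H$ lies in a hyperplane containing the direction $v$. The loop then has a single non-identity contribution, contradicting consistency, with no need for the quadrilateral or the power-series comparison.

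A secondary gap: even granting the structural fact, your final comparison assumes the three non-rampart edges contribute a power series in $\zeta^{n^{(j_*)}}$ ``alone.'' Two distinct primitive normals $n^{(j_1)},n^{(j_2)}$ can have the same trace $n^{(j_1)\perp}\cap n_0^\perp=n^{(j_2)\perp}\cap n_0^\perp$, so the side edges may cross walls with several normals; since $n_0,n^{(j_1)},n^{(j_2)}$ then span a plane and all lie in $N^+$, a positive integer combination $an^{(j_1)}+bn^{(j_2)}$ can equal a multiple of $n_0$, and then the assertion that ``the two power series share only the monomial $1$'' fails as stated. This is repairable with a more careful leading-term argument, but as written it is a hole on top of the unproved structural claim.
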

\begin{proof}
By Proposition~\ref{min sup}, up to equivalence and without changing the support of~$\D$, we assume that each $\D_k$ also has minimal support.

Suppose for the sake of contradiction that every wall containing $p$ is in a hyperplane containing $p+v$.
In particular, the hyperplane of $R$ contains $p+v$, and the hypothesis on $v$ implies that $p$ is not in the relative interior of $R$ (the interior of $R$ relative to the hyperplane containing it).
Since $p\in R$, there exists $k$ such that $p$ is in a wall of $\D_k$.
Since $\D_k$ is finite, we can choose $\ep$ with $E>\ep>0$ small enough so that the $\ep$-ball $\B$ about $p$ does not intersect any wall of $\D_k$ not containing $p$.
Because $p$ is in a wall $\d$ of $\D_k$ contained in $R$, the ball $\B$ contains a point $q$ in $\relint(\d)$ not contained in any wall of $\D_k$ not parallel to $\d$ and not contained in the relative boundary of any wall of $\D_k$ parallel to $\d$.
Since $\D_k$ has minimal support, the product $f=\prod_{\d'}f_\d$ over all $\d'$ containing $q$ has $f\neq 1$.
Because $p$ is not in the relative interior of $R$, the ball $\B$ also contains a point $r\in H\setminus R$, where $H$ is the hyperplane containing $R$.
Furthermore, since every wall of $\D_k$ intersecting $\B$ is in a hyperplane containing $p+v$, we can choose $r$ such that the segment $\seg{qr}$ does not cross any walls not in $H$.
Now consider a loop $\gamma$ contained in $\B$ that passes through $H$ at $q$ and $r$ and that stays close enough to $\seg{qr}$ that it passes through no walls of $\D_k$ not contained in $H$.
There is exactly one non-identity contribution to $\p_{\gamma,\D_k}$, namely at $q$, so $\p_{\gamma,\D_k}$ is not the identity.
This contradicts the the consistency of $\D_k$ and thus (by Proposition~\ref{Dk consist}) the consistency of $\D$.
This contradiction proves that $p$ is contained in some wall of $\D$ whose hyperplane does not contain $p+v$.
\end{proof}

%

\begin{remark}\label{useless dimensions}
As mentioned in Remark~\ref{useless dimensions teaser}, we construct the scattering diagram in $V^*$ (the real span of the set $\set{e_i^*:i\in I_\uf}$), while \cite{GHKK} constructs the scattering diagram in a larger space, namely $M\otimes\reals$ (the real span of $\set{e_i^*:i\in I}$).
Suppose we were to construct the scattering diagram in $M\otimes\reals$ instead.
Each wall in a scattering diagram (here and in \cite{GHKK}) is contained in a hyperplane perpendicular to a vector in $N^+\subset N_\uf$, and each such hyperplane contains the real span of $\set{e_i^*:i\in I_\fr}$.
Using simple arguments similar to the proof of Lemma~\ref{bare point}, we see that, up to equivalence of scattering diagrams, we could take each wall in a \emph{consistent} scattering diagram to be the direct product of a cone in $V^*$ with the real span of $\set{e_i^*:i\in I_\fr}$.
Since we only care about consistent scattering diagrams, making the construction in $V^*$ is a matter of taking a quotient modulo some unnecessary dimensions.
\end{remark}

\begin{remark}\label{coeffs don't matter remark}
Once we choose the exchange matrix ${[\epsilon_{ij}]_{i,j\in I_\uf}}$, fix $V$, fix a basis ${(e_i:i\in I_\uf)}$ for $V$ and take $N_\uf$ to be the lattice generated by $\set{e_i:i\in I_\uf}$, the choice of a superlattice $N$ and the choice of an extension of $\set{\,\cdot\,,\,\cdot\,}$ to $N\times N$ corresponds to a choice of coefficients in the sense of cluster algebras of geometric type \cite[Section~2]{ca4}.
Thus the unnecessary dimensions mentioned in Remark~\ref{useless dimensions} are dual to the dimensions where the coefficients live.
Proposition~\ref{coeffs don't matter} shows that, because we left out the unnecessary dimensions, the choice of coefficients commutes with the construction of consistent scattering diagrams.
\end{remark}

\section{Scattering fans}\label{scatfan sec}
In the introduction, we asserted that a consistent scattering diagram ``cuts the ambient vector space into a complete fan.''
We now make that assertion precise and prove it.
We begin by recalling some basic definitions.

A \newword{convex cone} in a real vector space is a subset that is convex and closed under positive scaling and under addition.
(In fact, if the subset is convex and closed under positive scaling, then it is closed under addition.)
The \newword{relative interior} $\relint(C)$ of a convex set $C$ is its interior as a subset of its linear span.
A convex set is \newword{relatively open} if it is open as a subset of its linear span, or equivalently if it equals its relative interior.
The \newword{relative boundary} of a convex set is its boundary as a subset of its linear span.
A subset $F$ of a closed convex set $C$ is a \newword{face} if it is convex and has the property that any line segment $L$ contained in $C$ whose relative interior intersects $F$ has $L\subseteq F$.
(There is a better-known definition of faces of a closed convex \emph{polyhedron}, but we use this more general definition because we don't know that the cones we consider are polyhedral.)
A face $F$ of a closed convex cone $C$ is again a closed convex cone, and furthermore a subset of $F$ is a face of $F$ if and only if it is a face of $C$.
A \newword{fan} is a collection of closed convex cones that is closed under passing to faces and that has the property that, given any two cones $C$ and $D$ in the fan, the intersection $C\cap D$ is a face of $C$ and a face of $D$.
A fan is \newword{complete} if the union of its cones is the entire ambient vector space.

Now let $\D$ be a scattering diagram.
Recall that the \newword{rampart} of~$\D$ associated to $n_0\in N^+$ is the union of all walls of $\D$ contained in $n_0^\perp$.
Given $\D$ and $p\in V^*$, write $\Ram_{\D}(p)$ for the set of ramparts of $\D$ containing $p$ and write $\D\setminus\Ram_{\D}(p)$ for the set of walls of $\D$ \emph{not} contained in any rampart in $\Ram_{\D}(p)$.

Suppose $\D$ is a consistent scattering diagram with minimal support.
We define an equivalence relation on $V^*$ by declaring $p,q\in V^*$ to be \newword{$\D$-equivalent} if and only if there is a path $\gamma$ from $p$ to $q$ on which $\Ram_\D(\,\cdot\,)$ is constant.
In other words, ${p,q\in V^*}$ are $\D$-equivalent if and only if, first, $\Ram_\D(p)=\Ram_\D(q)$ and second, $p$ and $q$ are in the same path-connected component of $(\cap\Ram_\D(p))\setminus(\Supp(\D\setminus\Ram_\D(p)))$.
If $\Ram_\D(p)=\emptyset$, then by convention $\cap\Ram_\D(p)=V^*$, so points $p,q\in V^*\setminus\Supp(\D)$ are equivalent if and only if they are in the same path-connected component of $V^*\setminus\Supp(\D)$.
A \newword{$\D$-class} is a $\D$-equivalence class.

The closure of a $\D$-class is called a \newword{$\D$-cone}.
Define $\Fan(\D)$ to be the set of $\D$-cones together with their faces.
The $\D$-cones and $\Fan(\D)$ depend on $\D$ only up to equivalence as long as $\D$ has minimal support.  
We call $\Fan(\D)$ a \newword{scattering fan}.
Our goal in this section is to prove the following result.

\begin{theorem}\label{scat fan}
If $\D$ is a consistent scattering diagram with minimal support, then $\Fan(\D)$ is a complete fan in~$V^*$.
\end{theorem}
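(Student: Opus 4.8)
The plan is to verify directly the defining conditions of a complete fan for $\Fan(\D)$: that every member is a closed convex cone, that the collection is closed under passing to faces, that the intersection of any two members is a face of each, and that the members cover $V^*$. Two of these cost almost nothing. Closure under faces is built into the definition of $\Fan(\D)$ together with the fact, recalled just before the statement, that a face of a face of a closed convex cone is a face of it. Completeness is equally quick: $\D$-equivalence is an equivalence relation on $V^*$, so every point lies in a $\D$-class, hence in a $\D$-cone, and already the $\D$-cones cover $V^*$. Thus the substance of the proof is that each $\D$-cone is a closed convex cone, and that $\D$-cones (and then faces of $\D$-cones) meet along common faces.

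I would treat the cone condition first. Every wall is a rational cone, hence stable under positive scaling, so the same holds for every rampart and for each set $\Supp(\D\setminus\Ram_\D(p))$; therefore $\Ram_\D(tp)=\Ram_\D(p)$ for $t>0$, and the straight ray from $p$ to $tp$ keeps $\Ram_\D$ constant, putting $p$ and $tp$ in one $\D$-class. So $\D$-classes, and their closures, are stable under positive scaling. The geometric heart of the theorem is convexity of a $\D$-class $C$. Writing $S$ for the common value of $\Ram_\D$ on $C$, I claim that for $p,q\in C$ the straight segment $\seg{pq}$ also has $\Ram_\D\equiv S$, and hence, being connected, lies in $C$. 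Suppose not. Since every rampart in $S$ lies in a hyperplane and $p,q$ lie in all of those hyperplanes, so does $\seg{pq}$; hence $\seg{pq}$ can leave $C$ only by (i) meeting a wall contained in $n_1^\perp$ for some $n_1$ with $R_{n_1}\notin S$, or (ii) leaving some rampart $R_{n_0}\in S$ from within $n_0^\perp$. In case (ii), at the point where $\seg{pq}$ exits $R_{n_0}$, Lemma~\ref{bare point} produces a wall through that point whose hyperplane is not parallel to $\seg{pq}$; since $\seg{pq}$ lies in $n_0^\perp$ and so cannot cross it transversally, that hyperplane is distinct from $n_0^\perp$, so $\seg{pq}$ again crosses a wall transversally. (Here we use Proposition~\ref{min sup} to replace $\D$, without changing $\Fan(\D)$, by an equivalent diagram in which each $\D_k$ has minimal support, as Lemmas~\ref{bare point} and~\ref{tech lem detailed} require.) So in both cases $\seg{pq}$ crosses a wall, contained in $n^\perp$ say. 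Now choose $p',q'$ near $p,q$, lying in no wall of $\D$ and on a single side of each rampart in $S$, and deform a path witnessing the $\D$-equivalence of $p$ and $q$ to a generic path from $p'$ to $q'$ crossing no wall; this gives $\p_{p',q',\D}=\id$. On the other hand, deform $\seg{pq}$ to a generic line segment from $p'$ to $q'$ still crossing a wall in $n^\perp$; Lemma~\ref{tech lem detailed} then yields $\p_{p',q',\D}(z^m)\neq z^m$ for any $m\in M^\circ\setminus n^\perp$, contradicting consistency. Hence $C$ is convex and $\overline{C}$ is a closed convex cone.

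For the face condition I would first pin down the local structure of the $\D$-classes: each $\D$-class $C^\circ$ is relatively open, with linear span equal to the intersection of the hyperplanes of its ramparts, and $\overline{C^\circ}$ is the disjoint union of the $\D$-classes it meets. It follows that the $\D$-classes are partially ordered by ``contained in the closure of,'' that the $\D$-classes are the relative interiors of the $\D$-cones, and that every cone of $\Fan(\D)$ is a face of a maximal $\D$-cone. The subtle point is that the ramparts through a point need not be locally finite --- this is precisely the source of the pathologies promised in Remark~\ref{crazy fan} --- but modulo $\m^{k+1}$ the diagram $\D_k$ is finite, so the local picture at a point is an increasing limit over $k$ of the finite conical pictures cut out by the $\D_k$, and that suffices. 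Granting this, if $D^\circ$ is a $\D$-class contained in $\overline{C^\circ}$ then $\overline{D^\circ}$ is a face of $\overline{C^\circ}$: using convexity of $D^\circ$, one verifies the defining property of a face by observing that a segment in $\overline{C^\circ}$ whose relative interior meets $\overline{D^\circ}$ cannot leave the rampart-stratum of $D^\circ$ except through its relative boundary, hence lies in $\overline{D^\circ}$. With the faces of $\D$-cones identified in this way, the intersection condition for two $\D$-cones $\overline{C^\circ}$, $\overline{D^\circ}$ follows: the intersection is a union of $\D$-cones $\overline{E^\circ}$ with $E^\circ$ below both, and being convex it must be a single such $\overline{E^\circ}$, a common face; extending from $\D$-cones to their faces is then standard, again via ``a face of a face is a face.''

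The step I expect to be the main obstacle is convexity of $\D$-classes, and within it the deformation argument: one must push the witnessing path off the walls of the common ramparts and straighten the segment to a generic line segment while controlling exactly which walls and ramparts become newly crossed, in a setting where infinitely many walls may accumulate along the segment. Once convexity and the $\m^{k+1}$-truncated local picture are in hand, the remaining work --- relative openness of the $\D$-classes, the structure of their closures, and the reduction from $\Fan(\D)$ to the $\D$-cones --- is comparatively routine.
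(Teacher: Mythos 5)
Your architecture matches the paper's (completeness and face-closure are indeed immediate; positive scaling is easy; convexity of $\D$-classes and the face condition carry all the weight, and Lemma~\ref{bare point}, Lemma~\ref{tech lem detailed}, and Proposition~\ref{min sup} are the right tools), but two steps you rely on are genuine gaps rather than routine verifications. The first is the deformation argument in the convexity proof, which you flag as the main obstacle but do not resolve. Both $p$ and $q$ lie in $\Supp(\D)$, and the walls of $\D\setminus\Ram_\D(p)$ form a countable union of closed sets that can accumulate arbitrarily close to the witnessing path $\tilde\gamma$ and to $\seg{pq}$ without meeting them. Consequently there is no fixed pair $p',q'$ for which you can guarantee that the pushed-off witnessing path crosses no wall while the pushed-off segment crosses only the intended one: the admissible perturbation size shrinks with $k$, so you only get statements modulo $\m^{k+1}$ for $k$-dependent perturbations, and you must prove that the resulting truncated path-ordered products converge to something depending only on the endpoints. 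This is exactly the content of the paper's Proposition~\ref{consistent restrict} (for $U$ the span of $\cap\Ram_\D(p)$, the product $\p_{\gamma,\D\setminus U}$ over walls whose hyperplanes do not contain $U$ is endpoint-independent for paths $\gamma\subset U$), and without it the contradiction between ``$\p_{p',q',\D}=\id$'' and ``$\p_{p',q',\D}(z^m)\neq z^m$'' does not assemble.

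The second gap is the face condition. Your route rests on the assertion that the closure of a $\D$-class is the disjoint union of the $\D$-classes it meets (hence a stratification partially ordered by closure, with faces read off from strata). That is not proved, and it is not obviously easier than the fan property itself; note that the paper's Proposition~\ref{seg int} is deliberately weaker — for $p,q$ in a $\D$-cone $C$ it places $\relint(\seg{pq})$ in \emph{some} $\D$-class, explicitly not necessarily the class whose closure is $C$ — and even that weaker statement requires a delicate argument (generic points $s_1,s_2$ on $\seg{pq}$, the rectangle construction, and another appeal to Lemma~\ref{tech lem detailed} and consistency). The paper then finishes the intersection check with a separate path argument near the triangle $pqr$. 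Relatedly, you assert relative openness of $\D$-classes for the full (infinite) $\D$; the paper only establishes this for the finite truncations $\D_k$ and routes the infinite case through Propositions~\ref{D Dk equiv} and~\ref{lim fan}, which is the precise form of the ``increasing limit over $k$'' you gesture at. So the skeleton is right, but the two load-bearing lemmas (\ref{consistent restrict} and \ref{seg int}) are missing from your outline.
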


Some remarks are in order.

\begin{remark}\label{crazy fan}
Typically, a fan is required to have finitely many cones, but we allow infinite fans (fans with infinitely many cones).
The topology of a finite complete fan is spherical, in the sense that the fan induces a cell decomposition of the unit sphere about the origin.
Infinite fans, including $\Fan(\D)$, can be much more complicated.
For example, \cite[Example~1.15]{GHKK} describes a family of $2$-dimensional scattering diagrams---in fact, cluster scattering diagrams in the sense of Section~\ref{clus scat sec}.
(See also \cite[Section~3]{scatcomb}.)
When the associated scattering fan is infinite, its full-dimensional cones cover a proper subset of the plane---all but some cone $C$.
The cone $C$ is sometimes a ray, but it is typically $2$-dimensional.
Every ray contained in $C$ is a cone in the scattering fan.
Thus the topology of the fan is not a circle, but rather a line segment together with uncountably many isolated points.
\end{remark}

\begin{remark}\label{rampart simpler}
The point of using ramparts, instead of walls, to define $\Fan(\D)$ is that the geometry of ramparts is the geometry of the support of $\D$.
The exact wall structure is not invariant under equivalence, but Proposition~\ref{min sup} implies that the rampart structure is, for consistent scattering diagrams with minimal support. 
\end{remark}

\begin{remark}
The scattering fan depends only on the matrix ${[\epsilon_{ij}]_{i,j\in I_\uf}}$, in light of Proposition~\ref{coeffs don't matter}.
\end{remark}

Theorem~\ref{scat fan} says in particular that each $\D$-cone is indeed a closed convex cone.
We begin the proof by establishing that fact as part of the following proposition.

\begin{prop}\label{D convex}
If $\D$ is a consistent scattering diagram with minimal support, then each $\D$-class is a convex cone and each $\D$-cone is a closed convex cone.
\end{prop}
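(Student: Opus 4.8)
The plan is to prove convexity of a $\D$-class first and then read off the rest. Fix $p,q$ in one $\D$-class $U$, write $v=q-p$ and $p_t=p+tv$, and set $\R=\Ram_\D(p)=\Ram_\D(q)$, $W=\bigcap_{R\in\R}R$ (with $W=V^*$ when $\R=\emptyset$), and $S=\Supp(\D\setminus\R)$, so that $U$ is a path-component of $W\setminus S$ containing $p$ and $q$. It suffices to show $\seg{pq}\subseteq W\setminus S$, since then $\seg{pq}$ is connected and meets $U$, hence lies in $U$, giving convexity of $U$. For the cone statement, note that every wall is a rational cone, hence invariant under positive scaling; so $\Ram_\D(\lambda x)=\Ram_\D(x)$ and $\lambda S=S$ for $\lambda>0$, rescaling a $\Ram_\D$-constant path yields another, and the segment from $x$ to $\lambda x$ is $\Ram_\D$-constant; thus each $\D$-class is closed under positive scaling. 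Combined with convexity and the observation (Section~\ref{scatfan sec}) that a convex, positively-scaling-invariant set is closed under addition, each $\D$-class is a convex cone; and the closure of a convex cone is a closed convex cone.

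So everything reduces to showing $\seg{pq}\subseteq W\setminus S$. For the inclusion $\seg{pq}\subseteq W$, fix a rampart $R=R_{n_0}\in\R$; since $p,q\in R\subseteq n_0^\perp$ and $n_0^\perp$ is a linear subspace, $\seg{pq}\subseteq n_0^\perp$, and the point is that the segment cannot escape the support $R$ inside that hyperplane. This is where consistency enters, through Lemma~\ref{bare point}: any wall of $\D$ through $p$ lies in the rampart cut out by its normal vector, that rampart lies in $\Ram_\D(p)=\R$ and hence contains $q$, so \emph{every wall through $p$ has its hyperplane containing $q=p+v$}; by Lemma~\ref{bare point} this forbids the existence of $E>0$ with $p+\ep v\notin R$ for all $\ep\in(0,E)$. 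Re-running this argument from points of $\seg{pq}\cap R$ and, for each $k$, using the finiteness of $\D_k$ to reduce modulo $\m^{k+1}$ to a configuration of finitely many walls, I would upgrade this to: $\seg{pq}\cap R$ is relatively open and closed in $\seg{pq}$, hence equals $\seg{pq}$. For the disjointness $\seg{pq}\cap S=\emptyset$: because $p,q\in U$ there is a path $\gamma_0$ from $p$ to $q$ inside $U\subseteq W\setminus S$, and $\gamma_0$ lies in $n_0^\perp$ for every $R_{n_0}\in\R$ and avoids $S$, so it crosses no wall, and its path-ordered product (obtained as the limit over $k$) is the identity. If $\seg{pq}$ met a wall not lying in a rampart of $\R$, a transversal crossing would make the path-ordered product of the straight segment move some $z^m$ (Lemma~\ref{tech lem detailed}), contradicting the identity obtained from $\gamma_0$ together with consistency; the degenerate alternative, that $\seg{pq}$ lies inside the hyperplane of such a wall, is excluded by the same techniques applied within that hyperplane. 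Combining the two inclusions gives $\seg{pq}\subseteq W\setminus S\subseteq U$.

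The main obstacle is the bookkeeping that ties these two halves together in the infinite-fan setting, where $\Supp(\D)$ need not be closed and where points of a $\D$-class with $\R\neq\emptyset$ already lie in $\Supp(\D)$, so that path-ordered products must be produced by perturbation and by passage to the limit over $k$. I expect the most economical organization is to prove the two inclusions simultaneously: follow the segment from $p$, let $t^*$ be the supremum of the $t$ with $p_s\in U$ for all $s\le t$, and show --- using Lemma~\ref{bare point} to keep $p_{t^*}$ in $\bigcap\R$, using Lemma~\ref{tech lem detailed} and consistency to keep it out of $S$ and in the component of $p$, and using both to push $p_t$ into $U$ just past $t^*$ --- that $t^*=1$ and is attained.
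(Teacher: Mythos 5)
Your overall strategy is the paper's: show the straight segment $\seg{pq}$ stays inside $\bigcap\Ram_\D(p)$ via Lemma~\ref{bare point}, and rule out crossings of other walls via consistency together with Lemma~\ref{tech lem detailed}, working with $\D_k$ and passing to the limit. But there is a genuine gap at the step where you invoke consistency. You compare the straight segment with a path $\gamma_0$ from $p$ to $q$ inside the $\D$-class and assert that ``its path-ordered product is the identity,'' then derive a contradiction ``from $\gamma_0$ together with consistency.'' When $\Ram_\D(p)\neq\emptyset$, both $\gamma_0$ and $\seg{pq}$ lie entirely inside $n_0^\perp$ for every rampart in $\Ram_\D(p)$, hence inside $\Supp(\D)$, and their endpoints lie in walls; neither path is generic for $\D$, so $\p_{\gamma_0,\D}$ is undefined and the consistency of $\D$ says nothing about them. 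What is actually needed is a consistency statement for the restricted diagram $\D\setminus U$ (walls whose hyperplanes do not contain $U=\Span_\reals(\bigcap\Ram_\D(p))$) along paths confined to $U$. That statement is true but is not a formal consequence of consistency: it is Proposition~\ref{consistent restrict} in the paper, proved by a careful perturbation $\gamma+\ep v$ off of $U$, with $\ep$ chosen per $k$ so that the perturbed path crosses exactly the walls of $\D_k\setminus U$ that $\gamma$ crosses and no walls whose hyperplanes contain $U$, followed by a limit argument. Your closing paragraph names ``perturbation and passage to the limit over $k$'' as the remaining bookkeeping, but this is the crux of the proof rather than bookkeeping, and without it the contradiction you want (identity versus Lemma~\ref{tech lem detailed}) is not established.

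Two smaller points. First, your openness argument for $\seg{pq}\cap R$ cannot run on Lemma~\ref{bare point} alone: at an interior point $p_t$ of the segment you do not yet know that every wall through $p_t$ has hyperplane containing the direction $v$, and indeed that is only true once transversal crossings of walls outside the relevant hyperplanes have been excluded --- so the no-transversal-crossing claim must come first (or be interleaved as in your $t^*$ scheme), and it again rests on the restricted consistency above. Second, the ``degenerate alternative'' where $\seg{pq}$ meets a wall of $\D\setminus\Ram_\D(p)$ non-transversely is not handled by ``the same techniques within that hyperplane'': the paper converts it into a transversal crossing by locating the first such point $r$, taking a codimension-$2$ cone $C$ in the boundary of the offending rampart with $\Span_\reals(C)\not\supseteq\gamma$, and applying Lemma~\ref{bare point} at a suitable point of $C$ to produce a non-parallel wall through $r$ that $\gamma$ crosses transversely. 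That construction (also reused to show the segment never leaves a rampart of $\D_k$ containing $p$) is a concrete step your sketch would need to supply.
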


In preparation for the proof of Proposition~\ref{D convex}, we prove the following technical lemma.
Given a scattering diagram $\D$ and a linear subspace $U$ of~$V^*$, let $\D\setminus U$ be the set of walls $(\d,f_\d)$ in $\D$ such that the hyperplane containing~$\d$ does not also contain $U$. 
Even if $\D$ is consistent, there is no guarantee that $\D\setminus U$ is consistent, but the lemma asserts a version of consistency of $\D\setminus U$ for paths that lie in $U$.

\begin{prop}\label{consistent restrict}  
Suppose $\D$ is a consistent scattering diagram and suppose $U$ is a subspace of $ V^*$.
If a path $\gamma$ is contained in $U$ and generic in $\D\setminus U$, then the path-ordered product $\p_{\gamma,\D\setminus U}$ depends only on the endpoints of $\gamma$.
\end{prop}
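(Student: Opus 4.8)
The plan is to reduce to the finite truncations $\D_k$ and then exploit the consistency of $\D$ itself by perturbing paths slightly out of $U$.

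First observe that $\D\setminus U$ is again a scattering diagram: its truncation $(\D\setminus U)_k$ equals $\D_k\setminus U$, which sits inside the finite set $\D_k$. A path $\gamma$ contained in $U$ and generic in $\D\setminus U$ is generic in each $(\D\setminus U)_k$, so $\p_{\gamma,\D\setminus U}=\lim_{k\to\infty}\p_{\gamma,(\D\setminus U)_k}$ is defined, even though $\D\setminus U$ need not be consistent. By the standard argument --- given two such paths $\gamma_1,\gamma_2$ from $p$ to $q$, concatenate $\gamma_1$ with the reverse of $\gamma_2$ to get a loop in $U$ that is again generic in $\D\setminus U$, and note $\p_{\gamma_1*\gamma_2^{-1},\D\setminus U}=\id$ forces $\p_{\gamma_1,\D\setminus U}=\p_{\gamma_2,\D\setminus U}$ --- it suffices to prove that $\p_{\gamma,\D\setminus U}=\id$ whenever $\gamma$ is a loop contained in $U$ and generic in $\D\setminus U$, and for that it is enough to show $\p_{\gamma,(\D\setminus U)_k}=\id$ modulo $\m^{k+1}$ for each $k$.

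Fix $k$; the diagram $\D_k$ is finite and, by Proposition~\ref{Dk consist}, consistent modulo $\m^{k+1}$. The hyperplanes of walls of $\D_k$ that contain $U$ form a finite collection of proper subspaces of $V^*$, so I can pick $w\in V^*$ outside all of them, and --- by a routine transversality argument --- generic enough that $\gamma+\ep w$ is a generic loop for $\D_k$ for all sufficiently small $\ep>0$. Two observations then finish the computation. First, since $\gamma\subseteq U$ lies inside every hyperplane containing $U$ while $w$ does not, the translate $\gamma+\ep w$ meets no such hyperplane, hence crosses no wall of $\D_k$ lying outside $\D_k\setminus U$. Second, $\gamma$ crosses each wall of $\D_k\setminus U$ transversely at a point of the wall's relative interior --- using genericity of $\gamma$ in $\D\setminus U$ together with the fact that the hyperplanes of those walls are transverse to $U$ --- so for $\ep$ small, $\gamma+\ep w$ crosses exactly the same walls of $\D_k\setminus U$, in the same directions; since translation leaves the velocity $\gamma'$ unchanged, the corresponding wall-crossing automorphisms are literally the same, and any reordering among simultaneously crossed parallel walls is irrelevant. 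Hence $\p_{\gamma+\ep w,\D_k}=\p_{\gamma,(\D\setminus U)_k}$ modulo $\m^{k+1}$. But $\gamma+\ep w$ is a generic loop for $\D_k$, so consistency of $\D_k$ gives $\p_{\gamma+\ep w,\D_k}=\id$ modulo $\m^{k+1}$. Thus $\p_{\gamma,(\D\setminus U)_k}=\id$ modulo $\m^{k+1}$ for all $k$, and letting $k\to\infty$ completes the proof.

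I expect the only genuine obstacle to be the transversality bookkeeping in the second observation: one must guarantee that the perturbed loop crosses neither more nor fewer walls of $\D_k\setminus U$ than $\gamma$ and that it stays in their relative interiors so that it remains generic for $\D_k$. This requires ruling out paths that merely graze a wall and controlling the position of $U$ relative to the wall hyperplanes, but it is handled in the usual way by choosing $\gamma$, $w$, and $\ep$ to avoid the relevant finite or measure-zero families of bad choices.
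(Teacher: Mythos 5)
Your argument is correct and is essentially the paper's own proof: both rest on perturbing $\gamma$ to $\gamma+\ep v$ off of $U$ so that, for small $\ep$, the perturbed path is generic for the finite diagram $\D_k$, crosses no wall whose hyperplane contains $U$, and crosses exactly the walls of $\D_k\setminus U$ that $\gamma$ crosses, whence consistency of $\D_k$ applies. The only cosmetic difference is that you reduce to loops and invoke Proposition~\ref{Dk consist}, whereas the paper identifies $\p_{\gamma,\D\setminus U}$ directly with $\lim_{\ep\to0}\p_{p+\ep v,q+\ep v,\D}$, which manifestly depends only on the endpoints.
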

\begin{proof}
Let $\H$ be the set of hyperplanes in $V^*$ that both contain $U$ and contain a wall in $\D$.
Let $p=\gamma(0)$ and $q=\gamma(1)$.
Let $v$ be any vector not contained in any hyperplane in $\H$.
In particular, $v\not\in U$, so that, for any $\ep>0$, the path $\gamma+\ep v$ is disjoint from $U$.
We can't guarantee that $\gamma+\ep v$ is generic for any particular $\ep>0$, so we can't necessarily define any path-ordered product $\p_{\gamma+\ep v,\D}$.

However, we claim that for $k\ge 1$, there exists $\ep_k>0$ such that for ${0<\ep\le\ep_k}$,
\begin{enumerate}
\item $\gamma+\ep v$ is generic in $\D_k$, and 
\item \label{seq} the sequence $(\d_1,f_{\d_1}),\ldots,(\d_\ell,f_{\d_\ell})$ of walls of $\D_k$ crossed by $\gamma+\ep v$ equals the sequence of walls of $\D_k\setminus U$ crossed by $\gamma$.
\end{enumerate}
Since $\gamma$ is generic in $\D\setminus U$, it is generic in $\D_k\setminus U$.
Since $\D_k$ is finite, we can choose $\ep_k$ small enough so that the sequence of walls of $\D_k\setminus U$ crossed by $\gamma+\ep v$ is constant for $0\le\ep\le\ep_k$, and by our choice of $v$, if we restrict to $0<\ep\le\ep_k$, then $\gamma+\ep v$ crosses no walls whose hyperplane contains $U$.
Thus we can satisfy \eqref{seq}.
Also because $\gamma+\ep v$ crosses no walls whose hyperplane contains $U$ for $0<\ep\le\ep_k$, to ensure that $\gamma+\ep v$ is generic in $\D_k$, we only need to consider how it crosses walls in $\D_k\setminus U$.
It is clear that we can choose $\ep_k$ small enough so that the endpoints of $\gamma+\ep v$ are not in $\Supp(\D_k\setminus U)$.
If $B$ is the relative boundary of a wall in $\D_k\setminus U$ or is the intersection of two walls of $\D_k\setminus U$, then $B$ is closed.
Since $B$ is disjoint from $\gamma$, we can choose $\ep_k$ small enough for $\gamma+\ep v$ to avoid $B$.
There are only finitely many such $B$, so we can avoid all of them, and we have proved the claim.

We consider the limit, as $k\to\infty$, of the compositions $\p_{\gamma+\ep_kv,\d_\ell}\circ\cdots\circ\p_{\gamma+\ep_kv,\d_1}$.
(The index $\ell$ and the walls $\d_1,\ldots,\d_\ell$ vary with $k$ in this limit.)
By the claim, this is the same limit encountered in the definition of $\p_{\gamma,\D\setminus U}$, so the limit is~$\p_{\gamma,\D\setminus U}$.

Since neither $p$ nor $q$ is contained in any wall of $\D\setminus U$, there is no positive lower bound on values of $\ep>0$ such that neither $p+\ep v$ nor $q+\ep v$ is in $\Supp(\D)$.
When neither $p+\ep v$ nor $q+\ep v$ is in $\Supp(\D)$, we have a well-defined function $\p_{p+\ep v,q+\ep v,\D}$.
For each $k\ge 1$, choose $\ep'_k$ with $0<\ep'_k\le\ep_k$ such that neither $p+\ep'_k v$ nor $q+\ep'_k v$ is in $\Supp(\D)$.
By the claim, the path $\gamma+\ep'_kv$ is generic in $\D_k$.
Choose points $p_0,\ldots,p_{\ell'}$ on $\gamma+\ep'_kv$ and in $V^*\setminus\Supp(\D)$ between all distinct intersection points of $\gamma+\ep'_kv$ with walls of $\D_k$, taking $p_0=p$ and $p_{\ell'}=q$.
To compute $\p_{p+\ep v,q+\ep v,\D}=\p_{p_{\ell'-1},p_{\ell'},\D}\circ\cdots\circ\p_{p_0,p_1,\D}$ for any $\ep\le\ep'_k$, we take in the $k\th$ step exactly the composition $\p_{\gamma+\ep'_kv,\d_\ell}\circ\cdots\circ\p_{\gamma+\ep'_kv,\d_1}$.
Thus for $\ep\le\ep'_k$, the functions $\p_{p+\ep v,q+\ep v,\D}$ and $\p_{\lim_{\ep\to0}\gamma+\ep v,\D}$ coincide modulo $\m^{k+1}$.
Therefore $\lim_{\ep\to0}\p_{p+\ep v,q+\ep v,\D}$ exists and equals the limit from the previous paragraph.

We have shown that $\p_{\gamma,\D\setminus U}=\lim_{\ep\to0}\p_{p+\ep v,q+\ep v,\D}$ (independent of the vector $v$ chosen).
Since $\lim_{\ep\to0}\p_{p+\ep v,q+\ep v,\D}$ depends only on $p$ and $q$, we are done.
\end{proof}

We now prove the first piece of Theorem~\ref{scat fan}.

\begin{proof}[Proof of Proposition~\ref{D convex}]
The assertion that each $\D$-cone is closed is by definition and is made here for emphasis.
Closures of convex sets are themselves convex---see, for example, \cite[Theorem~2.3.5]{Webster}---and closures of cones are cones.
Thus we need only show that every $\D$-class is a convex cone.
As mentioned above, this is equivalent to showing that every $\D$-class is convex and closed under positive scaling.
But closure under positive scaling is immediate, because applying a positive scaling to a point preserves the set of walls containing the point.
Thus our task is to prove that every $\D$-class is convex.
Since $\D$ is consistent, Proposition~\ref{Dk consist} says that $\D_k$ is consistent (modulo $\m^{k+1}$) for each $k\ge1$, and we will use that fact throughout the proof.
Proposition~\ref{min sup} says that up to equivalence and without changing the support of $\D$, we can assume that each $\D_k$ has minimal support.

Suppose $p$ and $q$ are $\D$-equivalent.
That is, $\Ram_\D(p)=\Ram_\D(q)$ and there exists a path $\tilde\gamma$ from $p$ to $q$ in $\cap\Ram_\D(p)$ that does not intersect any wall not in a rampart in $\Ram_\D(p)$.
Writing $U$ for the $\reals$-linear span of $\cap\Ram_\D(p)$, we see that $\tilde\gamma$ is contained in $U$ and generic in $\D\setminus U$.
Write $\gamma$ for the straight line segment $\seg{pq}$.
To complete the proof, we need to show that $\gamma$ is contained in the $\D$-equivalence class of $p$ and $q$, or in other words, that the points on $\gamma$ are all $\D$-equivalent.
We will prove this by establishing two claims that rule out certain types of intersections of $\gamma$ with walls of $\D$, and then showing that $\gamma$ never leaves $\Ram_\D(p)$.

We first claim that, for $k\ge1$, the segment $\gamma$ intersects no wall of $\D_k\setminus U$ transversely.
(We include, under the description ``intersecting a wall transversely'' the case where $\gamma$ passes through the relative boundary of the wall while passing transversely through the hyperplane of the wall.)
Suppose to the contrary that $\gamma$ intersects some wall of $\D_k\setminus U$ transversely.
By definition, every hyperplane containing a wall in $\D_k\setminus U$ intersects $U$ in a subspace of $U$ (of codimension $1$ in $U$).
In particular, it is possible to choose a vector $v$ and $\ep>0$ such that the line segment $\gamma+\ep v$ is contained in $U$ but is not contained in any hyperplane containing a wall in $\D_k\setminus U$.
Furthermore, it is possible to choose $v$ such that, for small enough $\ep>0$, the segment $\gamma+\ep v$ intersects some wall of $\D_k\setminus U$ transversely but is generic in $\D_k\setminus U$.
Possibly by making $\ep$ even smaller (but still positive), neither the segment from $p$ to $p+\ep v$ nor the segment from $q+\ep v$ to $q$ intersects any wall in $\D_k\setminus U$.
Let $\gamma^+$ be the path consisting of a segment from $p$ to $p+\ep v$ followed by the segment $\gamma+\ep v$ and then the segment from $q+\ep v$ to $q$.
This is generic in $\D_k\setminus U$.

The path $\tilde\gamma$ intersects no wall of $\D_k\setminus U$, so $\p_{\tilde\gamma,(\D_k)\setminus U}$ is the identity.
Since the initial and final segments of $\gamma^+$ intersect no walls in $\D_k\setminus U$ and the middle segment is a straight line segment crossing some wall of $\D_k\setminus U$, Lemma~\ref{tech lem} says that $\p_{\gamma^+,(\D_k)\setminus U}$ is not the identity.
By this contradiction to Proposition~\ref{consistent restrict}, we have established the claim that $\gamma$ crosses no wall of $\D_k\setminus U$ transversely.

We next claim that the segment $\gamma$ does not intersect any walls of $\D\setminus\Ram_\D(p)$.
Since $\Ram_\D(p)=\Ram_\D(q)$, any hyperplane containing $U=\Span_\reals(\cap\Ram_\D(p))$ also contains $\gamma$, and thus cannot be crossed transversely by $\gamma$.
Thus if $\gamma$ intersects a wall of $\D\setminus\Ram_\D(p)$ transversely, then $\gamma$ intersects a wall of $\D\setminus U$ transversely.
The latter is ruled out by the first claim, so it remains to show a contradiction in the case where $\gamma$ intersects some wall of ${\D\setminus\Ram_\D(p)}$ non-transversely.
(We include, under the description ``intersecting a wall non-transversely'' the case where $\gamma$ passes only through the relative boundary of the wall while passing non-transversely through the hyperplane of the wall.)

Suppose to the contrary that $\gamma$ intersects some wall of $\left(\D\setminus\Ram_\D(p)\right)\cap \D_k$ non-transversely.
Let $r$ be the first point where $\gamma$ intersects a wall of $\left(\D\setminus\Ram_\D(p)\right)\cap\D_k$ non-transversely.
Let $R$ be a rampart of $\D_k$ containing a wall of $\left(\D\setminus\Ram_\D(p)\right)\cap\D_k$ intersected non-transversely at $r$.
For small enough $\ep>0$, the $\ep$-ball $\B$ about $r$ does not intersect any wall of $\D_k$ not containing $r$.
The rampart $R$ is a union of (finitely many) walls of $\D_k$, each of which is a closed polyhedral cone of codimension $1$.
Its boundary is a union of (finitely many) polyhedral cones of codimension $2$, each contained in a maximal proper face of a wall contained in $R$.
The point $r$ is in the boundary of $R$, and is thus contained in some cone $C$ of codimension $2$ that is part of the boundary of $R$.
Furthermore, $C$ can be chosen so that $\Span_\reals(C)$ does not contain $\gamma$.
Choose a point $r'$ in $C\cap\B$ that is not contained in any rational hyperplane not containing $C$.
Lemma~\ref{bare point} says that $r'$ is contained in some wall $(\d,f_\d)$ of $\D_k$ not parallel to $R$.
By the definition of $\ep$, the wall $\d$ contains $r$.
The hyperplane containing $\d$ contains $C$, so it cannot also contain $\gamma$, because it is not parallel to $R$.
We have shown that the segment $\gamma$ intersects $\d$ transversely.
Since the hyperplane containing $\d$ doesn't contain $\gamma$, in particular it doesn't contain $U$, so $\d$ is in $\D_k\setminus U$.
However, we already proved the first claim (that $\gamma$ does not intersect any wall of $\D_k\setminus U$ transversely).
By this contradiction, we have completed the proof of the second claim (that $\gamma$ does not intersect any walls of $\D\setminus\Ram_\D(p)$).

We now show that $\gamma$ is contained in $\cap\Ram_\D(p)$.
In fact, we show that, for every $k\ge1$, if a rampart of $\D_k$ contains $p$, then it contains $\gamma$.
(This is enough, because each rampart of $\D$ is a union of ramparts of $\D_k$, as $k\to\infty$.)
Suppose $k\ge1$ and $R$ is a rampart of $\D_k$ with $p\in R$.
For the sake of contradiction, suppose $\gamma\not\subseteq R$.
Since $R$ is a finite union of walls, there is some point $r$ where $\gamma$ leaves $R$ by crossing the boundary of $R$.
As in the proof of the second claim above, $r$ is contained in some cone $C$ of codimension $2$ that is part of the boundary of $R$ and $C$ can be chosen so that $\Span_\reals(C)$ does not contain~$\gamma$.
Arguing as in the conclusion of that proof, we exhibit a wall $\d\in\D\setminus U$ such that $\gamma$ intersects $\d$ (transversely).
But this contradicts the first claim, and we conclude that $\gamma\subseteq R$.

We have shown that $\gamma$ is contained in $\cap\Ram_\D(p)$ and does not intersect any walls in $\D\setminus\Ram_\D(p)$.
Thus $\Ram_\D(\,\cdot\,)$ is constant on $\gamma$, and thus by definition, the points of $\gamma$ are all $\D$-equivalent.
\end{proof}


The next step in the proof of Theorem~\ref{scat fan} is to reduce the theorem to proving that $\Fan(\D_k)$ is a fan for each $k\ge1$.
We will need the following well-known fact about convex closed sets.

\begin{prop}\label{cone definite}  
Suppose $C$ is a closed convex set and suppose $p\in\relint(C)$.
Then $C$ is the set of points $q$ such that there exists a path from $p$ to $q$ contained in $\relint(C)$ except possibly at $q$.
Given $q\in C$, this path can be taken to be a straight line segment.
\end{prop}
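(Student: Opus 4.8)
The plan is to reduce the statement to the standard \emph{accessibility lemma} (or \emph{line segment principle}) for convex sets: if $C$ is convex, $p\in\relint(C)$, and $q$ lies in the closure of $C$, then every point of the half-open segment $\set{(1-t)p+tq:0\le t<1}$ lies in $\relint(C)$. This is a classical fact in any finite-dimensional real vector space; see for instance \cite{Webster}. I would quote it rather than reprove it, since its elementary proof (translate so that $p=0$, then for $x=(1-t)p+tq$ with $t<1$ combine a relatively open ball in $C$ about $p$ with points of $C$ near $q$ to produce a relatively open ball in $C$ about $x$) adds nothing to the present discussion.

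Granting the accessibility lemma, the inclusion of $C$ in the described set, together with the final sentence of the proposition, follows immediately. Let $q\in C$. Since $C$ is closed, $q$ lies in the closure of $C$, so the accessibility lemma applies and shows that the straight line segment $\seg{pq}$ is contained in $\relint(C)$ except possibly at its endpoint $q$. This is a path of the required form, and it is a line segment, which establishes both that $q$ belongs to the set described in the statement and the last assertion of the proposition.

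For the reverse inclusion, suppose $q$ admits a path $\gamma$ from $p$ to $q$ with $\gamma(t)\in\relint(C)$ for all $t<1$. Then $\gamma(t)\in\relint(C)\subseteq C$ for every $t<1$, and since $\gamma$ is continuous and $C$ is closed, $q=\gamma(1)=\lim_{t\to1^-}\gamma(t)\in C$. Hence every $q$ in the described set lies in $C$, and the two sets coincide.

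I do not expect any real obstacle here: the entire content is the accessibility lemma, and the remaining arguments are one-line appeals to continuity and to the closedness of $C$. The only genuine choice is how much of the (elementary) accessibility lemma to include versus cite; given that $C$ is not assumed polyhedral, I would cite it from \cite{Webster} and keep the proof short.
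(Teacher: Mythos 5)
Your proposal is correct and takes essentially the same approach as the paper: the substantive direction is the accessibility lemma cited from \cite{Webster}, and the other direction is a limit argument using closedness of $C$. (The paper phrases that easy direction via the fact that a closed convex set is the closure of its relative interior, whereas you appeal directly to $\relint(C)\subseteq C$ and closedness; the two are interchangeable here.)
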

\begin{proof}
If there exists a path from $p$ to $q$ contained in $\relint(C)$ except possibly at $q$, then in particular $q$ is the limit of the points in the path, so $q$ is in the closure of $\relint(C)$.
It is well-known that a closed convex set is the closure of its relative interior (see, for example \cite[Theorem~2.3.8]{Webster}), so $q\in C$.

Conversely, suppose $q\in C$ and consider the line segment $\seg{pq}$.
A well-known fact on convexity says that, given a segment $\gamma$ connecting a point $p$ in the relative interior of a convex set to a point $q$ in the closure of that set, the segment is contained in the relative interior, except possibly $q$.
(See for example \cite[Theorem~2.3.4]{Webster}.)
\end{proof}

We continue by relating $\D$-equivalence to $\D_k$ equivalence.

\begin{prop}\label{D Dk equiv}
Suppose that $\D$ is consistent and has minimal support and that $\D_k$ has minimal support (as a scattering diagram modulo $\m^{k+1}$) for each $k\ge 1$.
Two points $p,q\in V^*$ are $\D$-equivalent if and only if they are $\D_k$-equivalent for all $k\ge 1$.
\end{prop}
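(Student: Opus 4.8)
The plan is to recast both equivalence relations in terms of straight line segments, and then to exploit the fact that each rampart of $\D$ is the increasing union, over $k$, of the corresponding ramparts of $\D_k$ (and likewise $\Supp(\D)=\bigcup_{k\ge1}\Supp(\D_k)$). Write $R_{n_0}$ for the rampart of $\D$ associated to $n_0\in N^+$ and $R_{n_0}^{(k)}$ for the rampart of $\D_k$ associated to the same $n_0$, so that $R_{n_0}=\bigcup_k R_{n_0}^{(k)}$, and recall that each wall lies in exactly one rampart. Proposition~\ref{D convex} applies to $\D$, and---since each $\D_k$ is consistent by Proposition~\ref{Dk consist} and has minimal support by hypothesis---also to each $\D_k$. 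Consequently, for any $p,q\in V^*$, the points $p$ and $q$ are $\D$-equivalent (respectively $\D_k$-equivalent) if and only if $\Ram_\D$ (respectively $\Ram_{\D_k}$) is constant on the segment $\gamma:=\seg{pq}$: if some path with constant value of $\Ram$ exists then Proposition~\ref{D convex} places all of $\gamma$ in the relevant class, while conversely $\gamma$ itself is such a path. So it suffices to prove that $\Ram_\D$ is constant on $\gamma$ if and only if $\Ram_{\D_k}$ is constant on $\gamma$ for every $k\ge1$.

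The ``if'' direction is immediate. If $\Ram_{\D_k}$ is constant on $\gamma$ for every $k$ and $\gamma$ meets $R_{n_0}$, then $\gamma$ meets $R_{n_0}^{(k)}$ for some $k$, so $\gamma\subseteq R_{n_0}^{(k)}\subseteq R_{n_0}$; thus for each $n_0$ the segment $\gamma$ either misses $R_{n_0}$ or is contained in it, which is exactly the constancy of $\Ram_\D$ on $\gamma$.

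For the ``only if'' direction, suppose $\Ram_\D$ is constant on $\gamma$, so by Proposition~\ref{D convex} the segment $\gamma$ lies in the $\D$-class of $p$; hence $\gamma\subseteq\bigcap\Ram_\D(p)$ and $\gamma$ is disjoint from $\Supp(\D\setminus\Ram_\D(p))$. It follows that $\gamma$ crosses no wall of $\D$ transversely: a wall lying in a rampart in $\Ram_\D(p)$ has hyperplane containing $\bigcap\Ram_\D(p)\supseteq\gamma$, and a wall of $\D\setminus\Ram_\D(p)$ does not meet $\gamma$ at all. Fix $k$; we claim that for each $n_0$ the segment $\gamma$ is either contained in $R_{n_0}^{(k)}$ or disjoint from it, which shows $\Ram_{\D_k}$ is constant on $\gamma$. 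If $p\notin R_{n_0}$ then $R_{n_0}\notin\Ram_\D(p)$, so every wall of $R_{n_0}$ lies in $\D\setminus\Ram_\D(p)$ and $\gamma$ avoids $R_{n_0}\supseteq R_{n_0}^{(k)}$. If $p\in R_{n_0}$ then $\gamma\subseteq\bigcap\Ram_\D(p)\subseteq R_{n_0}\subseteq n_0^\perp$; suppose for contradiction that $\gamma$ meets $R_{n_0}^{(k)}$ but is not contained in it. Since $R_{n_0}^{(k)}$ is a finite union of codimension-one polyhedral cones inside $n_0^\perp$, there is a point $r\in\gamma$ in the relative boundary of $R_{n_0}^{(k)}$ in $n_0^\perp$ where $\gamma$ exits $R_{n_0}^{(k)}$. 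Arguing exactly as in the proof of Proposition~\ref{D convex}, choose a codimension-two face $C$ of that boundary through $r$ with $\Span_\reals(C)\not\supseteq\gamma$, then a point $r'$ near $r$ on $C$ contained in no rational hyperplane not containing $C$, and apply Lemma~\ref{bare point} to $\D_k$ to obtain a wall $\d$ of $\D_k$ through $r$ whose hyperplane contains $C$ but is not $n_0^\perp$, hence does not contain $\gamma$. Then $\gamma$ crosses $\d$ transversely, contradicting the observation above. This proves the claim, and with the reduction of the first paragraph the proposition follows.

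The main obstacle is the very last step of the ``only if'' direction: ruling out the possibility that $\gamma$ slides partway into a rampart of some $\D_k$ despite crossing no wall of $\D$ transversely. This forces a codimension-two boundary analysis, but it is a near-verbatim reprise of the second claim (and of the concluding paragraph) in the proof of Proposition~\ref{D convex}, invoked here for $\D_k$ rather than $\D$, together with Lemma~\ref{bare point}; so nothing substantially new is needed.
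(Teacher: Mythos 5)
Your proof is correct and follows essentially the same route as the paper's: reduce both equivalence relations to constancy of the rampart function on the straight segment $\seg{pq}$ via Proposition~\ref{D convex}, get one direction from the fact that each rampart of $\D$ is the increasing union of the corresponding ramparts of the $\D_k$, and get the other from Lemma~\ref{bare point}. The only (harmless) difference is at the exit point $r$: since the segment genuinely leaves $R_{n_0}^{(k)}$ there, the direction of $\seg{pq}$ itself already satisfies the hypothesis of Lemma~\ref{bare point}, so the paper applies that lemma directly and obtains a transversely crossed wall at once, making your codimension-two boundary detour (needed in Proposition~\ref{D convex} only because the segment there may slide within a rampart's hyperplane) valid but unnecessary here.
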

\begin{proof}
Suppose $p$ and $q$ are $\D_k$-equivalent for all $k\ge 1$.
Then for all $k\ge 1$, by Proposition~\ref{D convex}, $\Ram_{\D_k}(\,\cdot\,)$ is constant on the line segment $\seg{pq}$.
Since each rampart of $\D$ is a union, as $k\to\infty$, of ramparts of $\D_k$, also $\Ram_\D(\,\cdot\,)$ is constant on $\seg{pq}$.

Conversely, suppose $p$ and $q$ are $\D$-equivalent and fix $k\ge1$.
If $p=q$, then we are done, so assume not.
Then by Proposition~\ref{D convex}, $\Ram_\D(\,\cdot\,)$ is constant on the line segment $\seg{pq}$, and in particular $\seg{pq}$ does not intersect any wall of $\D\setminus\Ram_\D(p)$.
Thus $\seg{pq}$ does not intersect any wall of $\D_k\setminus\Ram_\D(p)$.
If $\Ram_{\D_k}(\,\cdot\,)$ is not constant on $\gamma$, then Lemma~\ref{bare point} implies that $\seg{pq}$ intersects a wall $\d$ of $\D_k$ transversely.
But then the rampart containing $\d$ intersects $\seg{pq}$ at only one point, contradicting the fact that $\Ram_\D(\,\cdot\,)$ is constant on the line segment $\seg{pq}$.
\end{proof}

We next prove some general facts about collections of cones defined by equivalence relations.
Given an equivalence relation $\equiv$ all of whose classes are convex cones, define an \newword{$\equiv$-cone} to be the closure of an $\equiv$-class.
Let $\F_\equiv$ be the collection of all $\equiv$-cones and faces of $\equiv$-cones.

\begin{prop}\label{relint class equiv}
If $\equiv$ is an equivalence relation all of whose classes are convex cones, then every $\equiv$-cone $C$ is the closure of a unique $\equiv$-class, and this class contains $\relint(C)$.
\end{prop}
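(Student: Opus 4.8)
The plan is to deduce everything from two elementary facts about convex sets in the finite-dimensional space $V^*$: first, that a nonempty convex set has nonempty relative interior; and second, that a convex set $A$ and its closure have the same relative interior, i.e.\ $\relint(\overline{A})=\relint(A)$ (both available in \cite{Webster}, alongside the facts already cited in the proof of Proposition~\ref{cone definite}). Note that finite-dimensionality of $V^*$ is what guarantees the first fact; an $\equiv$-class is automatically nonempty since it contains a representative.

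First I would unwind the definition: an $\equiv$-cone $C$ is, by definition, $\overline{A}$ for some $\equiv$-class $A$, and by hypothesis $A$ is a convex cone. Since $A$ is nonempty and convex, $\relint(A)\neq\emptyset$, and since $\relint(\overline{A})=\relint(A)$ we obtain $\relint(C)=\relint(\overline{A})=\relint(A)\subseteq A$. This already proves that the $\equiv$-class $A$ with $\overline{A}=C$ contains $\relint(C)$.

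For uniqueness, suppose also $C=\overline{A'}$ for an $\equiv$-class $A'$. The same argument gives $\relint(C)\subseteq A'$. Because $\relint(C)$ is nonempty, pick a point $p\in\relint(C)$; then $p\in A\cap A'$, and since $A$ and $A'$ are equivalence classes of the same relation, $A=A'$. In fact this shows a little more: the unique $\equiv$-class whose closure is $C$ is precisely the class of any point of $\relint(C)$. There is no real obstacle here; the only thing to be careful about is to invoke the correct standard convexity statements and to use that $V^*$ is finite-dimensional so that relative interiors of nonempty convex sets are nonempty.
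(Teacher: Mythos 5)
Your proof is correct and follows essentially the same route as the paper: both rest on the fact that a convex set and its closure have the same relative interior (\cite[Theorem~2.3.8]{Webster}), and both derive uniqueness from the observation that two distinct classes cannot share the nonempty set $\relint(C)$. Your explicit remarks about nonemptiness and finite-dimensionality are a harmless elaboration of what the paper leaves implicit.
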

\begin{proof}
The cone $C$ is the closure of some $\equiv$-class $C'$, which is a convex cone.
It is well-known that the relative interior of a convex set $A$ equals the relative interior of the closure of $A$.
(See for example \cite[Theorem~2.3.8]{Webster}.)
Thus $\relint(C)=\relint(C')\subseteq C'$.
Now $C$ can't be the closure of a different $\equiv$-class, because if so, both classes contain $\relint(C)\neq\emptyset$.
\end{proof}

\begin{prop}\label{lim fan}
Suppose $(\,\equiv_k\,:\,k\ge1)$ is a family of equivalence relations on on a real vector space such that the equivalence classes are relatively open convex cones.
Suppose also that $p\equiv_\ell q\implies p\equiv_kq$ for all $k$ and $\ell$ with $1\le k\le\ell$.
Define $\equiv$ to be the equivalence relation with $p\equiv q$ if and only if $p\equiv_kq$ for all $k\ge1$.
If $\F_{\equiv_k}$ is a fan for all $k\ge1$, then $\F_\equiv$ is a fan, and is the coarsest common refinement of the $\F_{\equiv_k}$.
\end{prop}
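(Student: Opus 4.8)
The plan is to reduce everything to the fan structure of the individual $\F_{\equiv_k}$ by means of a single structural identity expressing each $\equiv$-cone as an intersection of $\equiv_k$-cones. First I would record that each $\equiv$-class is a convex cone, being the intersection of the convex cones that are its $\equiv_k$-classes, so Proposition~\ref{relint class equiv} applies and $\F_\equiv$ is well-defined. Fix $x$, write $[x]_{\equiv_k}$ for its $\equiv_k$-class and $C_k(x)$ for the closure of $[x]_{\equiv_k}$ (the $\equiv_k$-cone through $x$); since $[x]_{\equiv_k}$ is relatively open, it coincides with $\relint(C_k(x))$, so in particular $x\in\relint(C_k(x))$. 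The key claim is that $\overline{[x]_\equiv}=\bigcap_k C_k(x)$. One inclusion is trivial; for the other, given $z$ in the right-hand side, Proposition~\ref{cone definite} applied inside each $C_k(x)$ places the half-open segment from $x$ to $z$ (without $z$) inside $\relint(C_k(x))=[x]_{\equiv_k}$, hence inside $\bigcap_k[x]_{\equiv_k}=[x]_\equiv$, so $z$ lies in the closure of $[x]_\equiv$.

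With this identity in hand, the fan axioms follow from elementary manipulations with faces. For two $\equiv$-cones $C=\bigcap_k C_k(x_0)$ and $C'=\bigcap_k C_k(y_0)$ I would write $C\cap C'=\bigcap_k\bigl(C_k(x_0)\cap C_k(y_0)\bigr)$; each factor is a face of $C_k(x_0)$ because $\F_{\equiv_k}$ is a fan, and an intersection $\bigcap_k F_k$ of faces $F_k$ of the $C_k(x_0)$ is always a face of $\bigcap_k C_k(x_0)=C$ (immediate from the definition of face), so $C\cap C'$ is a face of $C$, and symmetrically of $C'$. Now let $F,G\in\F_\equiv$ be faces of $\equiv$-cones $C,C'$ and put $H=C\cap C'$. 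Then $F\cap H$ and $G\cap H$ are faces of $C$ and of $C'$ contained in $H$, hence faces of $H$; their intersection $F\cap G$, which lies in $H$ since $F\subseteq C$ and $G\subseteq C'$, is then a face of $H$, hence a face of the face $F\cap H$ of $H$, hence — because $F\cap H$ is itself a face of $F$ — a face of $F$, and symmetrically a face of $G$. Closure of $\F_\equiv$ under faces is immediate from the corresponding property of $\equiv$-cones together with the fact that a face of a face is a face. This proves $\F_\equiv$ is a fan.

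For the universal property, $\F_\equiv$ refines each $\F_{\equiv_k}$ because every cone of $\F_\equiv$ lies in some $\equiv$-cone $\overline{[x]_\equiv}=\bigcap_j C_j(x)\subseteq C_k(x)\in\F_{\equiv_k}$. Conversely, suppose $\G$ is a fan refining every $\F_{\equiv_k}$, and let $\Gamma\in\G$ with $x\in\relint(\Gamma)$. For each $k$, refinement gives $\Gamma\subseteq H$ for some $H\in\F_{\equiv_k}$; since $x\in H\cap\relint(C_k(x))$ and $C_k(x)\cap H$ is a face of $C_k(x)$, that face is all of $C_k(x)$, so $C_k(x)\subseteq H$ and $C_k(x)$ is a face of $H$; extending a segment from any point of $\Gamma$ slightly past $x$ inside $\Gamma\subseteq H$ then forces it into the face $C_k(x)$, whence $\Gamma\subseteq C_k(x)$. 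Intersecting over $k$ gives $\Gamma\subseteq\bigcap_k C_k(x)=\overline{[x]_\equiv}\in\F_\equiv$, so $\G$ refines $\F_\equiv$; thus $\F_\equiv$ is the coarsest common refinement, and it is unique because two fans that each refine the other coincide.

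The main obstacle is the structural identity $\overline{[x]_\equiv}=\bigcap_k C_k(x)$ — that is, justifying that closure commutes with this particular infinite intersection, which it does only because the hypothesis that the $\equiv_k$-classes are relatively open is exactly what makes Proposition~\ref{cone definite} applicable at the point $x$. A related subtlety one must respect is that a face of an $\equiv$-cone need not itself be an $\equiv$-cone (its relative interior can be a proper subset of an $\equiv$-class), so the fan axiom has to be verified at the level of faces, as above, rather than by treating $\F_\equiv$ as if it consisted only of $\equiv$-cones.
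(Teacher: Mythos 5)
Your proof is correct and follows essentially the same route as the paper's: the structural identity $\overline{[x]_\equiv}=\bigcap_k C_k(x)$, established via Proposition~\ref{cone definite} and the relative openness of the $\equiv_k$-classes, is exactly the paper's step showing $D=\bigcap_{k\ge1}D_k$, and your face verification is the same segment argument in slightly more abstract packaging. The only differences are cosmetic: you check the face condition for all pairs of fan elements rather than reducing to maximal cones, and you spell out the coarsest-common-refinement claim that the paper dismisses as immediate.
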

\begin{proof}
For any $p$, the $\equiv$-class of $p$ is the intersection of the $\equiv_k$-classes of $p$, and thus is convex.
To check that a collection of cones is a fan, it is enough to check that it is closed under passing to faces and for \emph{maximal} cones $C$ and $D$, the intersection $C\cap D$ is a face of each.
(This is well-known.  See for example \cite[Lemma~5.19]{universal} for a proof.)  
The collection $\Fan_\equiv$ is closed under passing to faces, by definition.
Since each maximal cone of $\Fan_\equiv$ is the closure of a $\equiv$-class, to show that $\F_\equiv$ is a fan, it is enough to check that for any closures $C$ and $D$ of $\equiv$-classes, the intersection $C\cap D$ is a face of $C$.
Appealing to Proposition~\ref{relint class equiv}, let $C'$ be the unique $\equiv$-class whose closure is $C$.
For each $k\ge1$, let $C'_k$ be the $\equiv_k$-class containing $C'$ and let $C_k$ be the closure of $C'_k$.
Define $D'$, $D'_k$, and $D_k$ analogously.

Suppose $L$ is a line segment contained in $C$ and whose relative interior intersects~$D$.
We need to show that $L\subseteq D$.
For each $k\ge 1$, since $C\subseteq C_k$ and $D\subseteq D_k$, the segment $L$ is contained in $C_k$ and its interior intersects $D_k$.
Since $\F_{\equiv_k}$ is a fan, $C_k\cap D_k$ is a face of $D_k$, so $L$ is contained in $C_k\cap D_k$.
Thus $L$ is contained in each~$D_k$.

To show that $L$ is contained in $D$, we prove that $D=\bigcap_{k\ge1}D_k$.
The containment $D\subseteq\bigcap_{k\ge1}D_k$ is immediate.
Let $q$ be a point in $\bigcap_{k\ge1}D_k$ and let $p\in\relint(D)\subseteq D'$. Then $p\in D'_k$ for all $k\ge1$.
It is well-known that a relatively open convex set is the relative interior of its closure.
(This follows from \cite[Theorem~2.3.8]{Webster}, already quoted above.)
Thus since each $D'_k$ is relatively open and convex, $D'_k=\relint(D_k)$, so $p\in\relint(D_k)$ for all $k\ge1$.  
Now Proposition~\ref{cone definite} says that each point in $\seg{pq}$, except possibly $q$, is in $\relint(D_k)=D'_k$.
Thus every point in $\seg{pq}$, except possibly $q$, is in $\bigcap_{k\ge1} D'_k=D'$.
Thus $q\in D$.

We have proved that $L\subset C\cap D$ and thus that $C\cap D$ is a face of $C$.
Therefore $\F_\equiv$ is a fan.
The assertion about coarsest common refinement is immediate.
\end{proof}

The following special case of Proposition~\ref{relint class equiv} is used often enough to warrant stating separately.
\begin{prop}\label{relint class}
Suppose $\D$ is a consistent scattering diagram with minimal support.
Every $\D$-cone $C$ is the closure of a unique $\D$-class.
This class contains $\relint(C)$.
\end{prop}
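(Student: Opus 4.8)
The plan is to derive this as an immediate instance of Proposition~\ref{relint class equiv}, which is exactly the same assertion for an abstract equivalence relation all of whose classes are convex cones. The only hypothesis to verify is that $\D$-equivalence is such a relation. But $\D$-equivalence is an equivalence relation essentially by construction: it is defined via the existence of a path on which $\Ram_\D(\,\cdot\,)$ is constant, and this is visibly reflexive, symmetric, and transitive (transitivity by concatenating two such paths). Moreover, Proposition~\ref{D convex} already tells us that every $\D$-class is a convex cone. Hence, applying Proposition~\ref{relint class equiv} with $\equiv$ taken to be $\D$-equivalence—so that an $\equiv$-cone is precisely a $\D$-cone—we conclude that each $\D$-cone $C$ is the closure of a unique $\D$-class and that this class contains $\relint(C)$.

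I expect no real obstacle here; the substantive work has already been carried out in Propositions~\ref{D convex} and~\ref{relint class equiv}, and this statement is extracted only because the special case is invoked repeatedly. If one preferred a self-contained argument rather than citing the abstract version, one would unwind its short proof: write $C=\overline{C'}$ for some $\D$-class $C'$, which is a convex cone by Proposition~\ref{D convex}; use the standard fact that a convex set and its closure have the same relative interior to get $\relint(C)=\relint(C')\subseteq C'$; and note that $\relint(C)\neq\emptyset$, so $C$ cannot also be the closure of a different $\D$-class, since two distinct $\D$-classes with closure $C$ would both contain $\relint(C)$ and hence intersect, contradicting disjointness of equivalence classes.

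In short, the proof is a one-line appeal to Proposition~\ref{relint class equiv}, with the hypothesis supplied by Proposition~\ref{D convex}.
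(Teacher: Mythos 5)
Your proposal is correct and matches the paper exactly: the paper states this result as a special case of Proposition~\ref{relint class equiv}, with the hypothesis that $\D$-classes are convex cones supplied by Proposition~\ref{D convex}, which is precisely your argument. The unwound self-contained version you sketch is also the same as the paper's proof of the abstract statement.
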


The final tool needed for the proof of Theorem~\ref{scat fan} is the following proposition, which will take some effort.

\begin{prop}\label{seg int}
Suppose $\D$ is a consistent scattering diagram with minimal support.
If $C$ is a $\D$-cone and $p,q\in C$, then the relative interior of the line segment $\seg{pq}$ is contained in a $\D$-class.
\end{prop}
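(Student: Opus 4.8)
The plan is to reduce the proposition to the statement that $\Ram_\D(\,\cdot\,)$ is constant on $\relint(\seg{pq})$ and then to establish this in the finite case. The reduction to constancy is immediate: if $\Ram_\D$ has constant value $\mathcal S$ on $\relint(\seg{pq})$, then every point of $\relint(\seg{pq})$ lies in $\cap\mathcal S$ and in no wall outside the ramparts of $\mathcal S$ (else that rampart would join $\Ram_\D$ at the crossing), so the connected set $\relint(\seg{pq})$ lies in a single path-component of $(\cap\mathcal S)\setminus\Supp(\D\setminus\mathcal S)$, i.e.\ in one $\D$-class. By Proposition~\ref{min sup} I may assume every $\D_k$ has minimal support; each rampart of $\D$ is the increasing union over $k$ of the corresponding ramparts of $\D_k$, and by Proposition~\ref{D Dk equiv} the $\D$-cone $C$ lies inside a $\D_k$-cone for every $k$, so $p$ and $q$ lie in a common $\D_k$-cone. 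Hence it suffices to prove, for a \emph{finite} consistent diagram $\D$ (modulo $\m^{k+1}$) all of whose $\D_j$ have minimal support, with $C=\overline{C'}$ a $\D$-cone and $p,q\in C$, that every rampart $R$ satisfies $R\cap\relint(\seg{pq})\in\{\emptyset,\relint(\seg{pq})\}$.

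Fix $R$, lying in the hyperplane $H=n_0^\perp$. If $\seg{pq}\subseteq H$ and $R$ met $\relint(\seg{pq})$ without containing all of it, then, $R$ being closed in the finite setting, there would be $x'\in R\cap\relint(\seg{pq})$ from which one direction $v$ along $\seg{pq}$ immediately leaves $R$; Lemma~\ref{bare point} then yields a wall $\d$ through $x'$ whose hyperplane does not contain $x'+v$, forcing that hyperplane to differ from $H$ and to meet the line of $\seg{pq}$ only at $x'$, so $\seg{pq}$ crosses it transversely at the interior point $x'$ of the rampart of $\d$. Thus it is enough to rule out the one local picture: \emph{$\seg{pq}$ crosses the hyperplane $H$ of some rampart $R$ transversely at a point $x\in R$ lying in $\relint(\seg{pq})$}, with $p,q$ then on opposite sides of $H$.

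To exclude this, I argue by consistency. Since $\seg{pq}\subseteq C$ but $\seg{pq}\not\subseteq H$, the rampart $R$ does not contain $C$, so $R\notin\Ram_\D(C')$; ramparts being determined by their hyperplanes, the walls of $R$ lie in $\D\setminus\Ram_\D(C')$, whence by Proposition~\ref{relint class} the set $\relint(C)\subseteq C'$ is disjoint from $R$. Set $U=\Span_\reals(C)$. Then $H\not\supseteq U$, while every rampart in $\Ram_\D(C')$ has hyperplane containing $\Span_\reals(C')=U$, so $\D\setminus U\subseteq\D\setminus\Ram_\D(C')$ and $\relint(C)$ is disjoint from $\Supp(\D\setminus U)$. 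Pick $p^+,q^-\in\relint(C)$ close to $p$ and $q$ and on their respective sides of $H$. The segment $\seg{p^+q^-}$ stays in the convex set $\relint(C)\subseteq U$, hence is generic for $\D\setminus U$ and crosses no wall of $\D\setminus U$, so by Proposition~\ref{consistent restrict} the path-ordered product from $p^+$ to $q^-$ in $\D\setminus U$ is the identity. On the other hand, $C$ is full-dimensional in $U$ and each wall of $R$ is full-dimensional in $H$, so $\seg{pq}$ can be perturbed within $U$ (there is room, as $\D$ is finite) to a path from $p^+$ to $q^-$ that is generic for $\D\setminus U$ and crosses a wall of $R$ at a general point of $R$, where by minimal support the product of the parallel wall functions is $\ne1$; the interval bookkeeping of Lemma~\ref{tech lem} (its line-segment version being Lemma~\ref{tech lem detailed}) then shows this path-ordered product moves $z^m$ for every $m\in M^\circ\setminus n_0^\perp$. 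This contradicts Proposition~\ref{consistent restrict}, so the local picture cannot occur.

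The main obstacle is precisely this last comparison: $p$ and $q$ may lie in $\Supp(\D)$ and $\seg{pq}$ need not be generic, so one cannot apply consistency to $\seg{pq}$ itself. The device is to carry out the whole comparison inside $U=\Span_\reals(C)$ — where $C$ is full-dimensional, $\relint(C)$ is open, and there is room both to slide the endpoints into $\relint(C)$ and to perturb the segment near its crossing of $R$ — and to use the relative-consistency statement of Proposition~\ref{consistent restrict} in place of consistency of $\D$ itself. The remaining technical care is to check that after perturbation the outer path still lies in $\relint(C)$, hence crosses nothing in $\D\setminus U$, while the perturbed segment still crosses an \emph{effective} wall of $R$ — one whose parallel wall functions multiply to something other than $1$ on its relative interior — which is where minimal support of each $\D_j$ is used.
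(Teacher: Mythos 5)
Your skeleton is right and largely parallels the paper's: the reduction of $\D$-equivalence on $\relint(\seg{pq})$ to constancy of $\Ram_\D(\,\cdot\,)$, the passage to the finite truncations via Propositions~\ref{min sup} and~\ref{D Dk equiv}, and the use of Lemma~\ref{bare point} to reduce everything to excluding a single transverse crossing of a rampart $R\subseteq H=n_0^\perp$ at a point $x\in\relint(\seg{pq})$ are all sound. Your device of taking $U=\Span_\reals(C)$, so that the outer path $\seg{p^+q^-}$ lies in $\relint(C)$ and crosses nothing of $\D\setminus U$, and then contradicting Proposition~\ref{consistent restrict}, is a genuinely different setup from the paper's (which compares two paths generic for all of $\D_k$ and tests on $z^m$ for $m$ in the intersection of all rational hyperplanes containing $\seg{pq}$). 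But the contradiction is never actually produced, and the missing piece is not ``remaining technical care''---it is the substance of the proof.

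The problem is the inner path. Since you have shown $R\cap\relint(C)=\emptyset$, any path from $p^+$ to $q^-$ that meets $R$ must leave $\relint(C)$ near $\seg{pq}$ and come back, and its excursion is anchored at the \emph{arbitrary} points $p,q\in C$. First, to conclude that its path-ordered product is not the identity you invoke the bookkeeping of Lemmas~\ref{tech lem} and~\ref{tech lem detailed}, but the former requires that no rampart be crossed twice and the latter requires an honest line segment; your perturbed path is neither, and the portions near $p$ and $q$ are exactly where control fails, because $p$ and $q$ may lie on many walls of $\D\setminus U$ transverse to $\seg{pq}$ whose crossings (possibly repeated) the bookkeeping cannot absorb. (Nor do the lemmas apply verbatim to the truncated diagram $\D\setminus U$, whose minimal support is not established; one must at least know that the \emph{first} wall of $\D\setminus U$ met by the path is met at a general point where the product of wall functions is $\neq1$, which fails if the path first grazes walls near $p^+$.) Second, the existence of a perturbation \emph{inside $U$} that still meets $R$ is not guaranteed by $R$ being full-dimensional in $H$: the crossing point $x$ lies in $\partial_U C$, and $R\cap U$ can fail to be full-dimensional in $H\cap U$ near $x$ (e.g.\ $H\cap U$ can meet $R$ only at $x$), in which case every generic path in $U$ misses $R$ and your contradiction evaporates; handling this requires a further appeal to Lemma~\ref{bare point} and a new case analysis. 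The paper's proof is built precisely to avoid both difficulties: it replaces $p,q$ by points $s_1,s_2\in\relint(\seg{pq})$ lying on no rational hyperplane except those containing $\seg{pq}$, constructs the rectangle $u_1u_2v_2v_1$ inside a triangle with apex $r\in\relint(C)$ so that the inner path is a genuine line segment $\seg{v_1v_2}$ (to which Lemma~\ref{tech lem detailed} applies) joined to the outer path by short segments that provably cross only walls whose hyperplanes contain $\seg{pq}$, and tests on $z^m$ for $m$ in the intersection of those hyperplanes so that such crossings act trivially. Without an equivalent of these choices, your final step does not go through.
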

\begin{proof}
If $p=q$, then the assertion is trivial, so assume $p\neq q$.
Proposition~\ref{relint class} says that $C$ is the closure of a unique $\D$-class $C'\supseteq\relint(C)$.
Proposition~\ref{D convex} says that $\seg{pq}$ is contained in $C$.
We will show that $\relint(\seg{pq})$ is contained in some $\D$-class, but this class may not be~$C'$.

Let $r$ be any point in $\relint(C)$.
If $r$ is on the line containing $p$ and $q$, then the assertion follows immediately from Propositions~\ref{cone definite} and~\ref{relint class}, so assume not.
Proposition~\ref{cone definite} implies that the entire triangle $T$ with vertices $p$, $q$, and $r$, except possibly the segment $\seg{pq}$, is contained in $\relint(C)$ and therefore in $C'$.

Let $s_1$ and $s_2$ be distinct points in $\relint(\seg{pq})$ such that for $i\in\set{1,2}$, 
\begin{enumerate}[(i)]
\item \label{si rat}
Every rational hyperplane containing $s_i$ also contains $\seg{pq}$. 
\item \label{si int}
No two rational hyperplanes have their intersection contained in the hyperplane containing $s_i$ and orthogonal to $\seg{pq}$.
\end{enumerate}
This is possible because the two requirements on $s_i$ force $s_i$ to avoid a countable number of points on $\seg{pq}$.  
Thus the possibilities for $s_i$ form a dense subset of $\seg{pq}$.

We first claim that the line segment $\seg{s_1s_2}$ intersects no wall of $\D$ transversely.
Most of the proof consists of checking this claim.
(As before, we include, under the description ``intersecting a wall transversely'' the case where $\seg{s_1s_2}$ passes through the relative boundary of the wall while passing transversely through the hyperplane of the wall.)
It is enough to prove the claim for each $\D_k$.

Fix $k\ge1$.
Since $\D_k$ is finite and by \eqref{si rat}, there exists $\ep>0$ such that, for $i\in\set{1,2}$, the $\ep$-ball about $s_i$ does not intersect any wall of $\D_k$ whose hyperplane does not contain $\seg{pq}$.
Choose points $t_1$ and $t_2$ in the triangle $T$ with $t_1-s_1$ and $t_2-s_2$ both orthogonal to $\seg{pq}$ and with $|t_1-s_1|=|t_2-s_2|<\ep$.
(Here, $|t_i-s_i|$ is the usual Euclidean distance.)
In particular, $t_1$ and $t_2$ are in $C'$ and thus are $\D$-equivalent, so Proposition~\ref{D convex} says that the line segment $\seg{t_1t_2}$ is contained in $C'$. 
By Proposition~\ref{D Dk equiv}, $\Ram_{\D_k}(\,\cdot\,)$ is constant on $C'$, so every wall of $\D$ intersecting $\seg{t_1t_2}$ is contained in a rampart of $\D_k$ that contains $C'$, and thus $C$, and thus $\seg{pq}$.
There exists $\ep'$ such that the $\ep'$-neighborhood of $\seg{t_1t_2}$ doesn't intersect any wall of $\D_k$ whose hyperplane does not contain $\seg{pq}$.
Within the $\ep'$-neighborhood of $\seg{t_1t_2}$, we can choose points $u_1$ and $u_2$ such that 
\begin{enumerate}[(i)]\setcounter{enumi}{2}
\item \label{us rect}
$u_1$, $u_2$, $s_2$ and $s_1$ form a rectangle with $|u_1-s_1|=|u_2-s_2|<\ep$,
\item $\seg{u_1u_2}$ is not contained in any rational hyperplane containing $\seg{pq}$, 
\end{enumerate}
Now, by construction, 
\begin{enumerate}[(i)] \setcounter{enumi}{4}
\item \label{u free}
the line segment $\seg{u_1u_2}$ does not intersect any walls of $\D_k$.
\end{enumerate}

Now suppose for the sake of contradiction that $\seg{s_1s_2}$ intersects a wall of $\D_k$ transversely.
We choose points $v_1$ and $v_2$ such that 
\begin{enumerate}[(i)] \setcounter{enumi}{5}
\item \label{rz rect}
$v_1$, $v_2$, $s_2$ and $s_1$ form a rectangle with $|v_1-s_1|=|v_2-s_2|<\ep$,
\item \label{r no hyp}
$\seg{v_1v_2}$ is not contained in any rational hyperplane containing $\seg{pq}$, 
\item \label{r int}
$\seg{v_1v_2}$ intersects a wall of $\D_k$ transversely,
\item \label{r gen}
$\seg{v_1v_2}$ is a generic path for $\D_k$, and 
\item \label{qr gen}
The segments $\seg{u_1v_1}$ and $\seg{u_2v_2}$ are generic paths for $\D_k$, and neither intersects any wall of $\D_k$ whose hyperplane does not contain $\seg{pq}$.

\end{enumerate}
To see why this is possible, consider:
Satisfying \eqref{rz rect} and \eqref{r no hyp} amounts to choosing a direction orthogonal to $\seg{pq}$ that avoids rational hyperplanes containing $\seg{pq}$ and then choosing a length $|v_1-s_1|=|v_2-s_2|<\ep$.
Since $\seg{s_1s_2}$ intersects a wall of $\D_k$ transversely but $s_1$ and $s_2$ are not contained in any wall of $\D_k$ whose hyperplane does not contain $\seg{pq}$, we see that the \emph{relative interior} of $\seg{s_1s_2}$ intersects a wall of $\D_k$ transversely.
Thus if the length is chosen small enough, the direction can be chosen so as to satisfy \eqref{r int}.
For $i\in\set{1,2}$, since $u_i$ and $v_i$ are in the $\ep$-ball about $s_i$, the segment $\seg{u_iv_i}$ does not intersect any walls of $\D_k$ whose hyperplane does not contain $\seg{pq}$.
Thus $\seg{u_iv_i}$ can be made generic by avoiding all pairwise intersections $\d\cap\d'$ of nonparallel walls of $\D_k$ containing $\seg{pq}$ and all boundaries $\partial\d$ of walls of $\D_k$ containing $\seg{pq}$.
Any such $\d\cap\d'$ contains $\seg{pq}$ and is of codimension at least $2$, so the intersection of $\d\cap\d'$ with the hyperplane orthogonal to $\seg{pq}$ and containing $s_i$ is of codimension at least $3$.
By \eqref{si int}, any such $\partial\d$, intersected with the hyperplane orthogonal to $\seg{pq}$ at $s_i$, yields a set of codimension at least $3$.
Thus the set of points $v_i$ such that $\seg{u_iv_i}$ intersects such a $\d\cap\d'$ and/or such a $\partial\d$ has codimension at least $2$.
Also, $\seg{v_1v_2}$ is generic if it misses all intersections of walls and all boundaries of walls.
This amounts to choosing the $v_i$ avoid a finite collection of sets of codimension at least $2$.
We can vary the $v_i$ within a set of codimension $1$, so we can satisfy \eqref{r gen} and \eqref{qr gen} by changing the chosen direction and length slightly without losing \eqref{rz rect}, \eqref{r no hyp}, and \eqref{r int}.

Let $U$ be the intersection of all hyperplanes $n^\perp\subset V^*$ containing $\seg{pq}$ for $n\in N^+$.
(Possibly there are no such hyperplanes, in which case $U=V^*$.)
By \eqref{r int}, the segment $\seg{v_1v_2}$ crosses a wall $(\d,f_\d)$ transversely.  
Since $\seg{v_1v_2}$ is parallel to $\seg{pq}$, the wall $\d$ does not contain $\seg{pq}$, and therefore, because $U$ contains $\seg{pq}$, the hyperplane $H$ containing $\d$ does not contain $U$.
Since $U$ is rational, there exists a nonzero $m\in M^\circ\cap (U\setminus H)$.
Let $\gamma'$ be the path obtained by concatenating the segments $\seg{v_1u_1}$, $\seg{u_1u_2}$, and $\seg{u_2v_2}$.
By \eqref{qr gen}, because $m\in U$, and since $\seg{u_1u_2}$ does not intersect any walls of $\D_k$, we have $\p_{\gamma',\D_k}(z^m)=z^m$.
On the other hand, by \eqref{r gen}, the segment $\seg{v_1v_2}$ satisfies the hypothesis of Lemma~\ref{tech lem detailed}.
That lemma asserts that $\p_{\seg{v_1v_2},\D_k}(z^m)\neq z^m$.
We have contradicted the consistency of $\D_k$, and thus, by Proposition~\ref{Dk consist}, contradicted the consistency of $\D$.
This contradiction proves that $\seg{s_1s_2}$ intersects no wall of $\D_k$ transversely and thus proves the claim that $\seg{s_1s_2}$ intersects no wall of $\D$ transversely.

We next claim that $\Ram_\D(s)$ is constant for $s\in\seg{s_1s_2}$.
Suppose to the contrary.
Then there exists $k\ge 1$ such that $\Ram_{\D_k}(s)$ is not constant for $s\in\seg{s_1s_2}$.
By the first claim, the variation in the function $\Ram_{\D_k}(s)$ involves only ramparts whose hyperplane contains $\seg{s_1s_2}$.
Ramparts in $\D_k$ are closed, so up to swapping $s_1$ and $s_2$ there is a rampart $R$ and a point $s\in R\cap\seg{s_1s_2}$ such that $s+\ep (s_1-s_2)\not\in R$ for all small enough $\ep>0$.  
Just as in the proof of Proposition~\ref{D convex}, we use Lemma~\ref{bare point} to conclude that $\seg{s_1s_2}$ intersects a wall of $\D_k$ transversely at $s$.
This contradicts the first claim, and we have proved the claim that $\Ram_\D(s)$ is constant for $s\in\seg{s_1s_2}$.

This second claim says that, by definition, $s_1$ and $s_2$ are $\D$-equivalent.
This is true for any two points chosen from a dense subset of $\seg{pq}$.
Since $\D$-classes are convex by Proposition~\ref{D convex}, the relative interior of $\seg{pq}$ is contained in a $\D$-class.
\end{proof}

We now complete the proof of the main result of this section: When $\D$ is a consistent scattering diagram with minimal support, $\Fan(\D)$ is a complete fan.

\begin{proof}[Proof of Theorem~\ref{scat fan}]
Completeness is immediate because each element of $V^*$ is in some $\D$-class and thus in some $\D$-cone.
By Lemma~\ref{min sup}, we can assume, up to equivalence of scattering diagrams without changing the $\D$-equivalence relation, that each $\D_k$ has minimal support.
Proposition~\ref{D Dk equiv} says that two points are $\D$-equivalent if and only if they are $\D_k$-equivalent for all $k\ge1$.
If $\ell>k\ge1$, then $(\D_\ell)_k=\D_k$, so Proposition~\ref{D Dk equiv} also says that if two points are $\D_\ell$-equivalent then they are $\D_k$-equivalent.
Thus, in light of Proposition~\ref{lim fan}, we only need to prove, for a \emph{finite} consistent scattering diagram~$\D$ (possibly working modulo $\m^{k+1}$ for some $k\ge1$) with minimal support, that each $\D$-class is a relatively open convex cone and that $\Fan(\D)$ is a fan.

Proposition~\ref{D convex} says that $\D$-classes are convex cones, and we now check relative openness.
We can obtain any $\D$-class by choosing a set $\R$ of ramparts, taking the intersection $\cap\R$, removing all points contained in ramparts not in $\R$ and choosing a component of what remains.
Since $\D$ is finite, $\cap\R$ is an intersection of finite unions of closed sets, and thus closed.
Lemma~\ref{bare point} implies that the every point in the relative boundary of $\cap\R$ is removed.
(Given $p$ in the relative boundary, there exists $v$ in the linear span of $\cap\R$ such that $p+\ep v\not\in \cap R$ for all small enough $\ep>0$.
This $p$ is also in the relative boundary of some rampart $R$ in $\R$. 
Apply Lemma~\ref{bare point} to this $R$, $p$, and $v$.)
Since we are removing a finite number of walls, all of which are closed, each path-connected component is relatively open, as desired.

We verified in Proposition~\ref{D convex} that each element of $\Fan(\D)$ is a convex cone, and $\Fan(\D)$ is closed under passing to faces, by construction.
As explained in the proof of Proposition~\ref{lim fan}, to complete the proof that $\Fan(\D)$ is a fan, it is enough to check the intersections of \emph{maximal} cones.
Each maximal cone is a $\D$-cone, so we check that for any two $\D$-cones $C_1$ and $C_2$, the intersection $C_1\cap C_2$ is a face of $C_1$.

Suppose $L$ is a line segment contained in $C_1$ with $\relint(L)\cap C_2\neq\emptyset$.
We need to show that $L\subseteq C_2$, but since $C_2$ is closed, $\relint(L)\subseteq C_2$ is enough.

Proposition~\ref{seg int} says that $\relint(L)$ is contained in a $\D$-class, so that $\Ram_\D(\,\cdot\,)$ is constant on $\relint(L)$.
Let $q$ be any point in $C_2\cap\relint(L)$ and let $p$ be a point in $\relint(C_2)$.
Appealing to Proposition~\ref{relint class}, let $C_2'$ be the unique $\D$-class whose closure is $C_2$.
By Proposition~\ref{cone definite}, the segment $\seg{pq}$, except possibly the point $q$, is contained in $\relint(C_2)$ and thus by Proposition~\ref{relint class}, contained in $C_2'$.

Let $r$ be any point in $\relint(L)$.
Since $q$ and $r$ are in $\relint(L)$, since $\Ram_\D(\,\cdot\,)$ is constant on $\relint(L)$, and since $\D$ is finite, there exists $\ep>0$ such that every wall intersecting the $\ep$-neighborhood of $\relint(\seg{qr})$ is contained in a rampart containing $\seg{qr}$.
Since the segment $\seg{pq}$, except possibly $q$, is contained in $C_2'$, we know that $\Ram_\D(\,\cdot\,)$ is constant on $\seg{pq}$ except possibly at $q$.

Now consider a differentiable path $\gamma$ from $p$ to $r$ that follows $\seg{pq}$ until it comes within $\ep$ of the point $q$ and then goes to $r$, staying within the triangle with vertices $p$, $q$, and $r$ and staying within $\ep$ of $\seg{qr}$.
If $\Ram_\D(\,\cdot\,)$ is not constant on $\gamma$, then any changes in $\Ram_\D(\,\cdot\,)$ occur after $\gamma$ diverges from $\seg{pq}$.
But after $\gamma$ diverges from $\seg{pq}$, it is within $\ep$ of $\relint(\seg{qr})$, so all ramparts containing points in $\gamma$ also contain $\seg{qr}$.
If $\gamma$ enters or leaves a rampart (except possibly at $r$), Lemma~\ref{bare point} implies that it crosses a wall transversely.
But no wall whose rampart contains $\seg{qr}$ and a point on $\gamma$ (except possibly $r$) can intersect $\gamma$ transversely, because $\gamma$ is in the triangle with vertices $p$, $q$, and $r$.
We see that $\Ram_\D(\,\cdot\,)$ is constant on $\gamma$, except possibly at $r$.
We conclude that $\gamma$ (except possibly the point $r$) is in $C'_2$, so $r$ is in $C_2$.
This was true for arbitrary $r\in\relint(L)$, so $\relint(L)\subseteq C_2$.
\end{proof}

\section{Cluster scattering diagrams}\label{clus scat sec}
In this section, we quote from \cite{GHKK} the definition of cluster scattering diagrams and recast a result from \cite{GHKK} on mutation of cluster scattering diagrams.
We then review the definition of mutation fans and prove that cluster scattering fans refine mutation fans.

The initial data for a cluster scattering diagram, described in Section~\ref{scat sec}, essentially amounts to the choice of $I$, $I_\uf$, a basis $\s=(e_i:i\in I_\uf)$ for a real vector space $V$ and the matrix ${\tB=[\epsilon_{ij}]_{i\in I_\uf,j\in I}}$.
(We say ``essentially'' here because the quantities $\epsilon_{ij}$ are not determined for $i,j\in I_\fr$.
However, these quantities are irrelevant for our purposes.) 
The requirements on $\tB$ are that it must have integer entries, that it must have rank $|I_\uf|$ and that there must exist positive integers $(d_i:i\in I_\uf)$ with $d_i\epsilon_{ij}=-d_j\epsilon_{ji}$ for all $i,j\in I_\uf$.
If the $d_i$ exist, then they are uniquely determined by $\tB$ and the requirement that $\gcd_{i\in I}(d_i)=1$.
The square submatrix ${B=[\epsilon_{ij}]_{i,j\in I_\uf}}$ is called an \newword{exchange matrix} and $\tB$ is called an \newword{extended exchange matrix}.

The \newword{cluster scattering diagram} associated to the extended exchange matrix $\tB$ and the basis $\s$ is the scattering diagram $\Scat(\tB,\s)$ whose existence and uniqueness (up to equivalence) is guaranteed in the following theorem \cite[Theorem~1.12]{GHKK}.

\begin{theorem}\label{key scat}  
There exists a consistent scattering diagram $\Scat(\tB,\s)$ containing $\set{(e_i^\perp,1+\zeta_i):i\in I_\uf}$ such that $\Scat(\tB,\s)\setminus\set{(e_i^\perp,1+\zeta_i):i\in I_\uf}$ consists only of outgoing walls.
These conditions uniquely define $\Scat(\tB,\s)$ up to equivalence.
\end{theorem}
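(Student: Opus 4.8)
The plan is to follow the order-by-order construction of Gross--Hacking--Keel--Kontsevich; passing from their ambient space to $V^*$ costs nothing, since by Remark~\ref{useless dimensions} it affects consistent scattering diagrams only up to equivalence. By Proposition~\ref{Dk consist} it suffices to build, for each $k\ge1$, a finite set of walls whose union with the incoming walls $\set{(e_i^\perp,1+\zeta_i):i\in I_\uf}$ is consistent modulo $\m^{k+1}$, compatibly as $k$ increases, with all walls other than the $(e_i^\perp,1+\zeta_i)$ outgoing. I would do this by induction on $k$, at stage $k$ carrying a diagram $\D^{(k)}$ consistent modulo $\m^{k+1}$. The base case $k=1$ is the observation that $\set{(e_i^\perp,1+\zeta_i):i\in I_\uf}$ is already consistent modulo $\m^2$: every wall-crossing automorphism is trivial to the order that could produce a discrepancy, so a direct computation around any small loop gives the identity.

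For the inductive step, suppose $\D^{(k)}$ is consistent modulo $\m^{k+1}$. I want to adjoin finitely many walls with attached functions congruent to $1$ modulo $\m^{k+1}$, so that the result is consistent modulo $\m^{k+2}$; such walls do not disturb consistency to the previous order. The failure of consistency at the new order is measured by computing, for a small loop $\gamma$ around a codimension-$2$ piece of $\Supp(\D^{(k)})$, the path-ordered product $\p_{\gamma,\D^{(k)}}$: it is the identity modulo $\m^{k+1}$, so modulo $\m^{k+2}$ it is $\exp(h)$ for some $h$ in the degree-$(k+1)$ part of the relevant pronilpotent Lie algebra of derivations of $\k[[\zeta]]$ (the formalism of \cite{GHKK}). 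Standard arguments---the cocycle property of this obstruction, the fact that walls lying in a common hyperplane commute, and the Baker--Campbell--Hausdorff formula in the pronilpotent setting---show that the obstruction is cancelled by adjoining a finite collection of walls, each supported on a hyperplane $n_0^\perp$ with $\zeta^{n_0}$ of degree $k+1$, and that these new walls may be taken \emph{outgoing}. Setting $\D^{(k+1)}$ to be $\D^{(k)}$ together with these walls completes the step, and $\bigcup_k\D^{(k)}$ is the desired $\Scat(\tB,\s)$, which is consistent by Proposition~\ref{Dk consist}.

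For uniqueness I would again induct on $k$, showing that any two consistent diagrams $\D,\D'$ containing the $(e_i^\perp,1+\zeta_i)$ with all other walls outgoing agree modulo $\m^{k+1}$; by Lemma~\ref{lem1.9} this is the same as saying $f_p(\D)=f_p(\D')$ for all general $p$ to that order. The base case holds because the only walls with attached function nontrivial modulo $\m^2$ lie in the hyperplanes $e_i^\perp$, so the mod-$\m^2$ data of $\D$ and $\D'$ agree up to equivalence. Assume agreement modulo $\m^{k+1}$. After replacing $\D$ and $\D'$ by equivalent minimal-support diagrams via Proposition~\ref{min sup}, the discrepancy at order $k+1$ is encoded by a consistent scattering diagram, nontrivial only in degree $k+1$, \emph{all of whose walls are outgoing}---the incoming walls $e_i^\perp$ are common to $\D$ and $\D'$ and contribute only in degree $1<k+1$. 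The crux is then the lemma that a consistent scattering diagram all of whose walls are outgoing is equivalent to the empty one; granting this, $\D$ and $\D'$ agree modulo $\m^{k+2}$ and the induction closes.

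I expect the main obstacle to be the algebraic heart of both halves. In the existence step this is the verification that the order-$(k+1)$ obstruction is a well-defined Lie-algebra element that can be written as a sum of contributions of \emph{outgoing} walls; this is exactly where the pronilpotent machinery of \cite{GHKK} is essential, and it does not simplify in the cluster setting. In the uniqueness step it is the lemma ``consistent plus all walls outgoing implies trivial'', which is genuinely subtle: the outgoing normalization in the theorem is essential, as without it the consistent diagrams containing the prescribed incoming walls are far from unique---consistent with the wild behavior of scattering fans noted in Remark~\ref{crazy fan}. I would reproduce the \cite{GHKK} proof of this lemma, which is itself an order-by-order argument exploiting that an outgoing wall along a codimension-$2$ locus cannot be the sole contribution to a small loop around that locus, rather than seek a shortcut.
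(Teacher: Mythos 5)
The paper does not prove this statement at all: it is imported verbatim as \cite[Theorem~1.12]{GHKK}, so there is no internal argument to compare yours against. Your sketch is a faithful outline of the proof that actually appears in \cite{GHKK} (descending from the Kontsevich--Soibelman order-by-order construction): build $\D^{(k)}$ consistent modulo $\m^{k+1}$, cancel the degree-$(k+1)$ obstruction around codimension-$2$ joints by adjoining finitely many outgoing walls with attached functions $\equiv 1 \pmod{\m^{k+1}}$, and prove uniqueness by reducing to the lemma that a consistent diagram that is trivial to order $k$ and has only outgoing walls is equivalent to the trivial one. Your use of Proposition~\ref{Dk consist} to reduce to finite order, of Remark~\ref{useless dimensions} to justify working in $V^*$, and of Lemma~\ref{lem1.9} and Proposition~\ref{min sup} in the uniqueness step are all legitimate within this paper's framework.

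That said, the proposal should be read as an outline rather than a proof. The two steps you yourself identify as ``the algebraic heart''---(a) that the order-$(k+1)$ obstruction is a well-defined element of the pronilpotent Lie algebra of derivations and can be cancelled by a finite collection of walls \emph{that may be taken outgoing}, and (b) that a consistent scattering diagram all of whose walls are outgoing is equivalent to the empty one---constitute essentially the entire mathematical content of the theorem, and you explicitly defer both to \cite{GHKK}. Deferring them is exactly what the paper does, so there is no gap relative to the paper; but nothing in the proposal goes beyond the citation. One minor caution if you were to flesh it out: checking consistency only on small loops around codimension-$2$ joints does suffice, but that reduction itself requires an argument (any generic loop can be deformed through such joints), and the sign/ordering bookkeeping in the Baker--Campbell--Hausdorff step is where the outgoing normalization is actually used---it is not automatic that the correcting walls avoid the direction $p^*(n_0)$.
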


Proposition~\ref{coeffs don't matter}, in the language of exchange matrices and extended exchange matrices, says that as long as $\tB$ has full rank $|I_\uf|$, the structure of the scattering diagram depends only on $B$.
More specifically, if two extended exchange matrices both extend the same exchange matrix, the scattering diagram of one can be obtained from the scattering diagram of the other by a global change of variables in all of the power series $f_\d$.
In particular, assuming that $\Scat(\tB,\s)$ is taken with minimal support, the scattering fan $\Fan(\Scat(\tB,\s))$ depends only on $B$ and $\s$.
We assume a standard choice of $\s$ and write $\ScatFan(B)$ for $\Fan(\Scat(\tB,\s))$.
This is the \newword{cluster scattering fan} for $B$.

\subsection{Mutation of cluster scattering diagrams}\label{mut sec}
The crucial operation on exchange matrices is \newword{mutation}.
We define mutation on any matrix $[b_{ij}]_{i\in R,j\in C}$ such that $I_\uf\subseteq R$ and $I_\uf\subseteq C$.
For any $k\in I_\uf$, define $\mu_k([b_{ij}])$ to be the matrix $[b'_{ij}]_{i\in R,j\in C}$ with
\begin{equation}\label{b mut}
b_{ij}'=\left\lbrace\!\!\begin{array}{ll}
-b_{ij}&\mbox{if }i=k\mbox{ or }j=k;\\
b_{ij}+\sgn(b_{kj})\,[b_{ik}b_{kj}]_+&\mbox{otherwise.}
\end{array}\right.
\end{equation}
Here $\sgn(b)=\frac{b}{|b|}$ if $b\neq0$, or $\sgn(b)=0$ if $b=0$.
Also, $[a]_+$ means $\max(a,0)$.

The map $\mu_k$ is an involution.
Furthermore, when we apply $\mu_k$ to $\tB$ to obtain $\mu_k(\tB)=[\ep'_{ij}]_{i\in I_\uf,j\in I}$, the integers $(d_i:i\in I_\uf)$ still have the property that $d_i\ep'_{ij}=-d_j\ep'_{ji}$ for all $i,j\in I_\uf$.
Given a finite sequence $\ks=k_q,\ldots,k_1$ of indices in~$I_\uf$, we define $\mu_\ks$ to be $\mu_{k_q}\circ\mu_{k_{q-1}}\circ\cdots\circ\mu_{k_1}$.
%

A natural and crucial question is:  What happens to the cluster scattering diagram when we mutate $\tB$?
But there are at least two ways to interpret the question.
One interpretation assumes that we mutate $\tB$ but fix all of the initial data that does not depend on the basis $\s$ (the ``fixed data'').
That is, when we mutate at index $k$, we replace $\s$ by a new basis $\mu_k(\s)$ such that $(\mu_k(\tB),\mu_k(\s))$ defines the same lattice $N$ and pairing $\set{\,\cdot\,,\,\cdot\,}$ as $(\tB,\s)$.
Another interpretation assumes that we mutate $\tB$ and fix the basis $\s$, thus still fixing the lattice $N$ but now changing the form $\set{\,\cdot\,,\,\cdot\,}$.

The question was answered, under the first interpretation, as \cite[Theorem~1.24]{GHKK}, and we will show below how \cite[Theorem~1.24]{GHKK} leads to an answer to the question under the second interpretation.
(In the skew-symmetric case, this was already done as \cite[Lemma~5.2.1]{Muller} and in the rank-$2$ case it was done in \cite[Section~3]{CGMMRSW}.)
We are interested in the second interpretation for two reasons.
First, we want to think of the scattering diagram as depending only on $\tB$, so we take away the choice of basis by fixing $\s$ once and for all.
Second, the transformation from $\Scat(\tB,\s)$ to $\Scat(\mu_k(\tB),\s)$ is given by the piecewise linear map identified in \mbox{\cite[(7.18)]{ca4}} as describing initial-seed mutation of $\g$-vectors and used in \cite{universal} to understand universal geometric coefficients.
In particular, we point out in \cite[Section~2.3]{scatcomb} that the answer under the second interpretation proves one of the main conjectures of \cite{ca4}, namely \cite[Conjecture~7.12]{ca4}.
(A global transpose is necessary to make the connection to $\g$-vectors, as explained in \cite[Section~2]{scatcomb}.)

We now describe the operation that takes $\Scat(\tB,\s)$ to $\Scat(\mu_k(\tB),\s)$.
We have borrowed notational conventions from \cite{NZ} and have borrowed elements of the description of the map from \cite[Lemma~5.2.1]{Muller}.

For each $k\in I_\uf$, define $J_k$ to be the square matrix, indexed by $I_\uf$, that agrees with the identity matrix except that the $kk$ entry is $-1$.
For any matrix $A$, define $A^{k\bullet}$ to be the matrix that agrees with $A$ in row $k$ and has zeroes everywhere else.
We will use these matrices to define some linear maps on $V$ and on $V^*$.
We treat vectors in $V$ as column vectors giving coordinates with respect to the $e_i$, so that matrices act on $V$ from the left, and we treat vectors in $V^*$ as row vectors giving coordinates with respect to the $f_i$, so that matrices act on $V^*$ from the right.

Since we want a map on \emph{equivalence classes} of scattering diagrams, we can assume up to equivalence that the hyperplane $e_k^\perp$ does not intersect the relative interior of a wall except when that wall is contained in $e_k^\perp$.
(If the relative interior of some wall $(\d,f_\d)$ intersects $e_k^\perp$, replace that wall by two walls $(\d_+,f_\d)$ and $(\d_-,f_\d)$, where $\d_\pm=\set{v\in\d:\pm\br{v,e_k}\ge0}$.)
We define a map $M_k$ on walls that fixes any walls contained in $e_k^\perp$ and otherwise has
\begin{equation}
M_k(\d,f_\d)=\begin{cases}
M_k^-(\d,f_\d)&\text{if }\d\subseteq \set{v\in V^*:\br{v,e_k}\le0},\text{ or}\\
M_k^+(\d,f_\d)&\text{if }\d\subseteq \set{v\in V^*:\br{v,e_k}\ge0},\\
\end{cases}
\end{equation}
where 
\begin{align}  
\label{Mk+}
M_k^-\bigl(\d,f_\d(\zeta^{n_0})\bigr)&=\bigl(\d(J_k+[-B^{k\bullet}]_+),f_\d(\tilde\zeta^{(J_k+[(B^T)^{k\bullet}]_+)n_0})\bigr)\\
\label{Mk-}
M_k^+\bigl(\d,f_\d(\zeta^{n_0})\bigr)&= \bigl(\d(J_k+[B^{k\bullet}]_+),f_\d(\tilde\zeta^{(J_k+[(-B^T)^{k\bullet}]_+)n_0})\bigr).
\end{align}
Here $f_\d(a)$ means the result of replacing $\zeta^{n_0}$ by $a$ in the formal power series $f_\d(\zeta^{n_0})$, and $\tilde\zeta=(\tilde\zeta_i:i\in I_\uf)$ are the quantities defined in Table~\ref{init data} using $\mu_k(\tilde B)$.

We reuse the symbol $M_k$ for a map on scattering diagrams by defining 
\begin{equation}
M_k(\D)=\set{M_k(\d,f_\d):(\d,f_\d)\in\D}.
\end{equation}

\begin{thm}\label{mut thm}
$\Scat(\mu_k(\tB),\s)$ is equivalent to $M_k(\Scat(\tB,\s))$ for any extended exchange matrix~$\tB$ and any $k\in I_\uf$.
\end{thm}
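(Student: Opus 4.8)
The plan is to deduce Theorem~\ref{mut thm} from \cite[Theorem~1.24]{GHKK}, which answers the analogous question under the \emph{first} interpretation of mutation (mutate $\tB$ and the basis $\s$ together, keeping the fixed data of Table~\ref{init data} unchanged), combined with Proposition~\ref{coeffs don't matter}, which lets one trade a change of basis and coefficient data for a global change of variables in the wall functions. The conceptual point is that the first and second interpretations describe essentially the same construction read off in two different coordinate systems: passing from the GHKK-mutated seed back to the fixed seed $\s$ turns ``mutate $\tB$ and $\s$, keep the fixed data'' into ``mutate $\tB$, keep $\s$, change the form'' together with a rescrambling of coefficients. So the real content of Theorem~\ref{mut thm} is the explicit formula \eqref{Mk+}--\eqref{Mk-} for the resulting map once everything is expressed with respect to $\s$.

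Concretely, I would first recall \cite[Theorem~1.24]{GHKK}: it provides a piecewise-linear homeomorphism, linear on each of the two closed halfspaces cut out by $e_k^\perp$, carrying $\Scat(\tB,\s)$ to the cluster scattering diagram of the GHKK-mutated seed, together with a prescription for transforming the wall functions; using Remark~\ref{useless dimensions} I would transport this statement from GHKK's larger ambient space to $V^*$. Next I would compose this with the linear identification of $V^*$ with itself matching the GHKK-mutated seed with $\s$ on $N_\uf$, and with the change of variables $\zeta_i\mapsto\tilde\zeta_i$ supplied by Proposition~\ref{coeffs don't matter} (legitimate because the GHKK-mutated data and $(\mu_k(\tB),\s)$ have the same exchange matrix $\mu_k(B)$). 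The remaining task is to check that this composite is exactly $M_k$: the cone transformation acts on the right on row vectors of $V^*$ through $J_k+[\pm B^{k\bullet}]_+$, while the monomial transformation acts on the left on $n_0\in N_\uf$ through $J_k+[\mp (B^T)^{k\bullet}]_+$, with the two signs correlated according to the side of $e_k^\perp$, and these two maps must be dual in precisely the way that preserves wall-crossing automorphisms. I would carry this out by following the skew-symmetric computation of \cite[Lemma~5.2.1]{Muller} (compare the rank-$2$ case in \cite[Section~3]{CGMMRSW} and the $\g$-vector mutation formula \cite[(7.18)]{ca4}), checking at each step that skew-symmetrizability introduces nothing new: the multipliers $d_i$ enter the wall-crossing automorphism only through the normal vector $\pm n_0'$ primitive in $N^\circ$, and this is compatible with the change of variables because the $d_i$ are mutation-invariant.

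Finally, since both \cite[Theorem~1.24]{GHKK} and Proposition~\ref{coeffs don't matter} preserve consistency, the incoming/outgoing dichotomy, and the distinguished walls $(e_i^\perp,1+\zeta_i)$, the diagram $M_k(\Scat(\tB,\s))$ satisfies the characterization in Theorem~\ref{key scat} for the data $(\mu_k(\tB),\s)$, and the uniqueness clause of Theorem~\ref{key scat} then yields $M_k(\Scat(\tB,\s))\equiv\Scat(\mu_k(\tB),\s)$. I expect the main obstacle to be the bookkeeping in the matching step: keeping all transposes, signs, and row-versus-column conventions straight across the two papers' normalizations, and confirming that the branches $M_k^{-}$ and $M_k^{+}$ agree along $e_k^\perp$ — which is exactly why the walls meeting $e_k^\perp$ in their relative interior were split before $M_k$ was defined.
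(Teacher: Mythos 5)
Your proposal is correct and follows essentially the same route as the paper: quote \cite[Theorem~1.24]{GHKK} (Theorem~\ref{mut GHKK}), compose with the linear change of basis taking $\mu_k(\s)$ back to $\s$ (acting dually on $V^*$ and on monomials), verify by direct computation that the composite on each side of $e_k^\perp$ is exactly $M_k^{\mp}$, and invoke Proposition~\ref{coeffs don't matter} to supply the tildes. The only divergence is your closing appeal to the uniqueness clause of Theorem~\ref{key scat}, which the paper does not need (and which would require separately checking that the outgoing condition is preserved), since the basis-change rewriting of $\Scat(\mu_k(\tB),\mu_k(\s))$ already \emph{is} $\Scat(\mu_k(\tB),\s)$ by construction.
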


Theorem~\ref{mut thm} will follow by simple (piecewise) linear algebra from \cite[Theorem~1.24]{GHKK}, which we quote below.
For each $k\in I_\uf$, we construct a scattering diagram $T_k(\Scat(\tB,\s))$ as follows:
The wall $(e_k^\perp,1+\zeta_k)$ is replaced by $(e_k^\perp,1+\zeta_k^{-1})$.
As in the definition of $M_k$, we can assume that $e_k^\perp$ intersects the relative interior of no other wall of $\Scat(\tB,\s)$.
If $\br{v,e_k}\le0$ for all points $v\in\d$, then $(\d,f_\d(\zeta^{n_0}))$ is fixed.
If $\br{v,e_k}\ge0$ for all points $v\in\d$, then $(\d,f_\d(\zeta^{n_0}))$ is replaced by $(\d',f_\d(\zeta^{n_0+\br{p^*(n_0),d_ke_k}e_k}))$, where $\d'$ is the image of $\d$ under the linear map $v\mapsto v+v_k\br{m,d_ke_k}$.
(To compare this with \cite[(1.23)]{GHKK}, it is useful to notice that $p^*(n_0+\br{p^*(n_0),d_ke_k}e_k)=p^*(n_0)+v_k\br{p^*(n_0),d_ke_k}$.)

Define $\mu_k(\s)=(e_i':i\in I)$ by
\begin{equation}
e'_i=\begin{cases}
e_i+[\epsilon_{ik}]_+e_k&\text{if }i\neq k,\text{ or}\\
-e_k&\text{if }i=k.
\end{cases}
\end{equation}
The following is \cite[Theorem~1.24]{GHKK}.

\begin{thm}\label{mut GHKK}
$\Scat(\mu_k(\tB),\mu_k(\s))$ is equivalent to $T_k(\Scat(\tB,\s))$ for any extended exchange matrix~$\tB$ and any $k\in I_\uf$.
\end{thm}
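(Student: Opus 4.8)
The strategy is the one built into Theorem~\ref{key scat}: its uniqueness clause. Write $\zeta'_i$ for the variables $\zeta_i$ of Table~\ref{init data} recomputed from the mutated initial data $(\mu_k(\tB),\mu_k(\s))$. It suffices to establish four things about $T_k(\Scat(\tB,\s))$: (i) it is a bona fide scattering diagram with respect to $(\mu_k(\tB),\mu_k(\s))$; (ii) it is consistent; (iii) up to equivalence it contains the walls $\set{((e'_i)^\perp,1+\zeta'_i):i\in I_\uf}$; and (iv) all of its remaining walls are outgoing. Once (i)--(iv) hold, the uniqueness in Theorem~\ref{key scat} forces $T_k(\Scat(\tB,\s))\equiv\Scat(\mu_k(\tB),\mu_k(\s))$, which is exactly the claim.

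Step (i) is linear-algebra bookkeeping. Having arranged, as in the construction of $T_k$, that $e_k^\perp$ meets the relative interior of no wall except those it contains, $T_k$ is the identity on the closed half-space $\set{v:\br{v,e_k}\le0}$ and a fixed linear shear on $\set{v:\br{v,e_k}\ge0}$, each fixing $e_k^\perp$ pointwise; hence it carries rational cones to rational cones and codimension-one cones to codimension-one cones. One then checks that the monomial substitution $\zeta^{n_0}\mapsto\zeta^{n_0+\br{p^*(n_0),d_ke_k}e_k}$ sends the primitive normal $n_0\in N^+$ of a wall lying on the positive side of $e_k^\perp$ to a vector in the positive cone $N^+$ for $\mu_k(\s)$, and that the finiteness condition (each $\D_\ell$ finite) is preserved since $T_k$ acts bijectively on walls. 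Thus $T_k(\Scat(\tB,\s))$ is a scattering diagram for $(\mu_k(\tB),\mu_k(\s))$.

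Step (ii), consistency, is the heart of the matter and the step I expect to be the main obstacle. Since on each of the two closed half-spaces $T_k$ is the tropicalization of a monomial change of variables, a direct comparison of wall-crossing automorphisms shows that the path-ordered product along the $T_k$-image of a path that remains in one half-space is the conjugate, by the corresponding ring isomorphism, of the path-ordered product of the original path; so consistency ``restricted to a half-space'' transfers across $T_k$. The delicate point is the gluing along $e_k^\perp$: for a generic path of $T_k(\Scat(\tB,\s))$ crossing $e_k^\perp$, one must match the composite of crossing the rampart $e_k^\perp$ (whose function has been changed from $1+\zeta_k$ to $1+\zeta_k^{-1}$) with the shear on the positive side against crossing $e_k^\perp$ in $\Scat(\tB,\s)$ followed by the identity on the negative side. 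This matching reduces to a direct computation expressing how crossing $e_k^\perp$ interacts with the shear, and it is exactly what forces the defining formulas for $T_k$ and for $\mu_k(\s)$. Carrying it out order by order, i.e.\ modulo $\m^{\ell+1}$ and invoking Proposition~\ref{Dk consist}, shows every generic loop of $T_k(\Scat(\tB,\s))$ has trivial path-ordered product, so the diagram is consistent.

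Finally, steps (iii)--(iv). By Theorem~\ref{key scat}, every wall of $\Scat(\tB,\s)$ other than $(e_i^\perp,1+\zeta_i)$, $i\in I_\uf$, is outgoing, and a check on each half-space shows that $T_k$ together with its monomial substitution carries the outgoing condition for $(\tB,\s)$ to the outgoing condition for $(\mu_k(\tB),\mu_k(\s))$; hence any wall of $T_k(\Scat(\tB,\s))$ not arising from one of the $(e_i^\perp,1+\zeta_i)$ is outgoing. For $i=k$ we have $(e'_k)^\perp=e_k^\perp$, and a short computation of $p^*$ under $\mu_k$ identifies $\zeta_k^{-1}$ with the monomial $\zeta'_k$, so $(e_k^\perp,1+\zeta_k^{-1})$ is the required wall $((e'_k)^\perp,1+\zeta'_k)$ read in the new coordinates. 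For $i\neq k$, $T_k$ bends $e_i^\perp$ along $e_k^\perp$, and one computes that the resulting broken hyperplane splits into a piece of $(e'_i)^\perp$ carrying the function $1+\zeta'_i$ together with an outgoing wall, the two pieces agreeing along $e_k^\perp$. It is cleanest to phrase (iii)--(iv) on the level of equivalence classes via Lemma~\ref{lem1.9}, comparing $f_p$ of $T_k(\Scat(\tB,\s))$ with $f_p$ of the scattering diagram assembled from the new incoming walls and the transformed outgoing walls, at every general point $p$. With (i)--(iv) in place, uniqueness in Theorem~\ref{key scat} completes the proof.
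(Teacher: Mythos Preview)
The paper does not actually prove this theorem: it is quoted verbatim as \cite[Theorem~1.24]{GHKK} and used as a black box to derive Theorem~\ref{mut thm}. So there is no ``paper's own proof'' to compare against here.

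That said, your strategy is the right one and is essentially the argument given in \cite{GHKK}: verify that $T_k(\Scat(\tB,\s))$ is a consistent scattering diagram for the mutated data that has the correct incoming walls and only outgoing walls otherwise, then invoke the uniqueness clause of Theorem~\ref{key scat}. Your handling of consistency by conjugating path-ordered products on each half-space and gluing along $e_k^\perp$ is the standard maneuver, and your observation that for $i\neq k$ the transformed $e_i^\perp$ splits into the new incoming wall $((e'_i)^\perp,1+\zeta'_i)$ plus an outgoing piece is the key bookkeeping step. One caution: the claim in step~(iv) that $T_k$ sends outgoing walls to outgoing walls is not automatic and is where the actual work lies; you have to unwind the definition of ``outgoing'' in both seeds and check case by case depending on the sign of $\br{\d,e_k}$ and whether $p^*(n_0)$ lands in the correct half-space after the shear. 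This is a genuine computation, not just a ``check,'' and in \cite{GHKK} it occupies most of the proof.
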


\begin{proof}[Proof of Theorem~\ref{mut thm}]
An easy computation shows that (as pointed out in \cite[Section~2]{GHK}), the basis $(d_i^{-1}(e'_i)^*,i\in I)$ for $M^\circ$ is $(f_i':i\in I)$, where 
\begin{equation}
f'_i=\begin{cases}
f_i&\text{if }i\neq k,\text{ or}\\
-f_k+\sum_{j\in I}[-\epsilon_{kj}]_+f_j&\text{if }i=k.
\end{cases}
\end{equation}
Theorem~\ref{mut GHKK} tells us how to write the cluster scattering diagram in terms of the basis $\mu_k(\s)=(e_i':i\in I)$ when $\set{\,\cdot\,,\,\cdot\,}$ is described in the basis $\mu_k(\s)$ by the matrix $\mu_k(\tB)$.
To get the scattering diagram $\Scat(\mu_k(\tB),\s)$, we need to rewrite this in terms of the basis $\s$ by applying the linear map that sends each $f_i'$ to $f_i$ and applying the corresponding map to monomials.
In the basis of the $f_i$, this map fixes $f_i$ for $i\neq k$ and sends $f_k$ to $-f_k+\sum_{j\in I}[-\epsilon_{kj}]_+f_j$.
The map on monomials sends each $\zeta^{e'_i}$ to $\zeta^{e_i}$ and thus sends $\zeta^{e_k}$ to $\zeta^{-e_k}$ and $\zeta^{e_i}$ to $\zeta^{e_i+[\epsilon_{ik}]_+e_k}$ for $i\neq k$. 

The wall $(e_k^\perp,1+\zeta_k)$ is sent by $T_k$ to $(e_k^\perp,1+\zeta_k^{-1})$ and the basis change restores it to $(e_k^\perp,1+\zeta_k)$.
In what remains, take $(\d,f_\d)$  to be a different wall in $\Scat(\tB,\s)$.

If $\br{v,e_k}\le0$ for all points in $\d$, then $T_k(\d,f_\d)=(\d,f_\d)$.
The basis change sends $(\d,f_\d)$ to $(\d',f_{\d'})$, where $\d'$ is the image of $\d$ under the map $v\mapsto v+\br{v,d_ke_k}{(-2f_k+\sum_{i\in I}[-\epsilon_{ki}]_+f_i)}$ and $f_{\d'}$ is obtained from $\d$ by replacing each monomial $\zeta^n$ by $\zeta^{n'}$, where $n'=n +(-2\br{e^*_k,n}+\sum_{i\in I}\br{e^*_i,n}[\epsilon_{ik}]_+)e_k$.

If $\br{v,e_k}\ge0$ for all points in $\d$, then the wall in $T_k(\d,f_\d)$ is the image of $\d$ under the map $v\mapsto v+v_k\br{m,d_ke_k}$.
Since $v_k=p^*(e_k)=\sum_{i\in I}\epsilon_{ki}f_i$, this map is ${v\mapsto v+\br{v,d_ke_k}\sum_{i\in I}\epsilon_{ki}f_i}$.
The change of basis sends this to a wall $\d'$ that is the image of $\d$ under the map $v\mapsto v+\br{v,d_ke_k}(-2f_k+\sum_{i\in I}[\epsilon_{ki}]_+f_i)$.
The function on $T_k(\d,f_\d)$ is obtained from $f_\d$ by replacing each $\zeta^{e_i}$ by $\zeta^{e_i+\br{p^*(e_i),d_ke_k}e_k}=\zeta^{e_i+\epsilon_{ik}e_k}$.
After the basis change, this sends $\zeta^{e_k}$ to $\zeta^{-e_k}$ and for $i\neq k$ sends $\zeta^{e_i}$ to $\zeta^{e_i+[-\epsilon_{ik}]_+e_k}$.
We have almost established that $\Scat(\mu_k(\tB),\s)$ is equivalent to $M_k(\Scat(\tB,\s))$, except that we are missing the ``tildes'' from $\tilde\zeta$ in \eqref{Mk+} and \eqref{Mk-}.
However, we can insert the tildes by Proposition~\ref{coeffs don't matter}.
\end{proof}

We can also think of $M_k$ as a piecewise linear map on $V^*$:
\begin{equation}
M_k(v)=\begin{cases}
M_k^-(v)=v\cdot(J_k+[-B^{k\bullet}]_+)&\text{if }\br{v,e_k}\le0,\text{ or}\\
M_k^+(v)=v\cdot(J_k+[B^{k\bullet}]_+)&\text{if }\br{v,e_k}\ge0.\\
\end{cases}
\end{equation}
We now explain how to describe the geometric action of $M_k$ in terms of matrix mutation.
As before, write $B$ for the exchange matrix ${[\epsilon_{ij}]_{i,j\in I_\uf}}$.

Given $B$ and a sequence $\kk$ of indices in $I_\uf$, the \newword{mutation map} $\eta_\kk^B:V^*\to V^*$ is defined as follows:
Given a vector $v=\sum_{i\in I_\uf}a_if_i\in V^*$, let $\mathring{B}$ be obtained from $B$ by adjoining $(a_i:i\in I_\uf)$ as an additional row below $B$.
Writing $(a'_i:i\in I_\uf)$ for the bottom row of $\mu_\kk(\mathring{B})$, we define $\eta_\kk^B(v)$ to be $\sum_{i\in I_\uf}a'_if_i\in V^*$.
This is a piecewise linear homeomorphism from $V^*$ to itself.
We write $\eta_k^B$ for $\eta_\kk^B$ when $\kk$ consists of a single index $k$.
In this case, for each $j\in I_\uf$:  
\begin{equation}\label{mutation map def}
a'_j=\begin{cases}
-a_k&\mbox{if }j=k;\\
a_j+a_kb_{kj}&\mbox{if $j\neq k$, $a_k\ge 0$ and $b_{kj}\ge 0$};\\
a_j-a_kb_{kj}&\mbox{if $j\neq k$, $a_k\le 0$ and $b_{kj}\le 0$};\\
a_j&\mbox{otherwise.}
\end{cases}
\end{equation}
The geometric action of $M_k$ on walls coincides with the mutation map $\eta^B_k$.
Since the hyperplane $e_k^\perp$ is a wall of $\Scat(\tB,\s)$, the action of $\eta^B_k$ is linear on all cones of $\ScatFan(B)$, and therefore for each sequence $\kk$, the action of $\eta^B_\kk$ is linear on all cones of $\ScatFan(B)$.
We record the following immediate corollary to Theorem~\ref{mut thm}.

\begin{cor}\label{mut scat fan}
For any exchange matrix~$B$ and any sequence $\kk$ of indices in $I_\uf$, the mutation map $\eta^B_\kk$ is a piecewise-linear isomorphism from $\ScatFan(B)$ to $\ScatFan(\mu_\kk(B))$.
\end{cor}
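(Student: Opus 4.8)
The plan is to prove the statement first for a sequence consisting of a single index---deducing it essentially immediately from Theorem~\ref{mut thm}---and then to obtain the general case by induction on the length of $\kk$.

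For a single index $k$, I would fix a representative $\D$ of $\Scat(\tB,\s)$ with minimal support, so that $\Fan(\D)=\ScatFan(B)$, and invoke Theorem~\ref{mut thm}: $\Scat(\mu_k(\tB),\s)$ is equivalent to $M_k(\D)$. Reading off \eqref{Mk+} and \eqref{Mk-}, one sees that the operation $M_k$ on walls transforms the underlying cone of each wall exactly by the piecewise-linear homeomorphism $M_k\colon V^*\to V^*$, the two cases being distinguished by which closed halfspace of $e_k^\perp$ contains the cone. Moreover $M_k$ acts on supports by a homeomorphism and on wall functions by an invertible monomial substitution, so by Lemma~\ref{lem1.9} it sends equivalent scattering diagrams to equivalent ones and minimal-support diagrams to minimal-support diagrams; it therefore transports the support, the ramparts, the $\D$-classes, and the $\D$-cones (with their faces) by $M_k$, so that $\ScatFan(\mu_k(B))=\Fan(M_k(\D))=M_k(\ScatFan(B))$ as fans. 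It then remains to see that this homeomorphism is a piecewise-linear \emph{isomorphism of fans}, i.e.\ linear on each cone. For this I would use that $(e_k^\perp,1+\zeta_k)$ is a wall of $\D$ whose support is the whole hyperplane $e_k^\perp$, so every $\D$-class---hence every cone of $\ScatFan(B)$---lies in one of the closed halfspaces $\set{v:\br{v,e_k}\le0}$ or $\set{v:\br{v,e_k}\ge0}$, on each of which $M_k$ is the linear map $v\mapsto v\cdot(J_k+[-B^{k\bullet}]_+)$, respectively $v\mapsto v\cdot(J_k+[B^{k\bullet}]_+)$. Comparing these with the branches of \eqref{mutation map def}---noting that $\br{v,e_k}\le0$ is equivalent to $a_k\le0$---identifies them as the two linear pieces of $\eta^B_k$, so $M_k=\eta^B_k$ and the single-index case is complete.

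For a general sequence $\kk=k_q,\dots,k_1$ I would argue by induction on $q$, the case $q=0$ being trivial since $\eta^B_\emptyset$ is the identity. For $q\ge1$, write $\kk'=k_{q-1},\dots,k_1$. Unwinding the row-adjoining definition of the mutation map (as in \cite{ca1,universal}) gives the standard identity $\eta^B_\kk=\eta^{\mu_{\kk'}(B)}_{k_q}\circ\eta^B_{\kk'}$, together with $\mu_\kk(B)=\mu_{k_q}(\mu_{\kk'}(B))$ (again an exchange matrix, as mutation preserves rank and skew-symmetrizability). By the inductive hypothesis, $\eta^B_{\kk'}$ is a piecewise-linear isomorphism from $\ScatFan(B)$ to $\ScatFan(\mu_{\kk'}(B))$; by the single-index case applied to $\mu_{\kk'}(B)$, the map $\eta^{\mu_{\kk'}(B)}_{k_q}$ is a piecewise-linear isomorphism from $\ScatFan(\mu_{\kk'}(B))$ to $\ScatFan(\mu_\kk(B))$. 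A composition of piecewise-linear fan isomorphisms in which the target fan of the first equals the domain fan of the second is again one---each cone of $\ScatFan(B)$ is carried linearly onto a cone of $\ScatFan(\mu_{\kk'}(B))$ and then linearly onto a cone of $\ScatFan(\mu_\kk(B))$, and bijectivity on cones is clear---so the induction closes.

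The real content is confined to the single-index step and is essentially bookkeeping. I expect the main obstacle to be matching the two descriptions of the relevant piecewise-linear map---$M_k$, written in $f_i$-coordinates through the matrices $J_k$ and $[\pm B^{k\bullet}]_+$ acting from the right, against $\eta^B_k$, defined by adjoining a row to $B$ and mutating---\emph{including} the check that their regions of linearity correspond. A secondary point is that, after passing to minimal support, the wall $e_k^\perp$ is genuinely present and fills its hyperplane; this is exactly what forces every cone of $\ScatFan(B)$ to lie on one side of $e_k^\perp$ and so makes $M_k$ linear on cones rather than merely piecewise linear. Both points are already laid out in the discussion preceding the corollary, so no genuinely new difficulty arises.
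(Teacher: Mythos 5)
Your proposal is correct and follows essentially the same route as the paper, which records this as an immediate corollary of Theorem~\ref{mut thm} after observing (in the preceding paragraph) exactly your two key points: that the geometric action of $M_k$ coincides with $\eta^B_k$, and that the presence of the wall $e_k^\perp$ forces linearity on each cone, with the general sequence handled by composition. Your write-up just makes explicit the bookkeeping (matching the matrices $J_k+[\pm B^{k\bullet}]_+$ against \eqref{mutation map def}, and the induction on the length of $\kk$) that the paper leaves implicit.
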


By definition, each hyperplane $e_i^\perp$ in $V^*$ is a wall of $\Scat(\tB,\s)$.
Since each wall of $\Scat(\tB,\s)$ is in the hyperplane perpendicular to a vector in $N^+$, no wall intersects the interior of the cone $\C^+=\set{p\in V^*:\br{p,e_i}\ge0\,\forall i\in I_\uf}$.
Since also each hyperplane $e_i^\perp$ is a wall, $\C^+$ is a cone in $\ScatFan(B)$.
Corollary~\ref{mut scat fan} says that for each $k\in I_\uf$, the map $\eta_k^B$ is a piecewise-linear isomorphism from $\ScatFan(B)$ to $\ScatFan(\mu_k(B))$.
Since $\C^+$ is also a cone of $\ScatFan(\mu_k(B))$ and since $\eta_k^B$ fixes the facet of $\C^+$ contained in $e_k^\perp$, we conclude that $\C^+$ shares this facet with another full-dimensional cone of $\ScatFan(B)$.

Two full-dimensional cones are \newword{adjacent} if they share a facet and \newword{transitively adjacent} if they are related in the transitive closure of the adjacency relation.
By induction, we see that every cone transitively adjacent to $\C^+$ in $\ScatFan(B)$ shares each of its facets with another cone of $\ScatFan(B)$.
Let $\ChamberFan(B)$ be the subfan of $\ScatFan(B)$ consisting of maximal cones that are transitively adjacent to~$\C^+$, together with all of their faces.
This fan is simplicial, and by \cite[Theorem~0.8]{GHKK}, it coincides with the fan of Fock-Goncharov cluster chambers and is isomorphic to the cluster complex associated to $B$.

As a further consequence of Corollary~\ref{mut scat fan}, we have the following.

\begin{cor}\label{mut clus fan}
For any exchange matrix~$B$ and any sequence $\kk$ of indices in $I_\uf$, the mutation map $\eta^B_\kk$ is a piecewise-linear isomorphism from $\ChamberFan(B)$ to $\ChamberFan(\mu_\kk(B))$.
\end{cor}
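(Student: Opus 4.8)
The plan is to bootstrap from the scattering-fan statement, Corollary~\ref{mut scat fan}, which already tells us that $\eta^B_\kk$ is a piecewise-linear isomorphism $\ScatFan(B)\to\ScatFan(\mu_\kk(B))$; the only genuinely new input needed is to keep track of the distinguished chamber $\C^+$. First I would reduce to a single mutation: from the construction of mutation maps via adjoining a row and the identity $\mu_\kk=\mu_{k_q}\circ\cdots\circ\mu_{k_1}$, one gets $\eta^B_{k_q,\dots,k_1}=\eta^{B_{q-1}}_{k_q}\circ\cdots\circ\eta^{B_1}_{k_2}\circ\eta^B_{k_1}$, where $B_j=\mu_{k_j}\cdots\mu_{k_1}(B)$. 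So it suffices to prove that $\eta^B_k\colon\ScatFan(B)\to\ScatFan(\mu_k(B))$ restricts to an isomorphism $\ChamberFan(B)\to\ChamberFan(\mu_k(B))$, and the general case then follows by composing and inducting on the length of $\kk$.

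For the single-mutation case, I would first record the easy facts that a piecewise-linear isomorphism of fans sends facets to facets (it is linear, hence dimension- and face-preserving, on each cone), hence preserves the facet-adjacency relation on maximal cones, hence preserves the equivalence relation ``transitively adjacent.'' Since $\ChamberFan(B)$ is by definition the set of maximal cones of $\ScatFan(B)$ transitively adjacent to $\C^+$, together with their faces, it then suffices to show that $\eta^B_k(\C^+)$ is transitively adjacent to $\C^+$ inside $\ScatFan(\mu_k(B))$. Here the one concrete computation enters: from \eqref{mutation map def}, $\eta^B_k$ fixes $e_k^\perp$ pointwise and carries the interior of $\C^+$ (where $\br{v,e_k}>0$) into the open halfspace $\br{v,e_k}<0$. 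Since walls are perpendicular to vectors of $N^+$ and the $e_i^\perp$ are walls, $\C^+$ is a maximal cone of $\ScatFan(\mu_k(B))$ as well; and $\eta^B_k(\C^+)$ is a maximal cone of $\ScatFan(\mu_k(B))$ distinct from $\C^+$ but meeting it exactly in the facet $\C^+\cap e_k^\perp$. As $\ScatFan(\mu_k(B))$ is a fan, $\eta^B_k(\C^+)$ and $\C^+$ are adjacent, in particular transitively adjacent, which is what we wanted.

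With that base point in hand, the conclusion is formal: any maximal cone $C$ of $\ChamberFan(B)$ is transitively adjacent to $\C^+$ in $\ScatFan(B)$, so $\eta^B_k(C)$ is transitively adjacent to $\eta^B_k(\C^+)$, hence to $\C^+$, in $\ScatFan(\mu_k(B))$; thus $\eta^B_k$ maps the maximal cones of $\ChamberFan(B)$ into those of $\ChamberFan(\mu_k(B))$, and being a fan isomorphism it maps all of $\ChamberFan(B)$ into $\ChamberFan(\mu_k(B))$. Running the same argument with $\eta^{\mu_k(B)}_k=(\eta^B_k)^{-1}$ (using that $\mu_k$ is an involution) gives the reverse inclusion, so $\eta^B_k$ is an isomorphism $\ChamberFan(B)\to\ChamberFan(\mu_k(B))$; composing and inducting finishes the proof for arbitrary $\kk$. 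I expect no real obstacle here: the content is the short halfspace computation for $\C^+$ in the base case, and the rest is bookkeeping with fan isomorphisms; the only place to be slightly careful is the composition identity for mutation maps that bridges a single index to a sequence.
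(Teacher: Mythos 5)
Your proposal is correct and matches the paper's (implicit) argument: the paper likewise derives this as a direct consequence of Corollary~\ref{mut scat fan}, using that $\C^+$ is a cone of both $\ScatFan(B)$ and $\ScatFan(\mu_k(B))$, that $\eta_k^B$ fixes the facet of $\C^+$ in $e_k^\perp$ so that $\C^+$ and its image (or preimage) are adjacent across that facet, and that a piecewise-linear isomorphism of fans preserves adjacency and hence transitive adjacency to $\C^+$. Your reduction to a single mutation via the composition identity for mutation maps is also the intended bookkeeping.
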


The following theorem is an easy consequence of results of \cite{GHKK}.
It is not stated explicitly in \cite{GHKK}, but is alluded to in a comment in \cite[Construction~4.1]{GHKK}.

\begin{theorem}\label{clus easy}
Suppose $F$ and $G$ are adjacent maximal cones of $\ChamberFan(B)$ and $n_0$ is the primitive normal to $F\cap G$ in $N^+$.
Then $f_p(\Scat(\tB,\s))=1+\zeta^{n_0}$ for every general point $p$ in $F\cap G$.
\end{theorem}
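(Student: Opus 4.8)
The plan is to induct on $\min(d(F),d(G))$, where $d(\,\cdot\,)$ is the distance from a maximal cone of $\ChamberFan(B)$ to $\C^+$ in the dual graph of $\ChamberFan(B)$; by \cite[Theorem~0.8]{GHKK} that graph is the exchange graph of $B$, so it is connected and has $\C^+$ among its vertices. The base case will come from the structure of $\Scat(\tB,\s)$ near $\C^+$, and the inductive step will move the facet $F\cap G$ one mutation closer to $\C^+$ using the maps of Section~\ref{mut sec}.

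\emph{Base case $F=\C^+$.} Here $F\cap G$ is a facet of the simplicial cone $\C^+$, so $F\cap G\subseteq e_k^\perp$ for a unique $k\in I_\uf$, and, since $F\cap G$ spans $e_k^\perp$ and $e_k\in N^+$, the primitive normal to $F\cap G$ in $N^+$ is $n_0=e_k$. A general point $p$ of $F\cap G$ lies in exactly one rampart of $\Scat(\tB,\s)$, the one associated to $e_k$, so $f_p(\Scat(\tB,\s))$ is the product of the functions $f_\d$ over walls $(\d,f_\d)$ with $p\in\d\subseteq e_k^\perp$. I will use the fact, clear from the construction in \cite{GHKK}, that the only such wall (in a diagram with minimal support) is $(e_k^\perp,1+\zeta_k)$: it is the unique incoming wall in $e_k^\perp$ by Theorem~\ref{key scat}, and the construction creates no other wall whose primitive normal is a multiple of $e_k$. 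Hence $f_p(\Scat(\tB,\s))=1+\zeta_k=1+\zeta^{n_0}$.

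\emph{Inductive step $F\neq\C^+$.} Assume without loss of generality that $d(F)\le d(G)$, so $d(F)\ge1$. Choose $k\in I_\uf$ so that the image $F':=\eta^B_k(F)$ under the isomorphism $\eta^B_k\colon\ChamberFan(B)\to\ChamberFan(\mu_k(B))$ of Corollary~\ref{mut clus fan} is strictly closer to $\C^+$; such $k$ exists because $\eta^B_k$ sends the neighbor of $\C^+$ across $e_k^\perp$ onto $\C^+$, so one may take for $k$ the label of the last edge of a geodesic from $F$ to $\C^+$. Then $G':=\eta^B_k(G)$ is adjacent to $F'$, so $\min(d'(F'),d'(G'))<\min(d(F),d(G))$, where $d'$ denotes distance in $\ChamberFan(\mu_k(B))$, and $p':=\eta^B_k(p)$ is a general point of $F'\cap G'$. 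By the inductive hypothesis, $f_{p'}(\Scat(\mu_k(\tB),\s))=1+\tilde\zeta^{n_0'}$, where $n_0'\in N^+$ is the primitive normal to $F'\cap G'$ and $\tilde\zeta$ are the variables of Table~\ref{init data} for $\mu_k(\tB)$. Now apply Theorem~\ref{mut thm} with $\tB$ replaced by $\mu_k(\tB)$: since $\mu_k$ is an involution, $\Scat(\tB,\s)$ is equivalent to $M_k(\Scat(\mu_k(\tB),\s))$, where this $M_k$ acts geometrically as $(\eta^B_k)^{-1}$ and is linear on each cone of $\ChamberFan(\mu_k(B))$ because $e_k^\perp$ is one of its walls. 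The walls of $\Scat(\mu_k(\tB),\s)$ through $p'$ all lie in $(n_0')^\perp$ and on a single side of $e_k^\perp$ (or all inside it), so $M_k$ applies the same rule from \eqref{Mk+}, \eqref{Mk-} to each of them, replacing the monomial $\tilde\zeta^{n_0'}$ in each of their functions by one monomial $\zeta^{n_0}$ with $n_0=Q\,n_0'$ for the integer matrix $Q$ of determinant $\pm1$ appearing there. Then $n_0$ is primitive, is normal to $(\eta^B_k)^{-1}(F'\cap G')=F\cap G$, and lies in $N^+$ because $F\cap G$ is contained in a wall of $\Scat(\tB,\s)$; thus $n_0$ is the primitive normal to $F\cap G$ in $N^+$. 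Performing the substitution $\tilde\zeta^{n_0'}\mapsto\zeta^{n_0}$ in the product $1+\tilde\zeta^{n_0'}$ of the functions on the walls through $p'$ gives $f_p(M_k(\Scat(\mu_k(\tB),\s)))=1+\zeta^{n_0}$, which by Lemma~\ref{lem1.9} equals $f_p(\Scat(\tB,\s))$. This completes the induction.

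I expect the main obstacle to be the careful matching, in the inductive step, of the piecewise-linear algebra of $M_k$ (the matrices in \eqref{Mk+}, \eqref{Mk-}) with the normal-vector geometry: one must check that $M_k$ records in each wall of the rampart through $p'$ exactly one monomial substitution $\tilde\zeta^{n_0'}\mapsto\zeta^{n_0}$, and that $n_0$ is the correct primitive representative in $N^+$ of the normal direction of $(\eta^B_k)^{-1}((n_0')^\perp)$ — the relevant sign, and the special behavior of walls lying inside $e_k^\perp$ (where mutation at $k$ replaces $\zeta_k$ by $\zeta_k^{-1}$), need attention. A secondary point is the input from \cite{GHKK} used in the base case, namely that $(e_k^\perp,1+\zeta_k)$ is the only wall of $\Scat(\tB,\s)$ contained in $e_k^\perp$.
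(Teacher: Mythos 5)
Your proof is correct and follows essentially the same route as the paper: induction on the dual-graph distance of $\{F,G\}$ to $\C^+$, with the base case supplied by the structure of $\Scat(\tB,\s)$ along the facets of $\C^+$ and the inductive step transferred through one mutation via Theorem~\ref{mut thm} and Corollary~\ref{mut scat fan}. The one input you flag as needing justification in the base case---that the total wall function at a general point of $\C^+\cap e_k^\perp$ is exactly $1+\zeta_k$---is precisely what the paper cites from \cite[Theorem~1.28]{GHKK} and \cite[Remark~1.29]{GHKK}.
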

\begin{proof}
Let $\ell(F)$ be the smallest $\ell$ such that there exists a sequence $F_0,F_1,\ldots,F_\ell$ with $F_0=\C^+$ and $F_\ell=F$, such that $F_{i-1}$ and $F_i$ are adjacent for all $i=1,\ldots,\ell$.
We argue by induction on $\min(\ell(F),\ell(G))$.

In the base case, where this minimum is zero, the assertion is that, for each $i\in I_\uf$, that $f_p(\Scat(\tB,\s))=1+\zeta_i$ for every general point $p$ in $\C^+\cap e_i^\perp$.
This follows from \cite[Theorem~1.28]{GHKK}, as explained in \cite[Remark~1.29]{GHKK}.

If $\min(\ell(F),\ell(G))>0$, then without loss of generality, this minimum equals $\ell(F)$.
Consider a sequence of cones $\C^+=F_0,F_1,\ldots,F_{\ell(F)}=F$ with $F_{i-1}$ and $F_i$ adjacent for all $i=1,\ldots,\ell$.
Then $F_0\cap F_1$ is contained in $\e_k^\perp$ for some $k\in I_\uf$, and Corollary~\ref{mut scat fan} implies that $M_k(F_1)$ is the cone $\C^+$.
By induction, the assertion holds for $M_k(F)$ and $M_k(G)$, which are cones in $\ChamberFan(\mu_k(B))$, and then Theorem~\ref{mut thm} implies the assertion for $F$ and~$G$.
\end{proof}

\subsection{Cluster scattering fans refine mutation fans}\label{mut fan sec}
\begin{definition}[\emph{The mutation fan $\F_B$}]\label{mut fan def}
To define the mutation fan, we first define an equivalence relation $\equiv^B$ on $V^*$ by setting $v\equiv^Bv'$ if and only if, for every sequence $\kk$ of indices and every $i\in I_\uf$, the quantities $\br{\eta^B_\kk(v),e_i}$ and $\br{\eta^B_\kk(v'),e_i}$ have the same sign ($-1$, $0$,  or $1$).
The equivalence classes of $\equiv^B$ are called \newword{$B$-classes}.
The closures of $B$-classes are called \newword{$B$-cones}.
These are closed convex cones \cite[Proposition~5.4]{universal}, meaning that they are closed under nonnegative scaling and addition.
The set of $B$-cones and their faces constitutes a complete fan \cite[Theorem~5.13]{universal} called the \newword{mutation fan}.
\end{definition}

A collection $X$ of vectors in $V^*$ is \newword{sign-coherent} if for any $i\in I_\uf$, the set $\br{X,e_i}$ contains no two numbers with strictly opposite signs.
(That is, $\br{x,e_i}\br{y,e_i}\ge0$ for any $x,y\in X$ and $i\in I_\uf$.)
The following is \cite[Proposition~5.30]{universal}.
\begin{prop}\label{contained Bcone}
A set $C\subseteq V^*$ is contained in some $B$-cone if and only if the set $\eta_\kk^B(C)$ is sign-coherent for every sequence $\kk$ of indices in $I_\uf$.
\end{prop}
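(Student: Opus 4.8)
The plan is to prove the two implications of the equivalence separately. The forward implication is essentially formal; the backward implication carries all the weight.

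\emph{Forward implication.} Suppose $C$ is contained in a $B$-cone $\mathcal{C}=\overline{\mathcal{C}'}$, where $\mathcal{C}'$ is a $B$-class. Fix a sequence $\kk$ of indices in $I_\uf$ and an index $i\in I_\uf$. By the definition of $\equiv^B$, the quantity $\sgn\br{\eta^B_\kk(v),e_i}$ takes one value $\varepsilon\in\set{-1,0,1}$ as $v$ ranges over $\mathcal{C}'$, so $\eta^B_\kk(\mathcal{C}')$ is contained in $\set{u\in V^*:\sgn\br{u,e_i}=\varepsilon}$. Passing to closures, $\overline{\eta^B_\kk(\mathcal{C}')}$ is contained in $e_i^\perp$ when $\varepsilon=0$ and in $\set{u:\varepsilon\br{u,e_i}\ge0}$ otherwise; in each case this is a set on which $\br{\,\cdot\,,e_i}$ assumes no two strictly opposite signs. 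Intersecting over $i\in I_\uf$, the set $\overline{\eta^B_\kk(\mathcal{C}')}$ is sign-coherent. Since $\eta^B_\kk$ is continuous, $\eta^B_\kk(\mathcal{C})=\eta^B_\kk(\overline{\mathcal{C}'})\subseteq\overline{\eta^B_\kk(\mathcal{C}')}$, and hence $\eta^B_\kk(C)\subseteq\eta^B_\kk(\mathcal{C})$ is sign-coherent. As $\kk$ was arbitrary, the forward implication follows.

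\emph{Backward implication.} Suppose $\eta^B_\kk(C)$ is sign-coherent for every $\kk$. First I would reduce to the case that $C$ is a closed convex cone, by replacing $C$ with the closure of its conical hull; this preserves the hypothesis (each $\eta^B_\kk$ is continuous and positively homogeneous, and conical hulls and closures of sign-coherent sets are sign-coherent) and the conclusion ($B$-cones are closed convex cones). Since the cones of the mutation fan $\F_B$ other than the $B$-cones themselves are faces of $B$-cones, hence subsets of $B$-cones, it suffices to show that $C$ lies in some cone of $\F_B$. Suppose not, and argue for a contradiction. Using that $\F_B$ is a complete fan, pick $c_0\in\relint(C)$ and let $\mathcal{D}_0$ be the unique cone of $\F_B$ with $c_0\in\relint(\mathcal{D}_0)$; since $C\not\subseteq\mathcal{D}_0$ there is a point $c_1\in C\setminus\mathcal{D}_0$. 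Along the segment $\seg{c_0c_1}\subseteq C$, which lies in $\relint(C)$ except at $c_1$, the enclosing cone of $\F_B$ must change; at a first point of change one obtains a proper face $\mathcal{D}_*$ of $\mathcal{D}_0$, a point $c_*\in\relint(C)\cap\relint(\mathcal{D}_*)$, and a point $c'\in\relint(C)$ that is collinear with $c_0$ and $c_*$, lies just beyond $c_*$, and lies in $\relint(\mathcal{D}')$ for some cone $\mathcal{D}'\ne\mathcal{D}_0$ of $\F_B$. Since $c_0$ and $c'$ lie in relative interiors of distinct cones of $\F_B$, they are not $\equiv^B$-equivalent, so $\sgn\br{\eta^B_\kk(c_0),e_i}\ne\sgn\br{\eta^B_\kk(c'),e_i}$ for some $\kk$ and $i\in I_\uf$. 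If these two signs are strictly opposite, sign-coherence of $\eta^B_\kk(C)$ is already violated. Otherwise one of the two values is $0$, and the plan is to use that $\eta^B_\kk$ is linear on each cone of $\F_B$ containing $\mathcal{D}_*$, together with $c_*\in\relint(C)$, to perturb off the (locally linear) set $\set{v:\br{\eta^B_\kk(v),e_i}=0}$ within $\relint(C)$, producing a point of $C$ on the strictly opposite side and again contradicting sign-coherence. A second, and perhaps cleaner, route to the backward implication uses the recursion $\eta^B_{k\kk'}=\eta^{\mu_k(B)}_{\kk'}\circ\eta^B_k$: it lets one rephrase the conclusion for $B$ as ``$C$ straddles no hyperplane $e_i^\perp$, and $\eta^B_k(C)$ lies in a $\mu_k(B)$-cone for every $k\in I_\uf$,'' and since this recursion is not well founded, making it rigorous means passing to the limit over the finite-length approximations to $\F_B$, as in the construction of the mutation fan in \cite{universal}.

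The step I expect to be the main obstacle is the last part of the backward implication: controlling the local geometry of $\F_B$ at a point of $\relint(C)$ where $C$ would have to leave a cone. Since $\F_B$ need not be polyhedral or even locally finite, and can exhibit the complicated behavior alluded to in Remark~\ref{crazy fan}, I would route this through the finer structural properties of the mutation fan from \cite{universal}---in particular, that $\eta^B_\kk$ restricts to a linear map on each cone of $\F_B$, that the relative interior of each cone of $\F_B$ is a single $\equiv^B$-class, and that $\F_B$ is the common refinement of its finite-length approximations---rather than rederiving them here.
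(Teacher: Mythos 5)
The paper does not prove this statement itself---it is quoted verbatim from \cite[Proposition~5.30]{universal}---so your attempt can only be compared with the proof there. Your forward implication is correct. The backward implication, however, has a genuine gap right at its first step, and it is exactly the point on which the proof in \cite{universal} turns. You reduce to the case that $C$ is a closed convex cone by replacing $C$ with the closure of its nonnegative linear span, asserting that the hypothesis is preserved because $\eta^B_\kk$ is ``continuous and positively homogeneous.'' But the conical hull is generated by \emph{sums}, and $\eta^B_\kk$ is not additive, so positive homogeneity does not give $\eta^B_\kk$ of the conical hull inside the conical hull of $\eta^B_\kk(C)$. What saves the reduction is a lemma you never state: by \eqref{mutation map def}, $\eta^B_k$ agrees with a linear map on each of the two half-spaces $\pm\br{v,e_k}\ge 0$, and a sign-coherent set lies entirely in one of them; hence, by induction along $\kk$ (using the sign-coherence hypothesis at every intermediate stage), $\eta^B_\kk$ restricts to a \emph{single linear map} on the nonnegative span of $C$. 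This additivity-on-sign-coherent-sets lemma is the engine of \cite[Section~5]{universal} (it is also what makes $B$-classes convex in Proposition~5.4 there), and without it the reduction is unjustified.

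Once that linearity is in hand, your segment-walking contradiction argument---whose difficulties with the non-local-finiteness of $\F_B$ you correctly flag---is both broken and unnecessary. Broken: in the case $\br{\eta^B_\kk(c_0),e_i}=0$, $\br{\eta^B_\kk(c'),e_i}>0$, no perturbation inside $\relint(C)$ is forced to produce a strictly negative value; the function $v\mapsto\br{\eta^B_\kk(v),e_i}$ can be nonnegative on all of $C$ while vanishing on part of it, and in that situation $C$ may still lie in a $B$-cone, so you have not reached a contradiction. (Also, the step ``$c_0$ and $c'$ lie in relative interiors of distinct cones of $\F_B$, hence $c_0\not\equiv^B c'$'' needs justification for cones that are proper faces rather than $B$-cones.) Unnecessary: with $\eta^B_\kk$ linear on the convex cone $C$, one shows directly that $\relint(C)$ is a single $\equiv^B$-class. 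Indeed, if $p,p'\in\relint(C)$ had $\br{\eta^B_\kk(p),e_i}=0$ but $\br{\eta^B_\kk(p'),e_i}>0$, put $p''=(1+\ep)p-\ep p'$, which lies in $C$ for small $\ep>0$ since $p\in\relint(C)$; then $(1+\ep)p=p''+\ep p'$ and linearity gives $\br{\eta^B_\kk(p''),e_i}=-\ep\br{\eta^B_\kk(p'),e_i}<0$, contradicting sign-coherence of $\eta^B_\kk(C)$. Hence $\relint(C)$ lies in one $B$-class and $C\subseteq\overline{\relint(C)}$ lies in the corresponding $B$-cone. I recommend restructuring the backward direction around the additivity lemma and this relative-interior argument rather than around the local geometry of $\F_B$.
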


A complete fan $\F$ \newword{refines} a complete fan $\F'$ if every cone of $\F$ is contained in a cone of $\F'$, or equivalently if every cone of $\F'$ is a union of cones of $\F$.

\begin{theorem}\label{direct construct FB}
Consider a family of fans in $V^*$, with one fan $\E_{B'}$ for each $B'$ obtained as $\mu_\kk(B)$ for some sequence $\kk$ of indices.
Suppose that each cone of each $\E_{B'}$ is a sign-coherent set.
Suppose also that, for each $B'$, each sequence $\kk$, and each cone $C$ of $\E_{B'}$, the set $\eta_\kk^{B'}(C)$ is contained in a cone of $\E_{\mu_\kk(B')}$.
Then $\E_B$ refines the mutation fan~$\F_B$.
\end{theorem}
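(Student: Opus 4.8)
The plan is to deduce the theorem directly from Proposition~\ref{contained Bcone}, which characterizes exactly those subsets of $V^*$ that lie in a single $B$-cone. Let $C$ be an arbitrary cone of $\E_B$. It suffices to show that $C$ is contained in some $B$-cone: then every cone of $\E_B$ is contained in a cone of $\F_B$, which is the assertion that $\E_B$ refines $\F_B$. (The equivalent reformulation of refinement, that each cone of $\F_B$ is a union of cones of $\E_B$, follows from completeness of the two fans; $\F_B$ is complete by Definition~\ref{mut fan def}, and $\E_B$ is complete in the intended application.)

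To show $C$ lies in a $B$-cone, fix an arbitrary sequence $\kk$ of indices in $I_\uf$. By the second hypothesis of the theorem, applied with $B'=B$, the set $\eta_\kk^B(C)$ is contained in some cone $D$ of $\E_{\mu_\kk(B)}$, where $D$ may depend on $\kk$. By the first hypothesis, $D$ is a sign-coherent set; and any subset of a sign-coherent set is again sign-coherent, since the defining inequalities $\br{x,e_i}\br{y,e_i}\ge 0$ are simply inherited. Hence $\eta_\kk^B(C)$ is sign-coherent. As $\kk$ was arbitrary, $\eta_\kk^B(C)$ is sign-coherent for every sequence $\kk$ of indices in $I_\uf$, so Proposition~\ref{contained Bcone} gives that $C$ is contained in a $B$-cone. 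Running this over all cones $C$ of $\E_B$ completes the argument.

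In effect, the substantive work has already been done in Proposition~\ref{contained Bcone} (= \cite[Proposition~5.30]{universal}): once membership in a $B$-cone is known to be equivalent to sign-coherence of every mutation-map image, the theorem is just a chaining together of the two hypotheses, and there is no real obstacle. The one point worth keeping straight is the quantifier bookkeeping---the second hypothesis supplies, for our fixed $C$ and \emph{each individual} sequence $\kk$, a (generally $\kk$-dependent) cone $D$ of $\E_{\mu_\kk(B)}$ absorbing $\eta_\kk^B(C)$---but this is precisely the ``for every sequence $\kk$'' quantifier appearing in Proposition~\ref{contained Bcone}, so nothing more is needed.
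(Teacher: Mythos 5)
Your argument is correct and is essentially identical to the paper's own proof: fix a cone $C$ of $\E_B$, use the second hypothesis with $B'=B$ to place $\eta_\kk^B(C)$ inside a sign-coherent cone of $\E_{\mu_\kk(B)}$ for every $\kk$, and invoke Proposition~\ref{contained Bcone}. The extra remarks on quantifier bookkeeping and on subsets of sign-coherent sets being sign-coherent are fine but not needed beyond what the paper already records.
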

\begin{proof}
Let $C$ be a cone of $\E_B$.
By hypothesis, for any sequence $\kk$, the set $\eta_\kk^B(C)$ is contained in some cone of $\E_{\mu_\kk(B)}$ and thus $\eta_\kk^B(C)$ is sign-coherent.
Now Proposition~\ref{contained Bcone} says that~$C$ is contained in some $B$-cone.
\end{proof}

The following theorem is the main result of this section.

\begin{theorem}\label{scat ref mut}
The scattering fan $\ScatFan(B)$ refines the mutation fan $\F_B$ for any exchange matrix~$B$.
\end{theorem}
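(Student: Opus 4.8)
The plan is to deduce the theorem from the universal property of the mutation fan, Theorem~\ref{direct construct FB}, applied to the family of fans $\E_{B'}=\ScatFan(B')$, one for each $B'=\mu_\kk(B)$. (The family of such $B'$ is closed under further mutation, since $\mu_\kk(\mu_{\kk'}(B))$ is again a mutation of $B$, so the hypotheses of Theorem~\ref{direct construct FB} are all internal to this family.) Each $\ScatFan(B')$ is the scattering fan of the consistent, minimal-support diagram $\Scat(\tB',\s)$, hence a complete fan in $V^*$ by Theorem~\ref{scat fan}. So it remains only to verify the two hypotheses of Theorem~\ref{direct construct FB}.

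The hypothesis about mutation maps is immediate from Corollary~\ref{mut scat fan}: for any $B'$ and any sequence $\kk$ of indices in $I_\uf$, the map $\eta_\kk^{B'}$ is a piecewise-linear isomorphism of fans from $\ScatFan(B')$ onto $\ScatFan(\mu_\kk(B'))$, so it carries each cone of the former \emph{onto} (in particular, into) a cone of the latter.

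The substantive point is the hypothesis that every cone of every $\ScatFan(B')$ is sign-coherent. I would fix $B'$, write $\D=\Scat(\tB',\s)$ (taken with minimal support) and fix $i\in I_\uf$; since a subset of a sign-coherent set is sign-coherent and every cone of $\ScatFan(B')$ is a face of a $\D$-cone, it suffices to show that $\br{C,e_i}$ contains no two numbers of strictly opposite sign for each $\D$-cone $C$. Here the key fact is that $e_i^\perp$ is a wall of $\D$ (as recalled just before Corollary~\ref{mut scat fan}), so that the rampart of $\D$ associated to $e_i\in N^+$ is the entire hyperplane $e_i^\perp$. Let $C'$ be the $\D$-class with closure $C$ and pick $p\in C'$. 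If $e_i^\perp\in\Ram_\D(p)$, then $C'\subseteq\cap\Ram_\D(p)\subseteq e_i^\perp$, so $\br{C,e_i}=\set{0}$. Otherwise every wall of $\D$ contained in $e_i^\perp$ lies in $\D\setminus\Ram_\D(p)$, so $e_i^\perp\subseteq\Supp(\D\setminus\Ram_\D(p))$; as $C'$ is path-connected and disjoint from $\Supp(\D\setminus\Ram_\D(p))$, it misses the hyperplane $e_i^\perp$ entirely and therefore lies in a single open halfspace bounded by it. Either way $\br{C',e_i}$ has one fixed strict sign, and passing to the closure $C$ can only add the value $0$; thus $C$ is sign-coherent. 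With both hypotheses checked, Theorem~\ref{direct construct FB} yields that $\ScatFan(B)=\E_B$ refines $\F_B$.

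I expect the sign-coherence step to be the only real obstacle, and within it the crucial inputs are that the coordinate hyperplanes $e_i^\perp$ persist as full walls (equivalently, full ramparts) of the minimal-support cluster scattering diagram, and that $\D$-classes are path-connected, so that such a class cannot straddle $e_i^\perp$. Given these, the rest is a formal consequence of Corollary~\ref{mut scat fan}, Theorem~\ref{scat fan}, and Theorem~\ref{direct construct FB}.
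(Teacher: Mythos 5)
Your proposal is correct and follows essentially the same route as the paper: apply the universal property of the mutation fan (Theorem~\ref{direct construct FB}) to the family $\E_{B'}=\ScatFan(B')$, using Corollary~\ref{mut scat fan} for the mutation hypothesis and the presence of the walls $e_i^\perp$ for sign-coherence. The only difference is that you spell out the sign-coherence step (via ramparts and path-connectedness of $\D$-classes), which the paper asserts in one line; that elaboration is accurate and consistent with the paper's argument.
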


\begin{proof}
Since each hyperplane $e_i^\perp$ for $i\in I_\uf$ is a wall of the cluster scattering diagram, each cone of $\ScatFan(\mu_\kk(B))$ is a sign-coherent set.
Corollary~\ref{mut scat fan} says that for each $B'$, each sequence $\kk$, and each cone $C$ of $\ScatFan(B')$, the set $\eta_\kk^{B'}(C)$ \emph{is} a cone of $\ScatFan(\mu_\kk(B'))$.
By Proposition~\ref{direct construct FB}, $\ScatFan(B)$ refines $\F_B$.
\end{proof}

We advance the following conjecture on when the the cluster scattering fan and the mutation fan coincide.

\begin{conj}\label{scat eq mut}
Given an exchange matrix $B$, the scattering fan $\ScatFan(B)$ coincides with the mutation fan $\F_B$ if and only if either 
\begin{enumerate}[(i)]
\item
$B$ is $2\times2$ and of finite or affine type or 
\item
$B$ is $n\times n$ for $n>2$ and of finite \emph{mutation} type.
\end{enumerate}
\end{conj}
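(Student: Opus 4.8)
The plan is to use that $\ScatFan(B)$ always refines $\F_B$ (Theorem~\ref{scat ref mut}), so the conjecture becomes a question of when this refinement is proper, and to treat the two implications---and several sub-cases of each---separately.

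For the ``if'' direction I would split into three cases. \emph{Finite type} (any rank): here $\Scat(\tB,\s)$ has finitely many walls and $\ChamberFan(B)$ is complete, so $\ScatFan(B)=\ChamberFan(B)$, which is the $\g$-vector fan; combined with the known identification of the mutation fan of a finite-type matrix with its $\g$-vector (Cambrian) fan, this gives $\ScatFan(B)=\F_B$. \emph{$2\times2$ of affine type} ($|b_{12}b_{21}|=4$): both fans are known explicitly---$\ScatFan(B)$ from \cite[Example~1.15]{GHKK} and \cite{scatcomb}, and $\F_B$ from the explicit rank-$2$ mutation-fan computation---and one checks that each is the two families of cluster chambers together with the single limiting ray. \emph{Finite mutation type of rank $>2$} (surface types, affine and elliptic types, and the sporadic types): this is the substantial case. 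The goal is to show that no full-dimensional $B$-cone contains a scattering wall in its interior, i.e.\ that every wall of a minimal-support $\Scat(\tB,\s)$ lies inside $\bigcup_{\kk,i}(\eta^B_\kk)^{-1}(e_i^\perp)$. Finiteness of the mutation class reduces, via Corollary~\ref{mut scat fan}, the infinitely many ``shapes'' of walls to finitely many $\eta$-orbits; I would combine this with type-specific input: for surface types, the dictionary between cluster scattering walls and tagged arcs and laminations on the surface; for the affine, elliptic, and sporadic types, folding from simply-laced types and periodicity of the associated scattering diagrams.

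For the ``only if'' direction I would prove the contrapositive. \emph{$2\times2$ of wild type} ($|b_{12}b_{21}|\ge5$): the complement of the cluster chambers is a full-dimensional cone $C$, and Remark~\ref{crazy fan} records that $\ScatFan(B)$ subdivides $C$ into its rays; meanwhile the explicit rank-$2$ mutation-fan computation shows $\bigcup_{\kk,i}(\eta^B_\kk)^{-1}(e_i^\perp)$ is the union of the cluster-chamber walls, whose closure misses the interior of $C$, so $C$ is a single $B$-cone and $\ScatFan(B)\ne\F_B$. \emph{$n\times n$ with $n>2$ of infinite mutation type}: first reduce to the connected case, since for block-diagonal $B$ both fans split as products over connected blocks and an infinite-mutation-type block must be connected of rank $\ge3$. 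For such $B$, a pigeonhole argument produces matrices with arbitrarily large entries in the mutation class, hence a mutation $B'=\mu_\kk(B)$ having a $2\times2$ principal submatrix of wild type; since the mutation maps are isomorphisms of $\ScatFan$ and of $\F$, it is enough to show $\ScatFan(B')\ne\F_{B'}$, for which one would extract from the wild submatrix a mutation cone whose interior meets a scattering wall.

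The hard part will be the ``if'' direction for finite-mutation-type $B$ of rank $>2$: excluding any subdivision of a mutation cone forces one to control the (infinite) wall structure of $\Scat(\tB,\s)$, and I do not expect a single uniform argument---surface combinatorics, folding, and the elliptic structure will each be needed---so I would begin with the surface case (e.g.\ the once-punctured torus, the $3\times3$ Markov quiver, where both fans are relatively well understood) and then tackle the sporadic types. The ``only if'' direction for infinite mutation type is also genuinely open: the wild rank-$2$ picture makes the statement very plausible, but turning a large matrix entry into an explicit proper subdivision of a mutation cone in higher rank---where the two offending indices need not form a block, so one cannot simply reduce to a rank-$2$ direct summand---is the principal difficulty there.
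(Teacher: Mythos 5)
The statement you are trying to prove is Conjecture~\ref{scat eq mut}; the paper offers no proof of it, only supporting evidence. What the paper actually establishes is exactly the refinement $\ScatFan(B)\subseteq\F_B$ (Theorem~\ref{scat ref mut}) plus verification of the conjecture in two cases: $B$ of size $2\times2$ (by comparing the explicit computations in \cite[Section~3]{CGMMRSW} with \cite[Section~9]{universal}) and $B$ of finite type (both fans equal the $\g$-vector fan of $B^T$). Your proposal reproduces this known part essentially as the paper does, and your overall framing---reduce to deciding when the refinement is proper---is the right one. But everything beyond those two cases in your write-up is a plan, not an argument, and you say so yourself. A proof proposal that ends with ``this case is genuinely open'' for both the finite-mutation-type rank~$>2$ direction and the infinite-mutation-type direction has not closed the gap; it has restated the conjecture.

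Two of the gaps deserve to be named precisely, because they are where a naive version of your plan would fail. First, in the ``only if'' direction, finding a mutation $B'=\mu_\kk(B)$ with a wild $2\times2$ principal submatrix does not by itself produce a scattering wall in the interior of a $B'$-cone: the cluster scattering diagram of $B'$ does not restrict to the rank-$2$ cluster scattering diagram of that submatrix unless the two indices form a direct summand, and likewise the mutation fan $\F_{B'}$ does not factor through the submatrix, so the wild rank-$2$ picture (the full-dimensional limiting cone $C$ subdivided into uncountably many rays) cannot simply be imported. Second, in the ``if'' direction for surface/folded/sporadic types of rank $>2$, showing that no $B$-cone is subdivided requires simultaneous control of the support of $\Scat(\tB,\s)$ outside the cluster chambers (where the walls are dense and not individually accessible by Corollary~\ref{mut scat fan}, which only transports whole fans) and of the $B$-cones there; neither is computed in the paper or in the references you invoke. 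Until those two steps are supplied, the statement remains, as in the paper, a conjecture.
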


An exchange matrix $B$ is of \newword{finite mutation type} if only finitely many distinct matrices can be obtained from $B$ by sequences of mutations.
A comparison of results of \cite[Section~3]{CGMMRSW} and \cite[Section~9]{universal} confirms the conjecture when $B$ is $2\times 2$.
(See also \cite[Section~3]{scatcomb}.)  
When $B$ is of finite type, both $\ScatFan(B)$ and $\F_B$ are known to coincide with the $\g$-vector fan for $B^T$, so they coincide.
(See \cite[Lemma~2.10]{GHKK}, \cite[Corollary~5.9]{GHKK}, and \cite[Section~10]{universal}.
The transpose is not mentioned in \cite{GHKK}, but arises from a difference in conventions as explained in \cite[Section~1]{scatcomb}.)
The conjecture also seems plausible in light of a characterization of mutation-finiteness given in \cite[Theorem~2.8]{FeShT}:
For $n>2$, an $n\times n$ exchange matrix is mutation finite if and only if for all $i\neq j$, the matrix $\begin{bsmallmatrix}b_{ii}&b_{ij}\\b_{ji}&b_{jj}\end{bsmallmatrix}=\begin{bsmallmatrix}0&b_{ij}\\b_{ji}&0\end{bsmallmatrix}$ is of finite or affine type (i.e.\ $b_{ij}b_{ji}\ge-4$).

\section{Cluster monomials}\label{broken sec}
We now discuss two ways in which cluster monomials can be described in terms of the piecewise-linear geometry of the cluster scattering diagram: in terms of broken lines and in terms of path-ordered products.
Readers unfamiliar with cluster algebras may wish to take Theorem~\ref{clus mon thm} as a definition of cluster monomials.

Suppose $\D$ is a scattering diagram. 
Given $m_0\in M^\circ\setminus\set{0}$ and ${Q\in V^*\setminus\Supp(\D)}$, a \newword{broken line} for $m_0$ with endpoint $Q$ is a piecewise linear path $\gamma:(-\infty,0]\to V^*$ with a finite number of domains of linearity, with each domain $L$ of linearity marked with a monomial $c_Lz^{m_L}$ for $c_L\in\k$ and $m_L\in M^\circ$, satisfying the following properties:
\begin{enumerate}[(i)]
\item $\gamma(0)=Q$.
\item \label{brok generic}
$\gamma$ does not intersect the relative boundary of any wall and does not intersect any two non-parallel walls at the same point. 
\item For each domain $L$ of linearity, if $m_L=\sum_{i\in I}a_if_i$, then $\gamma'$ is constantly equal to $-\sum_{i\in I_\uf}a_if_i$ in $L$.
(Notice the difference in indexing sets for the sums.)
\item If $L$ is the unbounded domain of linearity of $\gamma$, then $c_L=1$ and $m_L=m_0$.
\item Suppose $t\in(-\infty,0)$ is a point where $\gamma$ is not linear and $L$ is the domain of linearity consisting of values just smaller that $t$ while $L'$ is the domain of linearity consisting of values just larger than $t$.
By \eqref{brok generic}, there exists $n$ primitive in $N^\circ\cap N_\uf$ such that all walls containing $\gamma(t)$ are in $n^\perp$ and such that $\br{m_L,n}>0$.
Let $f$ be the product of the functions $f_\d$ for all walls $(\d,f_\d)$ with $\gamma(t)\in\d$.
Then $c_{L'}z^{m_{L'}}$ equals $c_Lz^{m_L}$ times a term in the formal power series $f^{\br{m_L,n}}$.
\end{enumerate}
Write $c_\gamma z^{m_\gamma}$ for the monomial on the domain of linearity containing $0$ and define the \newword{theta function} $\thet_{Q,m_0}$ to be the sum of the $c_\gamma z^{m_\gamma}$, over all broken lines $\gamma$ for $m_0$ with endpoint $Q$.
This may be an infinite sum, but makes sense as an element of $z^{m_0}\k[[\zeta]]$, as verified in \cite[Proposition~3.4]{GHKK}.

\begin{remark}\label{useless dimensions broken lines}
The definition of broken lines here differs from that in \cite[Definition~3.1]{GHKK} because, as explained in Remark~\ref{useless dimensions}, we define our scattering diagrams in a lower-dimensional vector space.
It is easy to verify that there is a bijection between the broken lines defined here and broken lines in the larger space in the sense of \cite[Definition~3.1]{GHKK} and that both definitions define the same functions~$\thet_{Q,m_0}$.
\end{remark}

Say a point $Q\in V^*$ is \newword{generic} if it is not contained in any hyperplane $n^\perp$ for $n\in N_\uf$.
Write $M^\circ_\uf$ for the saturated sublattice of $M^\circ$ given by the $\integers$-linear span of $\set{f_i:i\in I_\uf}$.
For $m_0\in M_\uf^\circ$, we will write $m_0\in\ChamberFan(B)$ to mean that $m_0$ is contained in some cone in $\ChamberFan(B)$.

The \newword{frozen variables} are the indeterminates $\set{z_i:i\in I_\fr}$.
Given a Laurent polynomial $\thet$ in the $z_i$ and given $m\in M^\circ$, say that $z^m$ \newword{clears frozen denominators} of $\thet$ if all terms of $z^m\thet$ have nonnegative exponents on all frozen variables.
There is a unique minimum (componentwise in the basis of the $f_i$) $m$ such that $z^m$ clears frozen denominators of $\thet$, and $m$ is of the form $\sum_{i\in I_\fr}c_if_i$ with the $c_i$ nonnegative.
We write $\Clear_\fr(\thet)$ for $z^m\thet$ when $m$ is this unique minimum.

The following is an adaptation of \cite[Theorem~4.9]{GHKK}.

\begin{theorem}\label{clus mon thm}
Let $Q$ be a generic point in $\C^+$.
Then \[\bigl\lbrace\Clear_\fr(\thet_{Q,m_0}):m_0\in M_\uf^\circ\cap\ChamberFan(B)\bigr\rbrace\] is the set of cluster monomials in the cluster algebra defined by $\tB$.
\end{theorem}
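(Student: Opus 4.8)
The strategy is to reduce the statement to \cite[Theorem~4.9]{GHKK}, which gives the analogous description of cluster monomials via theta functions in the conventions of \cite{GHKK}. Two discrepancies must be bridged: \cite{GHKK} constructs the scattering diagram in the larger space $M\otimes\reals$ rather than in $V^*$, and \cite{GHKK} normalizes coefficients differently, so that the raw theta functions must be renormalized before they become cluster monomials in the ring attached to $\tB$. The plan is to handle the first discrepancy with Remarks~\ref{useless dimensions} and~\ref{useless dimensions broken lines}, and the second with the operator $\Clear_\fr$, invoking Proposition~\ref{coeffs don't matter} to reduce the coefficient bookkeeping to a single convenient choice.

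In order, the steps I would carry out are as follows. First, Remark~\ref{useless dimensions broken lines} identifies the broken lines for $m_0$ with endpoint $Q$ in our sense with those of \cite[Definition~3.1]{GHKK} and shows the two notions produce the same function $\thet_{Q,m_0}$; thus, for $m_0\in M^\circ_\uf$ and $Q$ generic in $\C^+$, our $\thet_{Q,m_0}$ agrees with the theta function of \cite{GHKK} attached to the same data (the point $m_0$ lying in the subspace $V^*\subseteq M\otimes\reals$). Second, by the discussion following Corollary~\ref{mut scat fan}, the fan $\ChamberFan(B)$ is the fan of Fock--Goncharov cluster chambers and is isomorphic to the cluster complex of $B$; hence the lattice points $m_0\in M^\circ_\uf\cap\ChamberFan(B)$ are exactly the $\g$-vectors (relative to the initial seed, up to the global transpose of \cite[Section~1]{scatcomb}) of the cluster monomials of the cluster algebra defined by $\tB$, with each cluster monomial occurring for exactly one such $m_0$. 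Third, with these identifications, \cite[Theorem~4.9]{GHKK} together with the broken-line computation of $F$-polynomials in \cite[Section~3]{GHKK} says precisely that $\thet_{Q,m_0}$ equals, up to a monomial in the frozen variables $\set{z_i:i\in I_\fr}$, the cluster monomial with $\g$-vector $m_0$; and the required correction is the unique minimal monomial rendering the result polynomial in the frozen variables, which is exactly $\Clear_\fr$. This yields both inclusions: each $\Clear_\fr(\thet_{Q,m_0})$ is a cluster monomial, and every cluster monomial, being determined by its $\g$-vector in a unique cone of $\ChamberFan(B)$, arises in this way.

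The main obstacle I anticipate is bookkeeping rather than new mathematics: one must pin down the precise dictionary between the coefficient conventions of \cite{GHKK} (principal coefficients, and the separation-of-additions passage to general geometric coefficients) and the conventions here, where the superlattice $N\supseteq N_\uf$ and the extension of $\set{\,\cdot\,,\,\cdot\,}$ encode the coefficients (Remark~\ref{coeffs don't matter remark}), and then check that under this dictionary the renormalization is genuinely $\Clear_\fr$. Proposition~\ref{coeffs don't matter} should let one verify this after fixing a single coefficient choice and transporting the conclusion. Some care is also required with the global transpose relating our $\g$-vectors to those of \cite{GHKK}, as noted in \cite[Section~1]{scatcomb}, but this only affects which exchange matrix is used to index the cluster algebra and not the content of the statement.
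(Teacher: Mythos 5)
Your proposal is correct and follows essentially the same route as the paper: the paper states Theorem~\ref{clus mon thm} as an adaptation of \cite[Theorem~4.9]{GHKK} with no separate formal proof, relying on Remark~\ref{useless dimensions broken lines} for the passage from $M\otimes\reals$ to $V^*$ and on Remark~\ref{what are clus mons} (citing \cite[Proposition~5.2]{ca4} and \cite[Corollary~6.3]{ca4}) to justify that $\Clear_\fr$ is exactly the correction needed to pass from the Laurent-in-frozen-variables convention of \cite[Definition~4.8]{GHKK} to ordinary cluster monomials. Your write-up fills in the same bookkeeping the paper delegates to those remarks, so there is nothing further to flag.
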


\begin{remark}\label{what are clus mons}
The differences between our Theorem~\ref{clus mon thm} and \cite[Theorem~4.9]{GHKK} are superficial, but they convey important differences in point of view.
The definition of cluster monomials in \cite[Definition~4.8]{GHKK} allows the frozen variables, as is not uncommon.
However, instead of taking ordinary monomials in the cluster variables, frozen and unfrozen, in some cluster, \cite[Definition~4.8]{GHKK} allows cluster variables to be ordinary monomials in the unfrozen variables in the cluster but \emph{Laurent} in the frozen variables.
We are taking a more restrictive definition of cluster monomials: ordinary monomials \emph{in the unfrozen variables only} in some cluster.
Merely taking theta functions for $m_0\in M_\uf$ does not work because negative powers of the frozen variables can occur in $\thet_{Q,m_0}$.
Applying the operator $\Clear_\fr$ is the correct fix, in light of \cite[Proposition~5.2]{ca4}.
Indeed, without this correction, $\thet_{m_0}$ is ``missing'' the denominator of the formula given in \cite[Corollary~6.3]{ca4}.

In any case, it is easy to insert or delete powers of the frozen variables in theta functions.
Given a vector $m\in M^\circ$, write $m=\sum_{i\in I}a_if_i$ and define $m_\uf=\sum_{i\in I_\uf}a_if_i$ and $m_\fr=m-m_\uf$.
For all $m\in M^\circ$, we have $\thet_{Q,m}=z^{m_\fr}\thet_{Q,m_\uf}$.
\end{remark}

\begin{remark}\label{froz clus mon}
In \cite{GHKK}, the algebra $\can(\tB,\s)$ of $\k$-linear combinations of functions $\thet_{Q,m_0}$ (for fixed generic $Q\in\C^+$) is defined.
Under certain circumstances, explored at length in \cite{GHKK}, the algebra $\can(\tB,\s)$ coincides with the upper cluster algebra \cite{ca3}, or even with the cluster algebra \cite{ca4}.
Writing $\k[z^\pm_\fr]$ for the Laurent polynomial ring $\k[z_i^\pm:i\in I_\fr]$, the algebra $\can(\tB,\s)$ is the set of $\k[z^\pm_\fr]$-linear combinations of theta functions $\thet_{Q,m_0}$ with $m_0\in M^\circ_\uf$.
There are advantages and disadvantages in different settings to inverting the frozen variables $\set{z_i:i\in I_\fr}$.
In settings where we wish not to invert the frozen variables, we can instead write $\k[z_\fr]$ for the polynomial ring $\k[z_i:i\in I_\fr]$ and consider the algebra of $\k[z_\fr]$-linear combinations of functions $\Clear_\fr(\thet_{Q,m_0})$ such that $m_0\in M^\circ_\uf$.
\end{remark}

\begin{remark}\label{suppress}
Theorem~\ref{change of basepoint}, below, makes it clear that $\thet_{Q,m_0}$ is independent of the choice of generic $Q\in\C^+$.
Thus, eventually it will make sense to suppress $Q$ from the notation, writing $\thet_{m_0}$ for $\thet_{Q,m_0}$.
\end{remark}

One can also describe cluster monomials in terms of path-ordered products.

\begin{theorem}\label{clus mon pop}
Fix $Q$ generic in $\C^+$.
If $m_0\in M_\uf^\circ\cap\ChamberFan(B)$, then $\thet_{Q,m_0}$ is $\p_{Q',Q,\Scat(\tB,\s)}(z^{m_0})$, where $Q'$ is any point in the interior of a maximal cone $F$ of $\ChamberFan(B)$ with $m_0\in F$.
\end{theorem}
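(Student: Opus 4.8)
The plan is to connect the theta function $\thet_{Q,m_0}$ to a path-ordered product by tracking broken lines along a generic path from $Q'$ to $Q$. The key structural fact is that when $m_0$ lies in the interior of a maximal cone $F$ of $\ChamberFan(B)$ and $Q'$ is a generic point in $\relint(F)$, there is exactly one broken line for $m_0$ with endpoint $Q'$: namely the straight ray from $Q'$ in the direction $-\sum_{i\in I_\uf}a_if_i$ (where $m_0 = \sum_i a_i f_i$), carrying the monomial $z^{m_0}$ and never bending, because no wall separates points of $\relint(F)$ from the ``far end'' of that ray inside $F$. Hence $\thet_{Q',m_0} = z^{m_0}$. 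This is essentially the content behind \cite[Theorem~1.28]{GHKK} / the base-point-in-its-own-chamber statement, and I would cite the relevant result of \cite[Section~3]{GHKK} for it rather than reprove it.

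Next I would invoke the transformation of theta functions under change of base point. The general principle from \cite[Section~3]{GHKK} (their Theorem~3.5, the ``change of chamber'' formula) is that for generic $Q', Q \notin \Supp(\Scat(\tB,\s))$ one has $\thet_{Q,m_0} = \p_{Q',Q,\Scat(\tB,\s)}(\thet_{Q',m_0})$, where the path-ordered product acts on $z^{m_0}\k[[\zeta]]$ via the reinterpretation of wall-crossing automorphisms described around Proposition~\ref{just z's} (crossing $(\d,f_\d(\zeta^{n_0}))$ sends $z^m \mapsto z^m f_\d^{\br{m,\pm n_0'}}$). This requires that $\Scat(\tB,\s)$ be consistent, which is Theorem~\ref{key scat}, so that $\p_{Q',Q,\Scat(\tB,\s)}$ is well-defined independent of the generic path chosen; one should work with a minimal-support representative so the path-ordered product is unambiguous. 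Combining the two steps: $\thet_{Q,m_0} = \p_{Q',Q,\Scat(\tB,\s)}(z^{m_0})$, which is exactly the claim.

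The one subtlety to address carefully is that the theorem states $\thet_{Q,m_0}$ is independent of the generic $Q \in \C^+$ and that $Q'$ may be taken to be \emph{any} interior point of \emph{any} maximal cone $F$ of $\ChamberFan(B)$ containing $m_0$ (there may be several such cones when $m_0$ lies on shared faces — though if $m_0 \in M_\uf^\circ \cap \ChamberFan(B)$ is generic enough it lies in a unique maximal cone; in general $m_0$ could be on a wall). Here I would argue that the value $\p_{Q',Q,\Scat(\tB,\s)}(z^{m_0})$ does not depend on which such $F$ and which interior point $Q'$ we pick: by consistency, $\p_{Q',Q} = \p_{Q'',Q} \circ \p_{Q',Q''}$, so it suffices to check that $\p_{Q',Q''}(z^{m_0}) = z^{m_0}$ when $Q', Q''$ are interior points of (possibly different) maximal cones of $\ChamberFan(B)$ both containing $m_0$ in their closure. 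Crossing from one such chamber to an adjacent one passes through a wall in $(F\cap G)$'s hyperplane $n_0^\perp$, and by Theorem~\ref{clus easy} the relevant function is $1+\zeta^{n_0}$ with $m_0 \in n_0^\perp$ (since $m_0 \in F \cap G$), so the wall-crossing fixes $z^{m_0}$; chaining through a path of chambers all containing $m_0$ gives the claim. This reduction — that one can choose the path between the two base points to stay within chambers whose closures contain $m_0$ — is the main obstacle, and it rests on the fact that the set of cones of $\ChamberFan(B)$ containing a fixed $m_0 \in \ChamberFan(B)$ forms a connected subcomplex (the star of the minimal face containing $m_0$), combined with Theorem~\ref{clus easy} to control the wall functions crossed. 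Once that is in place, the theorem follows immediately from the change-of-base-point formula and $\thet_{Q',m_0} = z^{m_0}$.
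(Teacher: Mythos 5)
Your proposal is correct and follows essentially the same route as the paper: combine the change-of-basepoint formula \cite[Theorem~3.5]{GHKK} (Theorem~\ref{change of basepoint}) with the fact that $\thet_{Q',m_0}=z^{m_0}$ when $Q'$ lies in the interior of a chamber $F$ with $m_0\in F$. The only difference is that the final ``subtlety'' you address at length is already absorbed by \cite[Corollary~3.9]{GHKK} (Theorem~\ref{easy theta}), which requires only $m_0\in F$ rather than $m_0\in\relint(F)$, so each admissible choice of $F$ and $Q'$ independently yields $\thet_{Q,m_0}$ and no separate well-definedness argument via Theorem~\ref{clus easy} is needed.
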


Theorem~\ref{clus mon pop} relies on two results from \cite[Section~3]{GHKK} that we now quote.
These are \cite[Theorem~3.5]{GHKK} and \cite[Corollary~3.9]{GHKK}.

\begin{theorem}\label{change of basepoint}
Suppose $\D$ is a consistent scattering diagram, $m_0\in M^\circ$, and $Q$ and $Q'$ are generic points in $V^*$.
Then $\thet_{Q,m_0}=\p_{Q',Q,\D}(\thet_{Q',m_0})$.
\end{theorem}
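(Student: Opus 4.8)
\textbf{Proof proposal for Theorem~\ref{change of basepoint}.}
The plan is to reduce the statement to a pointwise (wall-by-wall) check and then to exploit the linearity of path-ordered products in the $z^m$ variables, exactly as in the fourth observation (Proposition~\ref{just z's}). First I would reduce to the case where $Q$ and $Q'$ are separated by a single wall-crossing: since $\D$ is consistent, $\p_{Q',Q,\D}$ depends only on the endpoints and satisfies $\p_{Q'',Q,\D}\circ\p_{Q',Q'',\D}=\p_{Q',Q,\D}$ for any intermediate generic point $Q''$; and theta functions compose the same way, in the sense that if the identity $\thet_{Q,m_0}=\p_{Q'',Q,\D}(\thet_{Q'',m_0})$ holds for the pair $(Q,Q'')$ and the identity $\thet_{Q'',m_0}=\p_{Q',Q'',\D}(\thet_{Q',m_0})$ holds for $(Q'',Q')$, then applying $\p_{Q'',Q,\D}$ to the second and substituting gives the identity for $(Q,Q')$. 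Choosing a generic path from $Q'$ to $Q$ and cutting it at points between consecutive wall-crossings, it therefore suffices to treat the case in which the path from $Q'$ to $Q$ crosses exactly one wall $(\d,f_\d)$ (or one family of parallel walls, handled simultaneously as in the definition of $\p_{\gamma,\D}$), together with the trivial case in which it crosses no wall at all (where both $\p$ and the set of broken lines are manifestly unchanged).

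Next I would set up the bijection between broken lines with endpoint $Q$ and broken lines with endpoint $Q'$ in the single-wall case. Given a broken line $\gamma$ for $m_0$ with endpoint $Q$, I would extend it past $Q$ by one more straight segment, in the direction dictated by the monomial $c_\gamma z^{m_\gamma}$ on the last domain of linearity, until it reaches $Q'$ — but allowing it to bend once at the wall $\d$ according to rule (v) in the definition of a broken line. This produces, in general, several broken lines with endpoint $Q'$: one for each term of $c_\gamma z^{m_\gamma}\cdot\bigl(\text{a term of }f_\d^{\br{m_\gamma,n}}\bigr)$, plus the ``no-bend'' continuation if $\gamma$ crosses the wall transversely without picking up a wall function factor. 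Conversely every broken line with endpoint $Q'$ arises this way by truncating at the last crossing before $Q$. The key computation is then that the contribution of a broken line $\gamma$ to $\thet_{Q,m_0}$, namely $c_\gamma z^{m_\gamma}$, is sent by the wall-crossing map $\p_{Q',Q,\D}$ (viewed as acting on $z^{m_\gamma}$ via $z^{m}\mapsto z^{m}f_\d^{\br{m,\pm n_0'}}$) precisely to the sum of the contributions to $\thet_{Q',m_0}$ of all the extended broken lines obtained from $\gamma$; summing over $\gamma$ and using $\k[[\zeta]]$-linearity of $\p_{Q',Q,\D}$ (Proposition~\ref{just z's}) gives $\p_{Q',Q,\D}(\thet_{Q,m_0})=\thet_{Q',m_0}$, which after applying the inverse wall-crossing is the claimed identity.

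A few technical points need care. First, genericity: $Q$ and $Q'$ are assumed generic as points, but the straight segment joining them, or the extension of $\gamma$, may fail to be generic for $\D$; I would handle this by the standard device used throughout Section~\ref{scat sec} (as in Proposition~\ref{easier limit} and Proposition~\ref{consistent restrict}) of working modulo $\m^{k+1}$, where $\D_k$ is finite, perturbing the path to be generic in $\D_k$ without changing any finite-order data, and then taking $k\to\infty$; the theta function is a well-defined element of $z^{m_0}\k[[\zeta]]$ by \cite[Proposition~3.4]{GHKK}, so this limiting argument is legitimate. Second, the bookkeeping of the exponent: the sign $\pm n_0'$ in the wall-crossing map and the sign in the requirement $\br{m_L,n}>0$ in rule (v) must be matched up, and one checks that the direction in which $\gamma$ is extended (governed by the negative of the unfrozen part of $m_\gamma$, per rule (iii)) is exactly the direction that makes these signs consistent; this is the same compatibility already implicit in the definition of broken lines. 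Third, one must check that the number of broken lines of each order is finite so that the rearrangement of the (possibly infinite) sum is valid — again this is exactly the content of \cite[Proposition~3.4]{GHKK} and is inherited order-by-order. The main obstacle I expect is not any single step but the careful matching in the single-wall case between ``a term of the power series $f_\d^{\br{m_\gamma,n}}$'' appearing in rule (v) and the expansion of $f_\d^{\br{m_\gamma,\pm n_0'}}$ produced by the wall-crossing automorphism; getting the primitivity conventions ($n_0$ primitive in $N$ versus $n_0'$ primitive in $N^\circ$, and $n$ primitive in $N^\circ\cap N_\uf$) to line up so that the two sums literally coincide term-by-term is where the real work lies, though it is ultimately a bookkeeping exercise once the single-wall reduction is in place.
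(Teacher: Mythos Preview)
The paper does not prove Theorem~\ref{change of basepoint}; it is quoted without proof as \cite[Theorem~3.5]{GHKK}. Your overall architecture---work modulo $\m^{k+1}$, use consistency to reduce to a single wall-crossing, and in that case match broken lines so that the wall-crossing automorphism accounts exactly for the change in final monomials---is indeed the strategy of \cite{GHKK} (which in turn follows Carl--Pumperla--Siebert).

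There is, however, a real gap in your single-wall step. You propose to take a broken line $\gamma$ ending at $Q$ and ``extend it past $Q$ \ldots\ in the direction dictated by $c_\gamma z^{m_\gamma}$ \ldots\ until it reaches $Q'$,'' possibly bending once at $\d$. This does not produce a broken line ending at $Q'$: the direction of the added segment is $-(m_\gamma)_\uf$, which is fixed by the broken line, while $Q'$ is a prescribed point, and there is no reason the extended ray (or any bend of it allowed by rule~(v)) passes through $Q'$. Different broken lines carry different $m_\gamma$, so no single $Q'$ can lie on all such extensions simultaneously. The correct correspondence is a \emph{deformation}, not an extension: one moves the endpoint continuously from $Q$ toward $Q'$ and lets the entire broken line move with it (each linear piece is determined by the one following it, so translating $\gamma(0)$ propagates backward through the broken line). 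While the endpoint stays in a chamber this is a monomial-preserving bijection; as the endpoint crosses $\d$, one tracks which broken lines' final segments begin or cease to meet $\d$ and which acquire or lose a last bend there, and it is this accounting that produces the factor $f_\d^{\br{m_\gamma,\pm n_0'}}$ on the final monomial. Your closing remarks on signs, primitivity of $n_0$ versus $n_0'$, and the passage to the limit via Proposition~\ref{easier limit} are on target, but the bijection itself must be rebuilt along these lines.
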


\begin{theorem}\label{easy theta}
Suppose $F$ is a maximal cone in $\ChamberFan(B)$ and suppose $Q$ is in the interior of $F$ and $m\in M^\circ_\uf$ is in $F$.
Then $\thet_{Q,m}=z^m$.
\end{theorem}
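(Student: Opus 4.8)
The final statement to prove is Theorem~\ref{clus mon pop}: for $Q$ generic in $\C^+$ and $m_0\in M_\uf^\circ\cap\ChamberFan(B)$, we have $\thet_{Q,m_0}=\p_{Q',Q,\Scat(\tB,\s)}(z^{m_0})$, where $Q'$ lies in the interior of a maximal cone $F$ of $\ChamberFan(B)$ containing $m_0$.

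\begin{proof}[Proof proposal]
The plan is to deduce this directly by chaining together the two quoted results from \cite{GHKK}, namely Theorem~\ref{change of basepoint} (change of basepoint) and Theorem~\ref{easy theta} (theta functions are monomials inside chamber cones), applied to the cluster scattering diagram $\D=\Scat(\tB,\s)$. First I would observe that both $Q$ and $Q'$ are generic points of $V^*$: the point $Q$ is generic by hypothesis, and $Q'$ can be taken generic because the interior of the maximal cone $F$ is open and the non-generic locus (the union of the hyperplanes $n^\perp$ for $n\in N_\uf$) is a countable union of proper subspaces, hence has empty interior; so a generic $Q'$ exists in the interior of $F$. This also places $Q'$ outside $\Supp(\D)$, since $F$ is a full-dimensional cone of $\ChamberFan(B)\subseteq\ScatFan(B)$ and the support of $\D$ meets no such interior. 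Thus $\p_{Q',Q,\D}$ is well-defined.

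Next I would apply Theorem~\ref{change of basepoint} with this $\D$, the given $m_0\in M^\circ$, and the generic points $Q,Q'$, obtaining $\thet_{Q,m_0}=\p_{Q',Q,\D}(\thet_{Q',m_0})$. It then remains to identify $\thet_{Q',m_0}$ with $z^{m_0}$. For this I would invoke Theorem~\ref{easy theta}: its hypotheses require $F$ to be a maximal cone of $\ChamberFan(B)$, $Q'$ to be in the interior of $F$, and $m_0\in M^\circ_\uf$ to lie in $F$. The first two hold by construction. For the third, the hypothesis $m_0\in M_\uf^\circ\cap\ChamberFan(B)$ says precisely that $m_0\in M^\circ_\uf$ and that $m_0$ is contained in some cone of $\ChamberFan(B)$; since $\ChamberFan(B)$ is a fan and $F$ is \emph{a} maximal cone containing $m_0$ (we are free to choose $F$ to be one such, as in the statement), we have $m_0\in F$. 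Hence Theorem~\ref{easy theta} gives $\thet_{Q',m_0}=z^{m_0}$. Substituting into the change-of-basepoint formula yields $\thet_{Q,m_0}=\p_{Q',Q,\D}(z^{m_0})$, which is the claim.

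The only genuine point requiring care—and the step I expect to be the main (albeit minor) obstacle—is the bookkeeping around $\ChamberFan(B)$: one must check that a vector $m_0\in M_\uf^\circ$ lying in \emph{some} cone of $\ChamberFan(B)$ in fact lies in the closed maximal cone $F$ referenced in the statement, and that such an $F$ can be chosen so that its interior contains a generic point $Q'$ outside $\Supp(\Scat(\tB,\s))$. Both are routine: the fan property of $\ChamberFan(B)$ (it is simplicial, by the discussion following Corollary~\ref{mut clus fan}) ensures every cone is a face of a maximal one, and genericity is available by the Baire-type argument above, exactly as in the proof of Proposition~\ref{generic exists}. Everything else is a direct citation of Theorems~\ref{change of basepoint} and~\ref{easy theta}.
\end{proof}
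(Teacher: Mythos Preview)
Your proposal does not prove the stated theorem. The statement you were asked to prove is Theorem~\ref{easy theta} (that $\thet_{Q,m}=z^m$ when $Q$ is in the interior of a maximal chamber cone $F$ and $m\in M^\circ_\uf\cap F$), but your write-up explicitly targets Theorem~\ref{clus mon pop} instead, and in fact \emph{invokes} Theorem~\ref{easy theta} as a black box in the middle of the argument. So as a proof of Theorem~\ref{easy theta} the proposal is circular: you assume the very conclusion you were meant to establish.

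For the record, the paper gives no proof of Theorem~\ref{easy theta}; it is quoted as \cite[Corollary~3.9]{GHKK}. A direct argument would proceed by induction on the distance of $F$ from $\C^+$ in $\ChamberFan(B)$, using Theorem~\ref{mut thm} (or equivalently Corollary~\ref{mut scat fan}) to transport the question back to the base case $F=\C^+$, where one checks by hand that the only broken line for $m_0$ with endpoint $Q\in\Int(\C^+)$ is the straight ray $t\mapsto Q-tm_0$ carrying the monomial $z^{m_0}$ (no wall of $\Scat(\tB,\s)$ separates this ray from infinity, since every wall is orthogonal to a vector in $N^+$). If instead your intended target really was Theorem~\ref{clus mon pop}, then your argument is correct and is exactly the paper's proof: take $Q'$ generic in $\Int(F)$, apply Theorem~\ref{change of basepoint} to get $\thet_{Q,m_0}=\p_{Q',Q,\D}(\thet_{Q',m_0})$, and then use Theorem~\ref{easy theta} to identify $\thet_{Q',m_0}=z^{m_0}$.
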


\begin{proof}[Proof of Theorem~\ref{clus mon pop}]
Since $\p_{Q',Q,\Scat(\tB,\s)}$ is independent of the choice of $Q'$ within the interior of $F$, we may as well take $Q'$ generic.
Theorem~\ref{change of basepoint} says that $\thet_{Q,m_0}=\p_{Q',Q,\Scat(\tB,\s)}(\thet_{Q',m_0})$, which equals $\p_{Q',Q,\Scat(\tB,\s)}(z^{m_0})$ by Theorem~\ref{easy theta}.
\end{proof}

\section*{Acknowledgments}
Thanks to Man Wai ``Mandy'' Cheung, Greg Muller, and Salvatore Stella for helpful conversations.
Thanks also to Mark Gross and Paul Hacking for helpful answers to questions.
Finally, thanks to the MSRI for running the Hot Topics Workshop on Cluster algebras and wall-crossing, March--April, 2016.

\end{document}